\documentclass[11pt,a4paper]{article}

\usepackage[a4paper,hcentering,vcentering]{geometry}
\geometry{hscale=0.7, vscale=0.8}
\usepackage[utf8]{inputenc}
\usepackage{nameref,hyperref}
\usepackage[right]{lineno}
\usepackage{graphicx}
\usepackage{tikz-cd}
\usepackage{url}

\usepackage{amsthm,amsmath,amsfonts,mathtools}
\newtheorem{Pro}{Proposition}[section]
\newtheorem{Hyp}{Hypothesis}
\newtheorem{Lem}{Lemma}[section]

\newtheorem{Thm}{Theorem}[section]
\newtheorem{Def}{Definition}[section]
\theoremstyle{definition}

\newtheorem{Exp}{Exemple}

\theoremstyle{remark}
\newtheorem{Rmk}{Remark}

\newcommand{\C}{\ensuremath{\mathbb{C}}}

\newcommand{\R}{\ensuremath{\mathbb{R}}}
\newcommand{\Sbb}{\ensuremath{\mathbb{S}}}

\newcommand{\zb}{\ensuremath{\mathbf{z}}}
\newcommand{\Zb}{\ensuremath{\mathbf{Z}}}

\DeclareMathOperator{\RS}{\hat{\mathbb{C}}}
\DeclareMathOperator{\CP}{\mathbb{CP}^{k}}
\DeclareMathOperator{\HR2N}{H_{\mathbb{R}^{2n}}}

\DeclareMathOperator{\HCPN2}{H_{\mathbb{CP}^{n-2}}}

\DeclareMathOperator{\vz}{\mathbf{z}}
\newcommand{\norm}[1]{\lVert {#1} \rVert}

\begin{document}

\title{Choreographic Holomorphic Spheres with Application\\
    to Hamiltonian Systems of $N$-Vortex Type} 

\author{Qun WANG\\
CEREMADE, Universit{\'e} Paris Dauphine, Paris Sciences et Lettres\\
\url{wangqun@ceremade.dauphine.fr}}

\date{}

\maketitle

\subsection*{Abstract}
We study the existence of (relative) simple choreographies for a class of Hamiltonian systems describing the interaction of particles in the plane motivated mainly by the n-vortex type problem. In particular, by constructing choreographic pseudo-holomorphic spheres, we prove that there exist infinitely many non-trivial relative choreographies for the identical n-vortex problem arising from both the Euler equation and the Gross-Pitaevskii equation.


\tableofcontents 

\section{Introduction}

\subsection{Interactive System of N-Vortex Type}

\paragraph{}Let us consider a class of Hamiltonian systems in $\R^{2n}$ of the form

\begin{align*}
\dot\Zb (t) = \mathcal{J}_{\R^{2N}}\nabla \HR2N(\Zb(t)),
\quad \Zb = (z_1,z_2,...,z_n),\quad  z_i=(x_i,y_i)\in \R^2
\end{align*}
with
\begin{align}
\label{System: H1}
\HR2N(\mathbf{Z})= \sum_{i=1}^{n}\alpha_i V(|z_i|^2) + \sum_{1 \leq i< j\leq n} \beta_{ij}F(|z_i -z_j|^2). \tag{H1}
\end{align}
Here 
\begin{itemize}
\item$z_i = (x_i, y_i)$ is the position of the $i^{th}$ particle in the plane;
\item $\nabla \HR2N$ is the gradient of $\HR2N$;
\item $\mathcal{J}_{\mathbb{R}^{2n}}$ is the standard complex structure;   
\begin{align*}
\mathcal{J}_{\mathbb{R}^{2n}} = 
\begin{bmatrix}
\mathbb{J} &       &      \\  
          & \ddots &      \\
          &       & \mathbb{J}
\end{bmatrix}, \quad 
\mathbb{J} =
\begin{bmatrix}
0  & 1 \\
-1 & 0
\end{bmatrix}
\end{align*}
\item $F$ is smooth in $\mathbb{R}^{2n} \setminus \Delta$, where 
\begin{align*}
\Delta =  \bigcup_{1\leq i <j\leq n}\{\Zb \in \mathbb{R}^{2n}| z_i = z_j\}
\end{align*}
\end{itemize}
Such a system describes the motion of $n$ particles in the plane, driven by a radial potential $V$ and an interaction function $F$ depending only on the mutual distance of each pair of particles. Here $\alpha_i$ and $\beta_{ij}$ are parameters (mass, vorticity, charge...), which might vary with indices $i$ and $j$. The velocity vector field of each particle splits into two parts. The first part depends on the absolute position of the particle, and generates a uniform rotation. The second part depends on relative positions of the particles whose behaviour is thus more complicated. 

\paragraph{}In below we discuss some well known models in hydrodynamics and quantum mechanics as examples of such Hamiltonian systems. We will be primarily interested in vortex-like systems, but part of the coming study holds in larger generality. 
\begin{Exp}[The identical n-vortex problem from Euler equation]
\label{Exp: HYD}
Let 
\begin{align*}
V=0, \beta_{ij}=-\frac{1}{4\pi}, 1\leq i< j\leq n, F(\eta) = \log|\eta|^2 
\end{align*}
In this case \eqref{System: H1} becomes
\begin{align*}
\label{System: HYD}
\HR2N(\mathbf{Z}) =- \frac{1}{4\pi}\sum_{1\leq i<j \leq n}\log|z_i-z_j|^2
\tag{n-vortex Euler}
\end{align*}
The motion of ideal flow is governed by the Euler equation
\begin{align*}
\mathbf{u}_t+ \mathbf{u} \cdot \nabla\mathbf{u} = - \nabla p
\end{align*}
This equation could be transformed into a vorticity equation \cite{marchioro2012mathematical} 
\begin{align*}
\frac{D\mathbf{\omega}}{Dt} = \mathbf{\omega}\cdot \nabla \mathbf{u}
\end{align*}
where
\begin{align*}
\mathbf{\omega} = \nabla \times \mathbf{u}=(\partial_{y}z-\partial_{z}y, \partial_{z}x-\partial_{x}z,\partial_{x}y-\partial_{y}x)
\end{align*}
Helmholtz \cite{helmholtz} considered the perpendicular section of infinitely thin, straight, parallel vortex filaments with identical constant vorticity with a plane, which turns out to be the n-vortex model. Kirchhoff \cite{kirchhoff1876vorlesungen} has found the Hamiltonian structure for such a system, with the above Hamiltonian.
\end{Exp}
\begin{Exp}[The identical n-vortex problem from Gross-Pitaevskii equation]
Let 
\begin{align*}
\alpha_i = -\frac{1}{2}\mu , 1\leq i \leq n, V(\eta) =\log\frac{1}{1-|\eta|^2}, \beta_{ij}=-\frac{1}{2}\lambda , 1\leq i< j\leq n, F(\eta) = \log|\eta|^2 
\end{align*}
\label{Exp: BEC}
\begin{align*}
\label{System: BEC}
\HR2N(\mathbf{Z}) =-\frac{1}{2}(\mu\sum_{i=1}^{n} \log\frac{1}{1-|z_i|^2} + \lambda \sum_{i<j}\log|z_i-z_j|^2)
\tag{n-vortex BEC}
\end{align*}
This Hamiltonian system describes the motion of vortices in Bose Einstein condensation (BEC). It can be observed by experiments, either via a harmonical trap  \cite{fetter2009rotating} or via a hard wall container \cite{aftalion2002shape}. This system is a 2D reduction of the Gross-Pitaevskii partial differential equation concerning the ground state of a quantum system of identical bosons. 
Here the topological charge of each vortex is fixed to be $1$, $\mu>0$ is the precession of trap center, and $\lambda>0$ is the interaction strength. The case $\mu=0$ corresponds to the classical identical n-vortex problem in hydrodynamics given in example \ref{Exp: HYD}.
\end{Exp}
\begin{Exp}[The n-site problem from nonlinear Schr{\"o}dinger equation]
\label{Exp: NLS}
Let 
\begin{align*}
\alpha_i = -\frac{1}{4}, 1\leq i \leq n, V(\eta) = |\eta|^4, \beta_{ij}=-\frac{1}{2}\delta_{ij}, 1\leq i< j\leq n, F(\eta) = \log|\eta|^2 
\end{align*}
where 
\begin{align*}
\delta_{ij} = 
\begin{cases}
0 &\text{if } i - j > 1 \mod n;   \\
1 &\text{if } i- j = 1 \mod n.  
\end{cases}
\end{align*}
In this case \eqref{System: H1} becomes
\begin{align*}
\label{System: NLS}
\HR2N(\mathbf{Z}) = \frac{1}{2} \sum_{j=1}^{n} ( \frac{1}{2}|z_j|^{4}-|z_{j+1}-z_j|^2)
\tag{n-Sites NLS}
\end{align*}
with the convention that $z_{n+1}= z_1$. This Hamiltonian system describes a simplified model for a lattice of coupled harmonic oscillators. Here $z_j = z_j(t)\in \mathbb{R}^{2} \equiv \mathbb{C}$ is the complex mode amplitude of the oscillator at site $j$. This system can be seen as a standard finite difference approximation to the continuous cubic Schr{\"o}dinger equation:
\begin{align*}
i \mathbf{Z}_t + |\mathbf{Z}|^2 \mathbf{Z} + \mathbf{Z}_{xx} = 0 
\end{align*}
For more details, see \cite{chris2003discrete}.
\end{Exp}
\paragraph{}System \eqref{System: H1} is invariant under rotation, thus the entity 
\begin{align}
\label{Def: I}
I(\mathbf{Z}(t)) = \sum_{1\leq i\leq n}|z_i(t)|^2
\end{align}
which generates rotations, is a first integral; it is called the \emph{moment of inertia} and is an analogue of the angular momentum of particle mechanics (remember that in our system $x_i$ and $y_i$ are pairs of conjugate coordinates, while in particle mechanics positions are conjugate to momenta). Moreover, the Hamiltonian is autonomous, thus $\HR2N$ itself is another first integral. As a result, for $n=2$ the system is Liouville integrable. However, for $n > 2$, this Hamiltonian system is in general \textbf{not} integrable and chaotic behaviours arise, at least conjecturally. 
\paragraph{}An exception is the case when $V$ is a constant (For instance in example \ref{Exp: HYD}, one has $V\equiv 0$). In the absence of dynamics due to $V$, the system is in addition invariant under translation. As a result,  
\begin{align}
P(\mathbf{Z}(t)) = \sum_{1\leq i\leq n}\Re(z_i(t)) =\frac{1}{n}\sum_{1\leq i\leq n}x_i(t), \quad Q(\mathbf{Z}(t)) = \sum_{1\leq i\leq n}\Im(z_i(t)) =\frac{1}{n}\sum_{1\leq i\leq n} y_i(t)
\end{align}
are also first integrals. $H,I, P^2+Q^2$ are three independent first integrals in involution. Taking the n-vortex problem in hydrodynamics for example, for $n=3$ the system is integrable \cite{poincare1893theorie} while non-integrable when $n>3$ in general \cite{ziglin1980nonintegrability, koiller1989non, castilla1993four}. 

\paragraph{}
In any case one can take advantage of the symmetry of rotation of \eqref{System: H1} by carrying out the symplectic reduction. Since $I=0$ is trivial, we may assume for example that the parameter $I=1$. The reduction is summarised by this diagram:

\begin{tikzcd}
  &&&& \Sbb^1 \arrow[hookrightarrow]{d} \\
  &&&\mathbb{R}^{2n}  \arrow[hookleftarrow]{r}{I= 1} &\Sbb^{2n-1}
  \arrow [twoheadrightarrow]{d} \\
  &&&             & \CP
\end{tikzcd}
\\
where $\Sbb^{2n-1}$ is the unit sphere of $\R^{2n}$, and the quotient of $\Sbb^{2n-1}$ by rotations identifies to the complex projective space $\mathbb{CP}^{n-1}$ of complex lines in $\R^{2n} \equiv \C^n$. As is well known, the standard Hermitian product on $\C^n$ induces a Kähler metric on $\mathbb{CP}^{n-1}$, whose imaginary part defines the opposite of the standard symplectic structure of $\mathbb{CP}^{n-1}$. \\

If furthermore $V=cst$, then one can reduce one more degree of freedom by fixing the center $(P,Q)$ at $(0,0)$ and the reduced phase space becomes $\mathbb{CP}^{n-2}$, as is summarised by the diagram:

\begin{tikzcd}
&&&&                     & \mathbb{S}^1 \arrow[hookrightarrow]{d} \\
&&& \mathbb{R}^{2n}  \arrow[hookleftarrow]{r}{P=Q=0}&\mathbb{R}^{2n-2}  \arrow[hookleftarrow]{r}{I= 1} &  \mathbb{S}^{2n-3}                \arrow [twoheadrightarrow]{d}      \\
&&&&             & \mathbb{CP}^{n-2}                     \\              
\end{tikzcd}                            
\\
In both cases, the Hamiltonian descends to a reduced Hamiltonian $H$ on the quotient, thus induces a reduced Hamiltonian system
\begin{align}
\label{System: H2}
&\dot{\vz}(t) = \mathbf{X}_{H}(\vz(t)), \quad H: \mathbb{CP}^{n-1}\rightarrow \mathbb{R},\quad \vz\in \mathbb{CP}^{n-1}, \text{ when $V\neq cst$ }\notag\\
&\dot{\vz}(t) = \mathbf{X}_{H}(\vz(t)), \quad H: \mathbb{CP}^{n-2}\rightarrow \mathbb{R},\quad \vz\in \mathbb{CP}^{n-2},\text{ when $V= cst$}
\tag{H2}
\end{align}
It is these vector fields that will be the primary source of interest in our study, aimed at finding symmetric periodic orbits.

\subsection{Absolute and Relative Periodic Orbits}
\paragraph{}
In his foundational work on the $3$-body problem, Poincaré famously showed the importance of periodic solutions, which provides important information for understanding such a complicated dynamics. One possible way of finding periodic orbits is to study orbits bifurcating from known orbits. For example methods based on the Lyapunov centre theorem around relative equilibria, or based on superposition principles (i.e. substituting a number of bodies in small relative equilibrium configuration for another body) have been applied successfully to find non-equilibrium periodic solutions in the $n$-vortex problem from Euler equation \cite{borisov2004absolute,carvalho2014lyapunov,bartsch2016periodic,bartsch2017global}. Even some particular qualitative properties (for instance some discrete symmetry of these orbits) could be deduced for these solutions. Meanwhile, these perturbative methods usually provide new solutions only locally, in a neighborhood of the known solution.

Weinstein has conjectured that many conservative systems have periodic orbits \cite{hofer1992weinstein}. For the three-body problem, Poincaré even conjectured that periodic orbits are dense among bounded motions. We may well believe in the analogous conjecture for systems \eqref{System: H1}, but this currently seems totally out of reach. The case of the identical $n$-vortex problem (example~\ref{Exp: HYD} even non-identical vortices but sharing the same sign) is a particularly favorable setting, since all orbits are bounded in $\R^{2n}$ and collision-free (i.e. bounded away from the generalised diagonal $\Delta = \cup_{i\neq j} \{z_i=z_j\})$): indeed, the boundedness comes from the conservation of the moment of inertia~\eqref{Def: I}, while the absence of collisions comes from the conservation of the Hamiltonian, all of whose terms are upper bounded along a given integral curve, and thus lower bounded.

\begin{Rmk}
    Note that in our case (as for all rotation-invariant mechanical systems) there are two competing notions of periodic orbits: one for the initial system~\eqref{System: H1} in $\R^{2n}$, and one for the reduced system~\eqref{System: H2} in $\mathbb{CP}^{n-1}$. Corresponding periodic orbits are called \emph{absolute} and \emph{reduced}, respectively. The projection on $\mathbb{CP}^{n-1}$ of an absolute periodic orbit yields a reduced one. Conversely, a lift $\Zb(t)$ to $\R^{2n}$ of a reduced $T$-periodic orbit $\zb(t)$ is usually not periodic; it is called \emph{relative}. Rather, there exists an angle $\alpha^o \in \R$ (uniquely defined modulo $2\pi$) such that $\Zb(T)$ is obtained from $\zb(T)$ by the rigid rotation (acting diagonally on all vortices) of angle $\alpha^o$. A way to recover periodicity for the lifted orbit is to look at it in a rotating frame of reference. If $\alpha(t)$ is the rotation (continuous with respect to time) of the new frame, this amounts to introducing the path $\tilde\zb(t) = e^{-i\alpha(t)}\zb(t)$. Assuming that $\alpha(0) = 0$, the new path $\tilde\zb$ is periodic provided $\alpha(T) \in -\alpha^o + 2 \pi \mathbb{Q}$. The frame may rotate non-uniformly with respect to time, or even non-monotonically. For the sake of simplicity, one usually considers only frames which rotate uniformly. But even then there are countably many such frames (determined by its angle of rotation during the period of the reduced orbit). Similar argument holds for reduced system~$\mathbb{CP}^{n-2}$, except that the center of a lifted orbit might be other than the origin.
\end{Rmk}

For example in \cite{wang2018relative} periodic orbits for the $n$-vortex problem in the plane have been found. Unfortunately, it is a difficult problem to distinguish periodic orbits on a given energy level, when these are determined through implicit methods, instead of explicit constructions. Let us consider the following simple example of RPO for the BEC identical 4-vortex problem (\textbf{figure} \ref{fig:twoconfig}). In the left $(1234)$ configuration, the distances of the four vortices are, roughly speaking, of the same scale. As a result, the motion will be that the four vortices confine themselves in a relatively small cluster and chase each other therein, while the cluster as an entity rotates together around the origin $\textbf{O}$; However in the right $(123)(4)$ configuration, the $4^{th}$ vortex is relatively far away from the others, hence the behaviour will be that the three vortices form their own cluster, thus this cluster and the $4^{th}$ vortex rotate as two clusters around the origin $\textbf{O}$. Note that we can adjust the distances to make them of same energy level $H$ and of same angular momentum $I$. So, a constant issue we have is the triviality issue: one has to show  that  the periodic orbits we find, absolute or relative, are distinct from well known ones (and in particular that their reduction is not a fixed point). More precisely, we could ask the following questions:
\begin{itemize}
\item Can one find an orbit that looks like $(1234)$, instead of $(123)(4)$?
\item Further more, suppose an orbit looks like $(1234)$ has been found, can one distinguish this orbit from relative equilibria, i.e., a square configuration rotating around its center of vorticity in certain rotating frame?
\end{itemize}

\medskip The above example explains our motivation in this work: since the dynamical systems we study are in general non-integrable (this non-integrability is rarely trivial and often requires special arguments, which we will not develop here), it is hopeless to characterise orbits by quadratures and eliminations. Nevertheless, we claim that it is possible to find non-trivial periodic orbits of \eqref{System: H1} with some abstract, variational methods, and even some more specific classes of orbits, displaying a rich discrete symmetry group. 

Again, the study of n-body problem in the plane sheds some light on our problem. In \cite{poincare1896solutions}, Poincar{\'e} had understood the difficulty of minimisation for the Lagrangian action functional in a given homotopy class, due to the possibility of collisions. Since then there have been at least two perspectives to add topological constraints. These constraints serve not only as the guarantee of coercivity, but also as ways to distinguish different orbits such like the ones we see in \textbf{figure} \ref{fig:twoconfig}. More precisely, we may consider:
\begin{itemize}
\item{\textbf{Homotopic Constraint}}: it is requested that the orbit fall in a special free homotopic class \cite{gordon1977minimizing,montgomery1998n, venturelli2001caracterisation};
\item{\textbf{Symmetry Constraint}}: it is requested that the orbit be invariant under action of a special symmetry group \cite{degiovanni1987periodic,zelati1990periodic,
chenciner2000remarkable,chenciner2003action,chenciner2009unchained}.
\end{itemize}
\begin{figure}
\begin{center}
\includegraphics[width=80mm,scale=0.5]{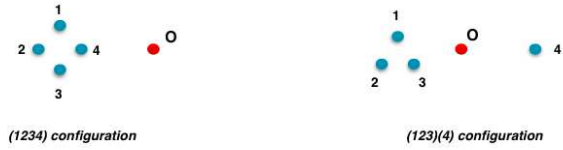}
\caption{Two configurations of same $H$ and $I$}
\label{fig:twoconfig}
\end{center}
\end{figure}
If the orbits found meet these constraints, then we will have gained qualitative understanding of them. In this article, for the sake of simplicity we will focus on a special symmetry constraint, namely the \textit{simple choreographic symmetry}, and study the existence of relative periodic orbits with such symmetry.

\section{Choreographic Holomorphic Spheres}
\subsection{Absolute and Relative Choreographies}

\subsubsection{Choreography}
\paragraph{}
We are interested in relative periodic solutions of the system \eqref{System: H1} that satisfy some symmetry condition, namely the choreographic symmetry. The study of choreographies begins with the seminal paper of Chenciner and Montgomery \cite{chenciner2000remarkable} on the proof of existence of the figure-eight solution for the 3-body problem, following the earlier numerical experiment of \cite{moore1993braids}. We denote the set of $2\pi$-periodic continuous loops by 
\begin{align*}
\Lambda = \{\mathbf{Z}\in \mathcal{C}(\mathbb{S}^1, \mathbb{R}^{2n})| \mathbf{Z}(0) = \mathbf{Z}(2\pi)\}, \quad \mathbb{S}^1 = \mathbb{R}/2\pi\mathbb{Z}.
\end{align*}
Define the time translation and the circular permutation
\begin{align}
&\tau: \mathbb{S}^1 \rightarrow\mathbb{S}^1\quad \quad\quad
 \tau(t) = \frac{2\pi}{n} +t\\
&\tilde{\sigma}: \mathbb{R}^{2n} \rightarrow \mathbb{R}^{2n} \quad  (z_1,z_2,...,z_{n-1}, z_n) \xrightarrow{\tilde{\sigma}} (z_n,z_1,...,z_{n-2},z_{n-1})
\end{align}
and 
\begin{align*}
g: \Lambda \rightarrow \Lambda\quad \quad (g \mathbf{Z})(t)= \tilde{\sigma} \mathbf{Z}(\tau^{-1}t)
\end{align*}
We are interested in the fixed points of $g$, namely free loops satisfying
\begin{align} 
z_{i+1}(t+\frac{T}{n}) = z_i(t)
\end{align} 
\begin{Def}
We call a loop $\Zb \in \Lambda$
\begin{itemize}
\item a \textbf{choreography}, if $g \Zb = \Zb$;
\item a \textbf{centred choreography}, if $\mathbf{Z}(t)$ is a choreography and 
\begin{align}
P(\mathbf{Z}(t)) = Q(\mathbf{Z}(t)) = 0, \forall t\in [0,2\pi]
\end{align}
\end{itemize}
\end{Def}

\paragraph{}
This choreographic symmetry means that particles describe the same orbit in the plane, and are merely separated by a fixed amount of time. One may define more complicated kinds of choreographies, corresponding to permutations $\tilde\sigma$ splitting into several cycles, but we will not consider such so-called multiple choreographies. We will thus have omit the adjective ``simple" in this article. 
\paragraph{}
The simplest choreography is the regular $n$-gon relative equilibrium, namely the motion along which the $n$ particles sit on the $n$ vertices of a regular $n$-gon, and rotate uniformly. A direct elementary computation shows that such solutions exist in the identical $n$-vortex problem (``Thomson configuration") or in the identical $n$-body problem (the bodies should additionally then be given the right velocities, without which the motion is homographic). The Trojan satellites in the Solar system are close to an equilateral configuration with Jupiter and the Sun.

\subsubsection{Reduced Choreography in $\mathbb{CP}^{n-1}$}
\paragraph{}
Similarly to when we weakened the notion of periodic orbit by introducing the idea of reduced or relative periodic orbits, it is natural to consider solutions which are choreographic for the reduced dynamics, in the sense which follows, and which primarily uses the existence of an action of the symmetric group on $\mathbb{CP}^{n-1}$. Denote the set of $2\pi$-parameterised continuous loops in $\mathbb{CP}^{n-1}$ by 
\begin{align*}
\Lambda_{n-1} = \{\vz\in \mathcal{C}(\mathbb{S}^{1},\mathbb{CP}^{n-1})| \vz(0) = \vz(2\pi) \}
\end{align*}
As earlier, we write $\Zb = (z_1,z_2,...,z_n) \in\mathbb{S}^{2n-1}$, and $\vz =[z_1:z_2:...:z_n]\in \mathbb{CP}^{n-1} $. The restriction of $\tilde{\sigma}$ to $\mathbb{S}^{2n-1}$ induces a natural symmetry on $\mathbb{CP}^{n-1}$. The above circular permutation $\tilde\sigma$ induces a map
\begin{align}
\label{Notion: sigma1}
\sigma_1:  &\mathbb{CP}^{n-1} \rightarrow \mathbb{CP}^{n-1},\quad [z_1:z_2:...:z_n] \xrightarrow{\sigma_1} [z_n:z_1:...:z_{n-2}:z_{n-1}],
\end{align}
letting the following diagram commute:

\[
\begin{tikzcd}
\label{Diag: Permu}
\tag{P1}
\mathbb{CP}^{n-1} \arrow{r}{{\sigma_1}} \arrow[swap]{d}{\psi} &  \mathbb{CP}^{n-1} \arrow[swap]{d}{\psi}  \\%
\mathbb{S}^{2n-1} \arrow{r}{\tilde{\sigma}}& \mathbb{S}^{2n-1}. 
\end{tikzcd}
\]
Here $\psi$ is a section of the fiber bundle $\mathbb{S}^{2n-1}\rightarrow \mathbb{CP}^{n-1}$. The diagram is well defined since it does not depend on a particular $\psi$ thus chosen.
We can then define the loop transformation
\begin{align*}
g_1: \Lambda_{n-1} \rightarrow \Lambda_{n-1}, \quad (g \vz)(t)= \sigma_1 \vz(\tau^{-1}t)
\end{align*}

\begin{Def}
We call a loop $\mathbf{z}(t) \in \Lambda_{n-1}$ a \textbf{reduced choreography}, if 
\begin{align}
g_1 \mathbf{z} = \mathbf{z},\text{ i.e. } \vz(t+\frac{2\pi}{n})=\sigma_1 \mathbf{z}(t) \quad \forall t\in [0,2\pi]
\end{align}
A reduced choreography is called \textbf{nontrivial} if it is not a constant in $\mathbb{CP}^{n-1}$.
\end{Def}

\subsubsection{Centred Reduced Choreography in $\mathbb{CP}^{n-2}$}
It is also possible to define an induced choreographic symmetry in loop space of $\mathbb{CP}^{n-2}$, with the help of Lim's coordinate transformation. Denote 
\begin{align*}
\Lambda_{n-2} = \{\mathbf{w}\in \mathcal{C}(\mathbb{S}^{1},\mathbb{CP}^{n-2})| \mathbf{w}(0) = \mathbf{w}(2\pi) \}
\end{align*}
We recall a following version of Lim's coordinate \cite{lim1989canonical}, adapted to our situation. 
\begin{Thm}[\cite{lim1989canonical}]
Let $Z=(z_1,z_2,...,z_n)\in \mathbb{C}^{n}$. There exists a linear transformation $f: \mathbb{C}^{n}\rightarrow  \mathbb{C}^{n}$
\begin{align}
W = f(Z), \quad W=(w_1,w_2,...,w_n)
\end{align}
s.t. 
\begin{enumerate}
\item $f$ is unitary, i.e., $f\in U(n)$
\item $\displaystyle w_n= \frac{1}{n}\sum_{1\leq i\leq n}z_i$
\end{enumerate}
\end{Thm}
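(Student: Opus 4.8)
The plan is to reduce the statement to a standard fact of Hermitian linear algebra: a unit vector in $\C^n$ extends to an orthonormal basis. If $f\in U(n)$ is written as a matrix acting on column vectors, then its rows $\bar r_1,\dots,\bar r_n$ (I take complex conjugates purely for bookkeeping) form an orthonormal basis of $\C^n$ for the standard Hermitian product, and $(fZ)_j=\langle Z,r_j\rangle$. So condition (2) — that $w_n=(fZ)_n$ be proportional to $\sum_i z_i$ — just prescribes the last row of $f$ to be a multiple of $(1,1,\dots,1)$. Since $(1,\dots,1)$ has Hermitian norm $\sqrt n$, the choice making that row a unit vector is $\tfrac{1}{\sqrt n}(1,\dots,1)$, giving $w_n=\tfrac{1}{\sqrt n}\sum_i z_i$. (If one wants the normalization $w_n=\tfrac1n\sum_i z_i$ literally as written, one post-composes with the rescaling of the last coordinate by $1/\sqrt n$; what matters downstream is only that $w_n$ is a fixed multiple of the barycenter of the $z_i$, so the substance is unaffected.)

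First I would fix $v_n:=\tfrac{1}{\sqrt n}(1,\dots,1)\in\C^n$, a unit vector, and invoke Gram--Schmidt to complete $\{v_n\}$ to an orthonormal basis $v_1,\dots,v_n$ of $\C^n$: for instance start from $v_n$ together with $n-1$ of the standard basis vectors, which together span $\C^n$, orthonormalize while keeping $v_n$ first, and relabel so that $v_n$ comes last. Then define $f$ to be the linear map whose matrix has rows $\bar v_1,\dots,\bar v_n$. By construction $f$ is unitary, and $w_n=(fZ)_n=\langle Z,v_n\rangle=\tfrac{1}{\sqrt n}\sum_i z_i$ (the conjugation is invisible since $v_n$ is real). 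This already proves the theorem.

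For a clean explicit representative — which is what makes the coordinate useful in practice — I would instead exhibit $f$ as a row-permutation of the discrete Fourier transform. Put $\omega=e^{2\pi i/n}$, let the rows of $f$ be $r_k=\tfrac{1}{\sqrt n}(1,\omega^{k},\omega^{2k},\dots,\omega^{(n-1)k})$ for $k=1,\dots,n-1$, and let the last row be $r_n=\tfrac{1}{\sqrt n}(1,1,\dots,1)$ (the $k=0$ row). The orthonormality $\langle r_k,r_l\rangle=\delta_{kl}$ is the elementary geometric-sum identity $\sum_{m=0}^{n-1}\omega^{m(k-l)}=n\,\delta_{kl}$, so $f\in U(n)$, and the last coordinate of $fZ$ is manifestly $\tfrac{1}{\sqrt n}\sum_i z_i$.

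There is no serious obstacle: the content is entirely elementary. The only points needing care are bookkeeping — whether $f$ is taken $\C$-linear or conjugate-linear, the placement of conjugates in $(fZ)_j=\langle Z,r_j\rangle$, the normalization constant ($1/\sqrt n$ versus $1/n$), and that $w_n$ is the $n$-th rather than the first coordinate (a single transposition of rows). It is also worth recording that $f$ is far from unique: any element of the stabilizer $U(n-1)$ of the line $\C v_n$, acting on the first $n-1$ coordinates, post-composes with $f$ to give another admissible choice, and the DFT above is merely a convenient normal form.
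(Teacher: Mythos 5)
Your proof is correct, and in fact the paper offers no proof at all of this statement --- it is recalled as a black box from Lim's paper \cite{lim1989canonical} --- so you have supplied a complete elementary argument where the text only gives a citation. Both of your constructions work: completing the unit vector $\tfrac{1}{\sqrt n}(1,\dots,1)$ to an orthonormal basis proves existence, and the discrete Fourier matrix gives an explicit representative whose orthonormality reduces to $\sum_{m=0}^{n-1}\omega^{m(k-l)}=n\delta_{kl}$. The DFT choice is moreover particularly well adapted to this paper: the cyclic shift $\tilde\sigma$ acts on the Fourier coordinates diagonally by phases, which is exactly the structure exploited later in Lemma \ref{Lem: Chore2}. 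You are also right to flag the normalization: conditions (1) and (2) are literally incompatible, since the last row of a unitary matrix must be a unit vector while $(\tfrac1n,\dots,\tfrac1n)$ has norm $\tfrac{1}{\sqrt n}$; the correct unitary normalization is $w_n=\tfrac{1}{\sqrt n}\sum_i z_i$. This is a (harmless) erratum in the statement rather than a gap in your argument --- everywhere the paper uses $f$ (the diagram \eqref{Diag: Permu2}, Lemma \ref{Lem: Sp}, Lemma \ref{Lem: Chore2}) it only needs that $f$ is unitary, hence norm-preserving and symplectic, and that the vanishing of $w_n$ is equivalent to the vanishing of the barycenter, which holds for any fixed nonzero multiple.
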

Let $i_n:\mathbb{C}^{n-1} \hookrightarrow \mathbb{C}^{n} $ be the natural embedding, s.t.
\begin{align*}
(w_1,w_2,...,w_{n-1})\xrightarrow{i_n} (w_1,w_2,...,w_{n-1},0)
\end{align*}
Given a point $w=[w_1:w_2:...:w_{n-1}] \in \mathbb{CP}^{n-2}$, we proceed as the following. We first lift it to $(w_1,w_2,...,w_{n-1})\in \mathbb{S}^{2n-3}$, next we augment it to $W= i_n((w_1,w_2,...,w_{n-1}))= (w_1,w_2,...,w_{n-1},0)\in \mathbb{S}^{2n-1}$. Since symplectic transformation is invertible, let $Z=f^{-1}(W)\in \mathbb{S}^{2n-1}$, we can now play the usual permutation $\tilde{\sigma}$ on $Z$. As a result, we define the permutation map on $\mathbb{CP}^{n-2}$ as
\begin{align}
\sigma_{2}: \mathbb{CP}^{n-2}\rightarrow \mathbb{CP}^{n-2}
\end{align}
s.t. the following diagram commute

\[ \begin{tikzcd}
\label{Diag: Permu2}
\tag{P2}
\mathbb{CP}^{n-2} \arrow{r}{\sigma_2} \arrow[swap]{d}{\psi}  &\mathbb{CP}^{n-2}  \arrow[swap]{d}{\psi} \\
\mathbb{S}^{2n-3}  \arrow[swap]{d}{i_n} &  \mathbb{S}^{2n-3} \arrow[swap]{d}{i_n}\\
\mathbb{S}^{2n-1}  \arrow[swap]{d}{f^{-1}} &  \mathbb{S}^{2n-1} \arrow[swap]{d}{f^{-1}}\\
\mathbb{S}^{2n-1} \arrow{r}{\tilde{\sigma}} &  \mathbb{S}^{2n-1} 
\end{tikzcd}
\]
Here $\psi$ is a section of the bundle $\mathbb{S}^{2n-3}\rightarrow \mathbb{CP}^{n-2}$. Since rotation and translation commute when $P(\mathbf{Z})= Q(\mathbf{Z})= 0$, it follows that the diagram does not depend on the choice of $\psi$, hence $\sigma_2$ is well-defined.
Similarly we define the loop transformation
\begin{align*}
g_2: \Lambda_{n-2} \rightarrow \Lambda_{n-2}, \quad (g_2 \mathbf{w})(t)= \sigma_2 \mathbf{w}(\tau^{-1}t)
\end{align*}
\begin{Def}
We call a loop $\mathbf{w}(t) \in \Lambda_{n-2}$ a \textbf{centred reduced choreography}, if 
\begin{align}
g_2 \mathbf{w} = \mathbf{w},\text{ i.e. } \mathbf{w} (t+\frac{2\pi}{n})=\sigma_2 \mathbf{w}(t) \quad \forall t\in [0,2\pi]
\end{align}
A reduced centred choreography is called \textbf{nontrivial} if it is not a constant in $\mathbb{CP}^{n-2}$.
\end{Def}

\subsubsection{Relative Choreography in $\mathbb{R}^{2n}$}
The reduced choreographic loops defined in the last sub-section could lift to orbits in the original phase space. If $\mathbf{z}(t)\in \Lambda_{n-1}$ is a reduced choreographic loop and let $\Zb$ be its lifting to $\Lambda$. Then there exists a rotation $g\in SO(2)$ of angle $\alpha$ s.t. $\mathbf{Z}(0) = g\tilde{\sigma}\mathbf{Z}(\frac{2\pi}{n})$. Take a any frame of reference which rotates continuously (possibly non-uniformly) by the angle $\alpha$ during a time interval of length $2\pi/n$, and then continue the rotation of the frame by making its rotation velocity $2\pi/n$-periodic. Then $Z$ is simple choreographic in this frame, thanks to the rotational invariance of the Hamiltonian and to the uniqueness of integral curves through a point. We thus define the following objects:
\begin{Def}
We call a curve $\mathbf{Z}(t) \in \mathcal{C}([0,2\pi], \mathbb{R}^{2n})$
\begin{itemize}
\item a \textbf{relative choreography}, if $\mathbf{Z}(t)$ is a lifting of a reduced choreography $\mathbf{z}(t)\in \Lambda_{n-1}$.
$\mathbf{Z}(t)$ is \textbf{non-trivial} if $\mathbf{z}(t)$ is non-trivial; 
\item a \textbf{centred relative choreography}, if $\mathbf{Z}(t)$ is a lifting of a centred reduced choreography $\mathbf{w}(t)\in \Lambda_{n-2}$.
$\mathbf{Z}(t)$ is \textbf{non-trivial} if $\mathbf{w}(t)$ is non-trivial; 
\end{itemize}
\end{Def}
From now on, to simplify the symbols and discussion, we make the following convention. Let $k\in \{n-1,n-2\}$ and $\sigma: \mathbb{CP}^{k}\rightarrow \mathbb{CP}^{k}$ s.t.
\begin{align}
\sigma \mathbf{z} =
\begin{cases}
\sigma_1 \mathbf{z},\quad \text{if $\mathbf{z}\in \mathbb{CP}^{n-1}$} \\
\sigma_2 \mathbf{z},\quad \text{if $\mathbf{z}\in \mathbb{CP}^{n-2}$}
\end{cases}
\end{align}
By considering the standard symplectic structure $\Omega =\sum_{i=1}^{n}dp_i \wedge dq_i$, we see clearly that $\tilde{\sigma}: \mathbb{R}^{2n} \rightarrow \mathbb{R}^{2n}$ is a symplectic transformation, i.e. $\tilde{\sigma}^{*}\Omega = \Omega$. Now consider the natural symplectic form $\omega$ induced on $\mathbb{CP}^k$.

\begin{Lem}
\label{Lem: Sp}    
The map $\sigma: \mathbb{CP}^{k} \rightarrow \mathbb{CP}^{k}$ is both holomorphic and symplectic. 
\end{Lem}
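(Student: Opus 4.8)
The plan is to reduce the statement, in both cases $k=n-1$ and $k=n-2$, to the classical fact that every unitary linear map of $\C^{m}$ descends to a holomorphic isometry of $\mathbb{CP}^{m-1}$ for the Fubini--Study metric, and then to invoke that fact.

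\textbf{Step 1: identify $\sigma$ as the projectivization of a unitary map.} When $k=n-1$ this is immediate: viewed on $\C^{n}$, the permutation $\tilde\sigma$ is the permutation matrix $(z_1,\dots,z_n)\mapsto(z_n,z_1,\dots,z_{n-1})$, which is unitary (it permutes identical blocks, hence commutes with $\mathcal J_{\R^{2n}}$ and preserves the Hermitian product), and by the defining diagram~\eqref{Diag: Permu} the map $\sigma=\sigma_1$ is exactly the one it induces on $\mathbb{CP}^{n-1}$. When $k=n-2$ the point is that Lim's coordinate $w_n=\tfrac{1}{n}\sum_i z_i$ vanishes exactly when $P(\Zb)=Q(\Zb)=0$, so $i_n(\C^{n-1})=\{w_n=0\}$ corresponds under $f$ to the subspace $\{P=Q=0\}$ of $\C^{n}$. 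Since $\tilde\sigma$ fixes $\sum_i z_i$, the conjugated unitary $A:=f\circ\tilde\sigma\circ f^{-1}\in U(n)$ preserves $i_n(\C^{n-1})$; being unitary it also preserves the orthogonal complement $\{0\}\times\C$, hence restricts to a unitary $\hat A\in U(n-1)$ of $\C^{n-1}\cong i_n(\C^{n-1})$. Chasing diagram~\eqref{Diag: Permu2} --- which, as noted there, is independent of the section $\psi$ --- then shows that $\sigma=\sigma_2$ is precisely the map induced by $\hat A$ on $\mathbb{CP}^{n-2}$. Thus in both cases $\sigma$ has the form $[v]\mapsto[Av]$ for some unitary $A$ on $\C^{k+1}$.

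\textbf{Step 2: holomorphy and isometry.} Let $\pi:\C^{k+1}\setminus\{0\}\to\mathbb{CP}^{k}$ be the canonical holomorphic submersion. From $\sigma\circ\pi=\pi\circ A$ and the $\C$-linearity of $A$ it follows that $\sigma\circ\pi$ is holomorphic, and composing locally with holomorphic sections of $\pi$ yields that $\sigma$ itself is holomorphic; in particular $d\sigma\circ J=J\circ d\sigma$, where $J$ is the complex structure of $\mathbb{CP}^{k}$. For the isometry part, the Fubini--Study metric $g$ on $\mathbb{CP}^{k}$ is the Kähler metric induced by the standard Hermitian product of $\C^{k+1}$ and is $U(k+1)$-invariant (seen, e.g., from its local Kähler potential $\log\norm{v}^{2}$, or from the uniqueness up to scale of the $U(k+1)$-invariant Riemannian metric), hence $\sigma^{*}g=g$. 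Finally, since $\mathbb{CP}^{k}$ is Kähler, the symplectic form satisfies $\omega=\pm\,g(J\cdot,\cdot)$ (the sign being the convention recalled in the introduction); as $\sigma$ preserves both $g$ and $J$, it preserves $\omega$, i.e.\ $\sigma^{*}\omega=\omega$. This establishes that $\sigma$ is both holomorphic and symplectic.

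I expect the only genuine obstacle to be the bookkeeping of Step 1 in the case $k=n-2$: one must follow diagram~\eqref{Diag: Permu2} carefully to confirm that the composite really descends from a unitary map of $\C^{n-1}$, the essential input being that $\tilde\sigma$ preserves the center of vorticity and therefore the subspace $\{w_n=0\}$ singled out by Lim's coordinate. Once that identification is in place, the remainder is just the standard assertion that $PU(k+1)$ acts on $\mathbb{CP}^{k}$ by Kähler automorphisms, and requires no further computation.
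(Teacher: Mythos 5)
Your proposal is correct, and it rests on the same essential input as the paper's proof -- namely that $\tilde{\sigma}$ is a (unitary, hence symplectic) linear map of $\mathbb{C}^{n}$ and that Lim's transformation $f$ is unitary -- but it packages the symplectic part differently. The paper verifies $\sigma^{*}\omega=\omega$ by hand from the defining relation $\pi^{*}\omega=i^{*}\Omega$: it lifts tangent vectors $v_1,v_2\in T_{\mathbf z}\mathbb{CP}^{n-1}$ to representatives $V_1,V_2\in T_{\mathbf Z}\mathbb{S}^{2n-1}$, chases diagram \eqref{Diag: Permu} (inserting a rotation $g\in SO(2)$ to account for the ambiguity of the lift), and uses $\tilde{\sigma}^{*}\Omega=\Omega$; the case $k=n-2$ is then dispatched with the remark that $f$ is linear and symplectic. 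You instead identify $\sigma$ outright as the projectivization of a unitary map of $\mathbb{C}^{k+1}$ and invoke the standard fact that $PU(k+1)$ acts on $\mathbb{CP}^{k}$ by Kähler automorphisms, deducing $\sigma^{*}\omega=\omega$ from $\sigma^{*}g=g$ and $d\sigma\circ J=J\circ d\sigma$. The two routes are equivalent in substance, but yours buys a cleaner conceptual statement at the price of citing $U(k+1)$-invariance of the Fubini--Study structure, while the paper's computation stays entirely inside the symplectic-reduction framework it has already set up. Your Step~1 for $k=n-2$ -- checking that $A=f\circ\tilde{\sigma}\circ f^{-1}$ preserves the subspace $\{w_n=0\}$ because $\tilde{\sigma}$ fixes $\sum_i z_i$, and hence restricts to a unitary of $\mathbb{C}^{n-1}$ -- is in fact spelled out more carefully than the corresponding step in the paper, which leaves that verification implicit.
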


\begin{proof}
We prove that $\sigma$ is holomorphic and symplectic in details for $k=n-1$. Similar argument works for $k=n-2$. \\
First, $\tilde{\sigma}$ could be seen as an invertible linear transformation of $\mathbb{C}^{n}$, hence $\sigma$ is holomorphic. Next we show that $\sigma$ is a symplectic transformation. Consider
\[     
\begin{tikzcd}
\mathbb{CP}^{n-1}  &\arrow{l}{\pi}  \mathbb{S}^{2n-1}  \arrow{r}{i}  &  \mathbb{R}^{2n} \end{tikzcd}
\]  
The symplectic form $\omega$ is defined by $\pi^{*}\omega = i^{*}\Omega$, where $\pi$ is natural projection and $i$ the natural inclusion. Now consider $v_1,v_2 \in T_z \mathbb{CP}^{n-1}$, which are equivalent classes of $T\mathbb{S}^{2n-1}$ taking quotient of the symmetry. Taking thus $\mathbf{Z}\in \mathbb{S}^{2n-1}$ s.t. $\pi(\mathbf{Z}) = \vz$ and $V_1,V_2\in T_Z\mathbb{S}^{2n-1}$ be their representatives. It follows from the defining equation of $\omega$ that there exists $g \in \mathbf{SO}(2)$ s.t.
\begin{align*}
\sigma^{*}\omega(v_1, v_2) = \omega(\sigma_{*} v_1, \sigma_{*} v_2) =\Omega(\tilde{\sigma}_{*}gV_1, \tilde{\sigma}_{*} gV_2) =\tilde{\sigma}^{*}\Omega(V_1, V_2)= \Omega(V_1, V_2) = \omega(v_1,v_2)
\end{align*}
The action of $g$ is a diagonal action, and the second equality is due the diagram \eqref{Diag: Permu} while the third equality is true because the action of g is in fact a symplectic transformation. The proof for the case $k=n-2$ is similar, by using the above argument and taking into account that the Lim transformation $f: \mathbb{C}^{n}\rightarrow \mathbb{C}^{n}$ is linear and symplectic (so is $f^{-1}$). 
\end{proof}
With the choreographic symmetry thus defined, we can move on to construct holomorphic spheres with choreographic symmetry.

\subsection{Definition of Choreographic Holomorphic Sphere}
\paragraph{}
Our aim is to find non-trivial reduced choreography which are integral curves of the system~\eqref{System: H2}. Such a loop $\zb(t)\in \mathbb{CP}^{n-1}$ (resp. $\mathbf{w}(t)\in \mathbb{CP}^{n-2}$) possesses lifts $\Zb(t)$ solving the original Hamiltonian system \eqref{System: H1}; such lifts are obtained by mere quadrature, as can be checked by switching to local coordinate systems in $\R^{2}$ which are adapted to the reduction by rotations( resp. rotations and translations). These lifted orbits $\Zb(t)$ are non-trivial relative choreographies (resp. non-trivial centred relative choreographies) of the original Hamiltonian system \eqref{System: H1}.
\paragraph{}
Searching non-constant periodic solutions on a hyper-surface is closely related to the conjecture of Weinstein. The proof of this conjecture when the underlying symplectic manifold is complex projective space been done by Hofer and Viterbo \cite{hofer1992weinstein}. They studied the Hamiltonian perturbed $J$-holomorphic spheres, which satisfies a nonlinear partial differential equation (PDE) of Cauchy-Riemann type. This PDE could be seen as a zero section of a fiber bundle. Now, our original Hamiltonian is symmetric with respect to permutation of particles, and this symmetry will be heritaged by the PDE. Our aim is to take the reduced choreographic symmetry into the construction of the fiber bundle. Once this is done, Palais' principle \cite{palais1979principle} guarantees that the PDE has a symmetric weak solution, and the elliptic regularity applies to show it is a classical solution. From that point, one can continue with the analysis given in \cite{hofer1992weinstein} and conclude the existence of a reduced choreography for the Hamiltonian system. 
To this end, we will define and study holomorphic spheres having a choreographic symmetry. For a systematic investigation of $J$-holomorphic curves, we refer to \cite{gromov1985pseudo, mcduff2012j, audin2014morse}. 
\paragraph{}
Let $\RS = \mathbb{C} \cup \{\infty\} \equiv \Sbb^2$ be the the Riemann sphere and $(\mathit{M},\omega)$ be a symplectic manifold. Let $J$ be an almost complex structure calibrated by $\omega$ ($J$ and $\omega$ are also said to be compatible), meaning that the symplectic structure twisted by $J$,
\[(x,y) \mapsto \omega(x,Jy),\]
is a Riemannian metric. A holomorphic sphere in $M$ is a smooth map $u: \RS \rightarrow M $ s.t. 
\begin{align}
    J \circ Tu = Tu \circ i
\end{align}
Now in particular let $M$ be $\mathbb{CP}^k$. This is indeed a complex manifold with standard complex structure $i_0$. We denote by $J_0$ the regular\footnote{one could turn to \cite{mcduff2012j} for the regularity of $J_0$} almost complex structure induced by $i_0$. Note that by a re-parametrisation of the augmented complex plane, a holomorphic sphere, after taking a cylinder parametrisation of $\hat{\mathbb{C}}$, can be written as a map $v(s,t) = u \circ \phi$, where $\phi(s,t) = exp(s + it), -\infty \leq s \leq \infty,0\leq t< 2\pi$. Let $(\tau u)(z) = u\circ \phi(s, t+\frac{2\pi}{n})$, where $t+ \frac{2\pi}{n}$ is to be understood as $t+\frac{2\pi}{n} \mod 2\pi$. Sometimes we also denote $\tau$ by letting
\begin{align}
\tau: & \RS \rightarrow \RS, \quad z \xrightarrow{\tau} e^{i\frac{2\pi}{n}}z
\end{align}
The somehow abused notion $\tau$ should not bring any ambiguity. It is to be understand as a translation of time for $t$ varible in our cylinder parametrisation, thus coincides with the definition before.
\begin{Def}[Choreographic holomorphic sphere]
A holomorphic sphere $u$ in $\mathbb{CP}^k$ is \textbf{choreographic} if 
    \[u \circ \tau = \sigma \circ u.\]
\end{Def}

In other words, if $u$ is a choreographic holomorphic sphere in $\mathbb{CP}^{n-1}$ (resp. $\mathbb{CP}^{n-2}$), then for each fixed $s\in \R$, $z(t):= u(s,t)$ is a reduced choreographic loop (resp. a centred reduced choreographic loop). 

\subsection{Choreographic Fiber Bundle}

\subsubsection{Base Manifold}
Next given $\alpha$ a $\omega$-minimal free homotopy class\footnote{This means that 
\begin{align*}
0<\langle \omega, \alpha \rangle  = \inf \{ \langle \omega, [u] \rangle | \text{$u$ is a nonconstant $J_0$-holomorphic sphere} \}  
\end{align*}}, we
consider the Hilbert Manifold $\mathcal{B}$
\begin{align}
\mathcal{B} =\{ u\in H^{2,2}(\hat{\mathbb{C}}, \mathbb{CP}^{k}) | [u] = \alpha, u(0) = P_{0}, u(\infty)= P_{\infty}, \int_{\norm{z}\leq 1} u^{*}\omega = \langle \omega, \alpha \rangle  \}
\end{align}

\begin{Pro}
\label{Pro: Palais}
Let $G=\langle g \rangle$ be the cyclic group generated by $g$, where $g u = (\sigma\circ \tau^{-1}) u$ and let $\mathcal{B}_G = Fix_{G}(\mathcal{B})$ be the $G$-invariant subset of $\mathcal{B}$. If $\mathcal{B}_G\neq \emptyset$, then $\mathcal{B}_G$ is itself a (totally geodesic) Hilbert sub-manifold. 
\end{Pro}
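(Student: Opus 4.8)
The plan is to recognise Proposition~\ref{Pro: Palais} as the Hilbert-manifold incarnation of the classical fact that the fixed-point set of an isometric action of a compact group on a Riemannian manifold is a totally geodesic submanifold. So I would proceed in three stages: (i) check that $G=\langle g\rangle$ is a finite group genuinely acting on $\mathcal B$; (ii) equip $\mathcal B$ with its natural $H^{2,2}$-metric and verify that $G$ acts smoothly and by isometries; (iii) use the $G$-equivariant exponential map at a fixed point to build submanifold charts for $\mathcal B_G$ and to check the totally-geodesic property. Finiteness of $G$ is immediate: $\sigma$ has order dividing $n$ (for $k=n-1$ it is literally the cyclic permutation of the homogeneous coordinates; for $k=n-2$ it is the projectivisation of $\tilde\sigma$ restricted, via the Lim transformation, to the invariant subspace $\{\sum_i z_i = 0\}$, and $\tilde\sigma^n=\mathrm{id}$), while $\tau^n=\mathrm{id}$ on $\hat{\mathbb C}$; hence $g^n=\mathrm{id}$ and $G$ is cyclic, in particular compact.

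Next I would verify that $g u = \sigma\circ u\circ\tau^{-1}$ maps $\mathcal B$ into itself, which is a chain of routine checks. Composition with the fixed smooth diffeomorphisms $\sigma$ (on the target) and $\tau^{-1}$ (on the domain) preserves the Sobolev regularity $H^{2,2}$. It preserves the homotopy class $\alpha$: $\tau^{-1}$ is isotopic to $\mathrm{id}_{\hat{\mathbb C}}$, and $\sigma$, being biholomorphic, carries $J_0$-holomorphic spheres to $J_0$-holomorphic spheres and hence fixes the $\omega$-minimal class, which is unique since $\pi_2(\mathbb{CP}^{k})\cong\Z$. The marked-point conditions $u(0)=P_0$, $u(\infty)=P_\infty$ are preserved provided $P_0,P_\infty$ are chosen in $\mathrm{Fix}(\sigma)$ (which I would build into the setup of $\mathcal B$), using that $\tau$ fixes $0$ and $\infty$. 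Finally the area normalisation is preserved because $\tau$ is a rotation fixing the closed unit disc and $\sigma^{*}\omega=\omega$ by Lemma~\ref{Lem: Sp}.

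For (ii)--(iii): the $H^{2,2}$-metric on $\mathcal B$ is assembled from the Fubini--Study metric and its Levi--Civita connection on $\mathbb{CP}^{k}$ together with the round metric on $\hat{\mathbb C}$. By Lemma~\ref{Lem: Sp} the map $\sigma$ is holomorphic and symplectic, hence an isometry of the Kähler metric $g_{FS}(\cdot,\cdot)=\omega(\cdot,J_0\cdot)$; and $\tau$ is an isometry of $\hat{\mathbb C}$. Since isometries intertwine Levi--Civita connections, pre- and post-composition act on $\mathcal B$ preserving all $H^{2,2}$ inner products, and the action is smooth because it is composition with fixed smooth maps. Now fix $p\in\mathcal B_G$ (nonempty by hypothesis): $dg_p$ generates a finite group of orthogonal operators on the Hilbert space $T_p\mathcal B$, so the averaging operator $\Pi=\tfrac{1}{|G|}\sum_k dg_p^{k}$ is an orthogonal projection onto the closed, complemented subspace $(T_p\mathcal B)^{G}$. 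The metric being smooth, $\exp_p$ is defined near $0$ and is a local diffeomorphism, and the equivariance $g\circ\exp_p=\exp_p\circ dg_p$ follows from $g$ being an isometry fixing $p$. Hence $\exp_p$ restricts to a homeomorphism from a neighbourhood of $0$ in $(T_p\mathcal B)^{G}$ onto a neighbourhood of $p$ in $\mathcal B_G$; these are submanifold charts, so $\mathcal B_G$ is a Hilbert submanifold with $T_p\mathcal B_G=(T_p\mathcal B)^{G}$. Finally, a $\mathcal B$-geodesic $\gamma$ with $\gamma(0)=p$ and $\gamma'(0)\in(T_p\mathcal B)^{G}$ is fixed by $g$ (uniqueness of geodesics, as $g\gamma$ has the same initial data), hence stays in $\mathcal B_G$ and is a $\mathcal B_G$-geodesic; thus $\mathcal B_G$ is totally geodesic.

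I expect the main obstacle to be the infinite-dimensional bookkeeping rather than any single hard idea: one must confirm that the $H^{2,2}$-metric is genuinely smooth, that the associated geodesic spray is well defined so that $\exp_p$ exists and is a local diffeomorphism, and that the $G$-action on this Sobolev manifold of maps is smooth --- the last being the delicate point in general, but tame here because the action is composition with the fixed smooth diffeomorphisms $\sigma$ and $\tau$ rather than a nonlinear evolution. The only genuinely non-formal inputs are Lemma~\ref{Lem: Sp} (used to make $\sigma$ an isometry) and the requirement that the marked points be $\sigma$-fixed, which should not be overlooked.
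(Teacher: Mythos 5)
Your proposal follows essentially the same route as the paper: Lemma~\ref{Lem: Sp} makes $\sigma$ (hence $g$) an isometry of the K\"ahler metric and therefore of the $H^{2,2}$-structure on $\mathcal{B}$, and the fixed-point set of a finite isometric group action is a totally geodesic Hilbert submanifold --- precisely the appeal to Palais' principle that the paper makes in two lines. Your write-up merely supplies the details the paper leaves implicit (finiteness of $G$, that $g$ preserves the defining conditions of $\mathcal{B}$ including the marked points and the area normalisation, and the equivariant exponential-map charts), so it is a correct, fuller version of the same argument.
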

\begin{proof}
According to lemma \ref{Lem: Sp} $\sigma$ is a symplectic transformation, and $\CP$ is a K{\"a}hler manifold, hence $g$   induces an isometry in the Hilbert manifold $\mathcal{B}$ and by applying Palais' principle we see that $\mathcal{B}_G$ is a totally geodesic Hilbert sub-manifold.
\end{proof}
\begin{Rmk}
\label{Rmk: TwoEndsSymmetry}
The normalization condition is satisfied because $\int_{\norm{z}\leq 1} u^{*}\omega = \int_{\norm{z}\leq 1} (g u)^{*}\omega$. By passing $s\rightarrow \pm \infty$ in the cylinder parametrisation, one sees from the definition of $\mathcal{B}$ that a necessary condition for $\mathcal{B}_G\neq \emptyset$ is that 
\begin{align}
\sigma P_0 = P_0, \quad \sigma P_{\infty}= P_{\infty}.
\end{align}
Later on in lemma \ref{Lem: Chore1} and \ref{Lem: Chore2} it will turns out that this somehow is sufficient too.
\end{Rmk}

We will take $\mathcal{B}_G$ as our base space and construct a fiber bundle on it in the usual way while take the choreographic symmetry into the frame.
\subsubsection{Choreographic Fiber and Section}
Let $X_{J_0}$ contains all the complex anti-linear map  $\phi: T_z \hat{\mathbb{C}} \rightarrow T_{v}\CP$, i.e. 

\[ \begin{tikzcd}
\label{Diag: D1}
T_z \hat{\mathbb{C}} \arrow{r}{-i} \arrow[swap]{d}{\phi} & T_z \hat{\mathbb{C}} \arrow[swap]{d}{\phi} \\%
T_{v}\CP \arrow{r}{J_0}& T_{v}\CP
\end{tikzcd} \tag{D1}
\]
Denote $X^{G}_{J_0}\subset X_{J_0}$ the subset that furthermore satisfies the condition
\[ \begin{tikzcd}
\label{Diag: D2}
T_z \hat{\mathbb{C}} \arrow{r}{d\tau} \arrow[swap]{d}{\phi} & T_{\tau z} \hat{\mathbb{C}} \arrow[swap]{d}{\phi} \\%
T_{v}\CP \arrow{r}{d \sigma}& T_{\sigma v}\CP
\end{tikzcd} \tag{D2}
\]
Here $d\tau$ and $d\sigma$ are the push-forward of tangent vector, and the commuted diagram (D2) is for being consistent with choreography.\\
For $\forall u\in \mathcal{B}$, consider the pull back fiber bundle induced by the graph map $\bar{u}(z) = (z, u(z))$, i.e.,
\[ \begin{tikzcd}
\bar{u}^{*}X_{J_0} \arrow{r} \arrow[swap]{d}{\pi} &X_{J_0} \arrow[swap]{d}{\pi} \\%
\hat{\mathbb{C}} \arrow{r}{z\rightarrow (z,u(z))}& \hat{\mathbb{C}}\times \CP
\end{tikzcd}
\]
Finally define the symmetric fiber bundle $\mathcal{E}  \rightarrow \mathcal{B}$
\begin{align}
\mathcal{E} = \bigcup_{u\in \mathcal{B}} \{u\}\times H^{1,2}(\bar{u}^{*}X_{J_0}) 
\end{align}
similarly define
\begin{align}
\mathcal{E}_G = \bigcup_{u\in \mathcal{B}_G} \{u\}\times H^{1,2}(\bar{u}^{*}X^{G}_{J_0}) 
\end{align}
\begin{Lem}
\label{Lem: Ju_symmetry}
$\bar{\partial}_{J_0} u = du + J_0\circ du \circ i $ is a smooth section of $\mathcal{E}_G \rightarrow \mathcal{B}_G$
\end{Lem}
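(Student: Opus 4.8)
The plan is to verify the two claims separately: first that $\bar\partial_{J_0} u$ lands in the fiber over $u$ for every $u\in\mathcal B_G$ (i.e. it defines a section), and second that this section is smooth as a map between the Hilbert manifolds $\mathcal B_G$ and $\mathcal E_G$. That $\bar\partial_{J_0}u$ is a section of the non-symmetric bundle $\mathcal E\to\mathcal B$ is already standard from the theory of $J$-holomorphic curves (see the references cited in the excerpt): for $u\in H^{2,2}$, the expression $du+J_0\circ du\circ i$ is, pointwise, a complex anti-linear map $T_z\hat{\mathbb C}\to T_{u(z)}\mathbb{CP}^k$, hence a point of $X_{J_0}$ over $\bar u(z)$, and it has Sobolev regularity $H^{1,2}$ by the standard loss-of-one-derivative estimate. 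So the only genuinely new point is to check the equivariance condition, namely that when $u\in\mathcal B_G$ the section $\bar\partial_{J_0}u$ actually takes values in the smaller fiber $H^{1,2}(\bar u^*X^G_{J_0})$, i.e. satisfies diagram \eqref{Diag: D2}.

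First I would fix $u\in\mathcal B_G$, so that $u\circ\tau=\sigma\circ u$ (equivalently $gu=u$ with $g=\sigma\circ\tau^{-1}$). Set $\phi_z := (\bar\partial_{J_0}u)_z = du_z + J_0\circ du_z\circ i$, a linear map $T_z\hat{\mathbb C}\to T_{u(z)}\mathbb{CP}^k$. I must show $\phi_{\tau z}\circ d\tau_z = d\sigma_{u(z)}\circ\phi_z$. Differentiating the identity $u\circ\tau=\sigma\circ u$ at $z$ gives $du_{\tau z}\circ d\tau_z = d\sigma_{u(z)}\circ du_z$. Now compute
\[
\phi_{\tau z}\circ d\tau_z = du_{\tau z}\circ d\tau_z + J_0\circ du_{\tau z}\circ d\tau_z\circ i,
\]
using that $\tau$ is holomorphic on $\hat{\mathbb C}$, so $d\tau_z\circ i = i\circ d\tau_z$. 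Substituting the chain-rule identity turns the right-hand side into $d\sigma_{u(z)}\circ du_z + J_0\circ d\sigma_{u(z)}\circ du_z\circ i$. By Lemma \ref{Lem: Sp} the map $\sigma$ is holomorphic for the standard complex structure, hence $J_0$-linear: $J_0\circ d\sigma = d\sigma\circ J_0$. Pulling $d\sigma_{u(z)}$ out to the left then yields $d\sigma_{u(z)}\circ(du_z + J_0\circ du_z\circ i) = d\sigma_{u(z)}\circ\phi_z$, which is exactly diagram \eqref{Diag: D2}. This shows $\bar\partial_{J_0}u\in H^{1,2}(\bar u^*X^G_{J_0})$, so $\bar\partial_{J_0}u$ is indeed a section of $\mathcal E_G\to\mathcal B_G$.

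For smoothness, the section $u\mapsto\bar\partial_{J_0}u$ is smooth as a section of the ambient bundle $\mathcal E\to\mathcal B$ — this is part of the standard Fredholm setup for pseudo-holomorphic spheres, relying on the smoothness of the $H^{2,2}\to H^{1,2}$ Sobolev multiplication and composition maps and on $J_0$ being smooth. Since $\mathcal B_G$ is a (totally geodesic) Hilbert submanifold of $\mathcal B$ by Proposition \ref{Pro: Palais}, and $\mathcal E_G$ is the restriction to $\mathcal B_G$ of the subbundle of $\mathcal E$ cut out by the fiberwise linear condition \eqref{Diag: D2}, the restriction of a smooth section of $\mathcal E$ that happens to take values in $\mathcal E_G$ is automatically a smooth section of $\mathcal E_G$. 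I expect the main obstacle to be purely bookkeeping rather than conceptual: one has to be careful that $\tau$ acts on $\hat{\mathbb C}$ genuinely holomorphically (including at $0$ and $\infty$ in the cylindrical picture), and that the identifications of fibers $\bar u^*X_{J_0}$ along the $G$-orbit of a point are the ones making \eqref{Diag: D2} a well-defined closed condition; granting those identifications, the equivariance computation above is a two-line chain-rule argument and the regularity is inherited from the non-symmetric theory.
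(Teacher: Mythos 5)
Your proposal is correct and follows essentially the same route as the paper: both reduce the statement to the known fact that $\bar\partial_{J_0}$ is a smooth section of $\mathcal{E}\to\mathcal{B}$, and then verify the equivariance condition \eqref{Diag: D2} by differentiating $u\circ\tau=\sigma\circ u$ and commuting $i$ past $d\tau$ and $J_0$ past $d\sigma$ using the holomorphicity of $\tau$ and $\sigma$ (Lemma \ref{Lem: Sp}). Your added remarks on why smoothness is inherited by the restriction to $\mathcal{B}_G$ are a harmless elaboration of what the paper leaves implicit.
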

\begin{proof}
It is well known that $\bar{\partial}_{J_0} u$ is smooth section seen as $\mathcal{E}\rightarrow \mathcal{B}$. We only need to verify that the diagram \eqref{Diag: D2} commutes when $\phi = \bar{\partial}_{J_0} u$. Actually, since $u(\tau z) = \sigma u(z)$, one sees that for $\eta \in T_z \RS$
\begin{align}
\label{E1}
d_{\tau z}u\circ d_z\tau (\eta) = d_{u(z)} \sigma \circ d_{z} u (\eta) 
\end{align}
Now since $\tau: \hat{\mathbb{C}} \rightarrow \hat{\mathbb{C}}$ and $\sigma: \CP\rightarrow \CP$ are holomorphic maps,
\begin{align}
&J_0\circ d_{\tau z}u \circ i \circ d_{z}\tau (\eta) = J_0\circ d_{\tau z}u  \circ d_{z}\tau \circ i (\eta) \label{E2}\\
&d_{u(z)} \sigma \circ J_0\circ d_{z}u \circ i (\eta) = J_0 \circ d_{u(z)} \sigma\circ d_{z}u \circ i (\eta) \label{E3}
\end{align}
Putting \eqref{E1} into right hand side of \eqref{E2} and \eqref{E3}, one sees that $d\sigma \circ \bar{\partial}_{J_0} u = \bar{\partial}_{J_0} u \circ d\tau$.
\end{proof}
This lemma justifies in particular that the zero section corresponds to the class of choreographic holomorphic spheres in our setting.\\
\subsection{Choreographic Hamiltonian Perturbation}
\subsubsection{Invariant Hamiltonian under Choreographic Symmetry} 
Having defined the action of $\sigma:\CP \rightarrow \CP$, in this sub-section, we first show that if the Hamiltonian is of the form \eqref{System: H1}, then the reduced Hamiltonian system is invariant under relative choreographic symmetry. 
\begin{Lem}
\label{Lem: In}
If $\HR2N$ is invariant under $\tilde{\sigma}$, then $H$ is invariant under $\sigma$.
\end{Lem}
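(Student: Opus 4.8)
The plan is to reduce the statement to the commutativity of $\sigma$ (resp. $\sigma_1$, $\sigma_2$) with the projection maps $\psi$ and $\pi$ used to define $H$ from $\HR2N$. Recall that the reduced Hamiltonian is characterised by $H \circ \pi = \HR2N \circ i$ on $\Sbb^{2n-1}$ (in the $\CP^{n-1}$ case), where $\pi$ is the Hopf projection and $i$ the inclusion; in the $\CP^{n-2}$ case one additionally composes with $i_n$ and $f^{-1}$ as in diagram \eqref{Diag: Permu2}. The first step is therefore to unwind these definitions and write $H([Z]) = \HR2N(Z)$ for any unit-norm representative $Z$ (using that $\HR2N$ restricted to $\Sbb^{2n-1}$ is rotation-invariant, so the value does not depend on the chosen representative); this is exactly the content already invoked in the proof of Lemma~\ref{Lem: Sp}.

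Next I would compute directly: for $\vz = [z_1:\dots:z_n] \in \CP^{n-1}$, pick a unit representative $Z$, so that $\sigma_1 \vz = [\tilde\sigma Z]$ and $\tilde\sigma Z$ is again of unit norm since $\tilde\sigma$ is a coordinate permutation, hence unitary. Then
\[
H(\sigma_1 \vz) = H([\tilde\sigma Z]) = \HR2N(\tilde\sigma Z) = \HR2N(Z) = H(\vz),
\]
where the third equality is precisely the hypothesis that $\HR2N$ is invariant under $\tilde\sigma$. This settles the case $k = n-1$. For $k = n-2$ the argument is the same, but one must also use that the Lim transformation $f$ and its inverse are unitary (Theorem of Lim quoted above), so that augmenting by a zero last coordinate via $i_n$, applying $f^{-1}$, permuting by $\tilde\sigma$, and projecting back down all preserve norms and are compatible with the rotation action; the hypothesis on $\HR2N$ again supplies the one nontrivial equality. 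One should also note that invariance of $\HR2N$ under $\tilde\sigma$ forces $\HR2N$ to be invariant under the diagonal rotation action composed with $\tilde\sigma$, which is what makes diagram \eqref{Diag: Permu2} (and hence $\sigma_2$) well-defined on the level of $H$; this was already observed when $\sigma_2$ was introduced.

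The only mild subtlety — and the step I would be most careful about — is the well-definedness bookkeeping: $\sigma_1$ and $\sigma_2$ are defined via a choice of section $\psi$, and one has to be sure that the equality $H \circ \sigma = H \circ \pi \circ \psi \circ \sigma$ does not secretly depend on that choice. But this is already guaranteed by the fact that diagrams \eqref{Diag: Permu} and \eqref{Diag: Permu2} are independent of $\psi$ (as stated when those diagrams were introduced), together with rotation-invariance of $\HR2N|_{\Sbb^{2n-1}}$. So there is no real obstacle; the lemma is essentially a diagram chase combined with the single hypothesis, and I would present it as such, treating the $n-1$ and $n-2$ cases in parallel and pointing to Lemma~\ref{Lem: Sp} and the Lim theorem for the two facts (unitarity of $\tilde\sigma$ and of $f$) that make the chase go through.
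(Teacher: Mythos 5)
Your proposal is correct and follows essentially the same route as the paper: the paper's proof likewise picks a representative $\mathbf{Z}\in\Sbb^{2n-1}$ via the commuting diagram \eqref{Diag: Permu} and writes the chain $H(\vz)=\HR2N(\mathbf{Z})=\HR2N(\tilde{\sigma}\mathbf{Z})=H(\sigma\vz)$, dispatching the $k=n-2$ case with the same remark about the Lim transformation. Your additional care about independence of the section $\psi$ and unitarity of $f$ is sound but only makes explicit what the paper leaves implicit.
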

\begin{proof}
First suppose that $k= n-1$. According to the diagram \eqref{Diag: Permu}, $\exists \mathbf{Z}\in \mathbb{S}^{2n-1} $ s.t.
\begin{align}
H(\vz) = \HR2N(\mathbf{Z}) = \HR2N(\tilde{\sigma}\mathbf{Z}) = H(\sigma\mathbf{z})
\end{align}
Similar argument works for the case $k=n-2$.
\end{proof}
Now since both the reduced Hamiltonian and the symplectic form on $\CP$ are invariant under the action of $\sigma$, we have proved actually that
\begin{Pro}
\label{Pro: Invariant}
Suppose that $\HR2N: \mathbb{R}^{2n} \rightarrow \mathbb{R}$ is a $\tilde{\sigma}$-invariant Hamiltonian, i.e., 
$\HR2N(\tilde{\sigma} \mathbf{Z}) = \HR2N( \mathbf{Z}),\forall \mathbf{Z}\in \mathbb{R}^{2n}$.
Then the flow of reduced Hamiltonian $\phi_H(t)$ on $\CP$ is $\sigma$-invariant, i.e.,
\begin{align*}
\phi^{t}_H(\sigma \mathbf{z}) = \sigma \phi^{t}_H (\mathbf{z}), \forall \mathbf{z}\in \CP
\end{align*}
\end{Pro}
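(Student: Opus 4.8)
The plan is to deduce this from the two preceding lemmas together with the elementary fact that a symplectomorphism preserving a Hamiltonian function intertwines the corresponding Hamiltonian flow. Concretely, let $\mathbf{X}_H$ denote the Hamiltonian vector field on $\CP$ defined by $\iota_{\mathbf{X}_H}\omega = dH$. Lemma \ref{Lem: In} gives $H\circ\sigma = H$, and Lemma \ref{Lem: Sp} gives $\sigma^{*}\omega = \omega$ (and that $\sigma$ is a diffeomorphism of $\CP$).

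First I would show that $\sigma$ relates $\mathbf{X}_H$ to itself, i.e. $d\sigma\circ \mathbf{X}_H = \mathbf{X}_H\circ\sigma$. Pulling back the defining identity of $\mathbf{X}_H$ by the diffeomorphism $\sigma$ and using the naturality of the interior product, one has $\sigma^{*}(\iota_{\mathbf{X}_H}\omega) = \iota_{(\sigma^{-1})_{*}\mathbf{X}_H}(\sigma^{*}\omega) = \iota_{(\sigma^{-1})_{*}\mathbf{X}_H}\omega$, while $\sigma^{*}(dH) = d(H\circ\sigma) = dH$ by Lemma \ref{Lem: In}; since $\omega$ is nondegenerate this forces $(\sigma^{-1})_{*}\mathbf{X}_H = \mathbf{X}_H$, which is exactly the claimed intertwining relation $\mathbf{X}_H\circ\sigma = d\sigma\circ\mathbf{X}_H$.

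Next, given $\mathbf{z} \in \CP$, I would consider the curve $t\mapsto \sigma(\phi^{t}_H(\mathbf{z}))$. Differentiating and using the previous step, $\frac{d}{dt}\sigma(\phi^{t}_H(\mathbf{z})) = d\sigma\big(\mathbf{X}_H(\phi^{t}_H(\mathbf{z}))\big) = \mathbf{X}_H\big(\sigma(\phi^{t}_H(\mathbf{z}))\big)$, so this curve is an integral curve of $\mathbf{X}_H$ with initial value $\sigma(\mathbf{z})$. Since $\CP$ is compact the flow $\phi^{t}_H$ is complete, and by uniqueness of integral curves of a smooth vector field we conclude $\sigma(\phi^{t}_H(\mathbf{z})) = \phi^{t}_H(\sigma(\mathbf{z}))$ for all $t$, which is the assertion.

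There is essentially no serious obstacle here: the content is entirely carried by Lemmas \ref{Lem: In} and \ref{Lem: Sp}, and the remaining argument is the textbook observation that a symmetry of the triple $(\CP,\omega,H)$ is a symmetry of its flow. The only points meriting a word of care are the sign convention in the definition of $\mathbf{X}_H$ (immaterial to the conclusion, since $-dH$ is $\sigma$-invariant as well) and the completeness of the flow, which holds because $\CP$ is closed.
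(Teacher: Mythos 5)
Your argument is correct and is exactly the route the paper takes: the paper's proof is the one-line statement that the proposition is a ``direct consequence of Lemma \ref{Lem: In} and Lemma \ref{Lem: Sp},'' and your write-up simply spells out the standard intertwining computation that this one-liner leaves implicit. No discrepancy to report.
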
 
\begin{proof}
Direct consequence of lemma \ref{Lem: In} and lemma \ref{Lem: Sp}. 
\end{proof}

Now let $H: \CP \rightarrow \mathbb{R}$ be a smooth map satisfying 
\begin{Hyp}
\label{Hyp: Hamiltonian}
\begin{align*}
& H(\sigma z) =  H(z), \forall z \in \CP \\
& H|_{\mathcal{U}(P_{0})} = h_0 \in \mathbb{R}, H|_{ \mathcal{U}(P_{\infty})} = h_{\infty} \in \mathbb{R}\\
& h_0 < h_{\infty}, \quad h_0 \leq H \leq h_{\infty} 
\end{align*}
where $\mathcal{U}(P_{0})$ and $\mathcal{U}(P_{\infty}) $ are $\sigma$-invariant open neighbourhood of $P_{0}$ and $P_{\infty}$, respectively. 
\end{Hyp}
\begin{Rmk}
$\mathcal{U}(P_{0})$ and $\mathcal{U}(P_{\infty})$ can be assumed to be $\sigma$-invariant because $H$ is $\sigma$-invariant.
\end{Rmk}

We define $\bar{h}(z,v) :=\phi$ be the unique complex antilinear map 
\begin{align*}
\phi: T_z \hat{\mathbb{C}} \rightarrow T_v \CP, \phi(z) =\begin{cases}
0, z\in \{0,\infty\}\\
\frac{1}{2\pi}H'(v)
\end{cases}
\end{align*}
Let $h(u)(z) = \bar{h}(z,u(z))$. The following lemma shows that, if in particular $u\in \mathcal{B}_G$, then $h(u)(z)$ will respect the choreographic symmetry
\begin{Lem}
\label{Lem: H_symmetry}
Under hypothesis \ref{Hyp: Hamiltonian},  $h(u)$ is a section from $\mathcal{B}_G$ to $\mathcal{E}_G$.
\end{Lem}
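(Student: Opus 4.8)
The plan is to follow the two-step pattern of the proof of Lemma~\ref{Lem: Ju_symmetry}: first argue that $u\mapsto h(u)$ is a smooth section of the unrestricted bundle $\mathcal{E}\to\mathcal{B}$, and then check that over $\mathcal{B}_G$ it takes values in the sub-bundle $\mathcal{E}_G$. For the first step I would appeal to the standard Hofer--Viterbo setup, in which the Hamiltonian perturbation term is already known to be a smooth section; the only point that genuinely uses Hypothesis~\ref{Hyp: Hamiltonian} concerns the behaviour at $z=0,\infty$. There, since $u(0)=P_0$, $u(\infty)=P_\infty$ and $H$ is constant on the neighbourhoods $\mathcal{U}(P_0),\mathcal{U}(P_\infty)$, the differential $H'(u(z))$ vanishes on an entire neighbourhood of $\{0,\infty\}$, so $\bar{h}(z,u(z))$ agrees there with $\tfrac1{2\pi}H'(u(z))$ (both equal $0$) and the piecewise definition of $\bar{h}$ glues smoothly. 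Hence $h(u)\in H^{1,2}(\bar{u}^{*}X_{J_0})$ with smooth dependence on $u$.

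The substance is the second step, mirroring Lemma~\ref{Lem: Ju_symmetry}: for $u\in\mathcal{B}_G$ one must show that $\phi:=h(u)$ makes diagram~\eqref{Diag: D2} commute, i.e. $d\sigma\circ h(u)=h(u)\circ d\tau$, equivalently $d_{u(z)}\sigma\circ\bar{h}(z,u(z))=\bar{h}(\tau z,u(\tau z))\circ d_z\tau$ for all $z$. I would assemble the same ingredients used there, now together with Hypothesis~\ref{Hyp: Hamiltonian}: (i) $u\in\mathcal{B}_G$ means $u(\tau z)=\sigma(u(z))$; (ii) by Lemma~\ref{Lem: Sp}, $\sigma$ is a holomorphic isometry of the K\"ahler manifold $\CP$, so $d\sigma$ is complex linear and preserves the metric $g_{J_0}(\cdot,\cdot)=\omega(\cdot,J_0\cdot)$; combined with $H\circ\sigma=H$ this yields $d_v\sigma\big(\nabla H(v)\big)=\nabla H(\sigma v)$; (iii) $\tau$ is a holomorphic isometry of $\hat{\mathbb{C}}$, namely the translation $(s,t)\mapsto(s,t+\tfrac{2\pi}{n})$ in the cylinder coordinate, so $d\tau$ is complex linear and carries $\partial_s,\partial_t$ to $\partial_s,\partial_t$. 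Since, by the defining formula of $\bar{h}$, the map $\bar{h}(z,v)$ depends on $(z,v)$ only through the metric dual $\nabla H(v)$ of $H'(v)$, the complex structure $J_0$, and the flat cylinder coordinate at $z$, the equivariance follows formally by transporting along $(d\tau,d\sigma)$ and using (i)--(iii):
\[
d_{u(z)}\sigma\circ\bar{h}(z,u(z))=\bar{h}\big(\tau z,\sigma u(z)\big)\circ d_z\tau=\bar{h}\big(\tau z,u(\tau z)\big)\circ d_z\tau,
\]
which is exactly \eqref{Diag: D2}; complex antilinearity of the transported map is automatic since $d\sigma$ and $d\tau$ are complex linear. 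Pointwise this places $h(u)(z)$ in $X^{G}_{J_0}$; since the conditions \eqref{Diag: D1}--\eqref{Diag: D2} are closed, they pass to the $H^{1,2}$-completion, and smoothness of $u\mapsto h(u)$ on $\mathcal{B}_G$ is inherited from the unrestricted statement. Therefore $h$ is a section $\mathcal{E}_G\to\mathcal{B}_G$.

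I expect the only delicate point to be the bookkeeping in the displayed equivariance identity: one must make explicit that the $z$-dependence of $\bar{h}(z,v)$ enters solely through the flat cylinder coordinate (so that the rotation $\tau$, being a mere translation in $(s,t)$, acts trivially on it), and that the covector-to-vector identification $H'(v)\leftrightarrow\nabla H(v)$ built into $\bar{h}$ is the K\"ahler one, so that Lemma~\ref{Lem: Sp} genuinely forces $d\sigma$ to intertwine the gradients of $H$. Once these are pinned down, the verification is, as in Lemma~\ref{Lem: Ju_symmetry}, a short diagram chase, and the smoothness and regularity claims are routine.
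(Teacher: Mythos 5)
Your proposal is correct and follows essentially the same route as the paper: the paper also reduces the check to the commutativity of diagram~\eqref{Diag: D2}, verifies it on the distinguished tangent vector $z\in T_z\RS$ via $\phi_{\tau z}(\tau z)=\tfrac{1}{2\pi}\nabla H(\sigma u(z))=d\sigma(\phi_z(z))$ (using $\sigma$-invariance of $H$ together with Lemma~\ref{Lem: Sp}), and extends to all of $T_z\RS$ by complex antilinearity — exactly the two ``delicate points'' you flag. Your treatment of the smoothness and of the vanishing near $0,\infty$ is in fact more explicit than the paper's, which dismisses that part with ``clearly $h(u)$ is in $\mathcal{E}$''.
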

\begin{proof}
Clearly $h(u)$ is in $\mathcal{E}$. Now for $z\neq 0$, let $\eta\in T_z\RS$, then there exists a unique $\lambda\in \mathbb{C}$ s.t. $\eta = \lambda z$. Since $u$ is a choreographic holomorphic sphere and that $H(u) = H(\sigma u)$, we see that
\begin{align*}
&\phi_{\tau z}(\tau (\eta)) =\phi_{\tau z}(\tau (\lambda z)) = \phi_{\tau z}((\lambda\tau  z)) = \bar{\lambda}\phi_{\tau z}(\tau z) = \bar{\lambda}d{\sigma}( \phi_{z}(z) ) = d{\sigma}( \bar{\lambda}\phi_{z}(z) ) = d{\sigma}( \phi_{ z}( \lambda z) )
\end{align*}
where the fourth equality is due to proposition \ref{Pro: Invariant}, i.e., 
\begin{align*}
\phi_{\tau z}(\tau z) = \frac{1}{2\pi}\nabla H(u(\tau z)) = \frac{1}{2\pi}\nabla H(\sigma u(z)) = \frac{1}{2\pi}d\sigma\nabla H(u(z)) = d\sigma(\phi_{z}(z) )
\end{align*}
In other words, we have verified that if $u\in \mathcal{B}_G$ then $h(u)\in \mathcal{E}_G$.
\end{proof}

Our aim is to study the parameter depending family of smooth sections defined by 
\begin{align*}
f_{\lambda}(u) = \bar{\partial}_{J_0} u + \lambda h(u)
\end{align*}
\begin{Pro}
Under hypothesis \ref{Hyp: Hamiltonian}, $f_{\lambda}(u)$ is a section of $\mathcal{E}_{G} \rightarrow \mathcal{B}_G$
\end{Pro}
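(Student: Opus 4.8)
The plan is to combine the two structural facts already established: Lemma~\ref{Lem: Ju_symmetry} shows that $\bar{\partial}_{J_0}$ is a section of $\mathcal{E}_G \to \mathcal{B}_G$, and Lemma~\ref{Lem: H_symmetry} shows that, under Hypothesis~\ref{Hyp: Hamiltonian}, the perturbation term $h(u)$ is a section of $\mathcal{E}_G \to \mathcal{B}_G$. Since $f_\lambda(u) = \bar{\partial}_{J_0} u + \lambda\, h(u)$ is a fibrewise linear combination of two sections of the same vector bundle $\mathcal{E}_G \to \mathcal{B}_G$, the result should follow formally from the vector-space structure on the fibres of $\mathcal{E}_G$. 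So the proof is essentially a two-line assembly.

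The steps I would carry out, in order: First, recall that for each $u \in \mathcal{B}_G$ the fibre of $\mathcal{E}_G$ over $u$ is the Hilbert space $H^{1,2}(\bar{u}^*X^G_{J_0})$, which is a \emph{linear} subspace of $H^{1,2}(\bar{u}^*X_{J_0})$; indeed $X^G_{J_0}\subset X_{J_0}$ is cut out of the complex–antilinear maps by the linear condition (D2) (the map $\phi\mapsto d\sigma\circ\phi - \phi\circ d\tau$ is linear in $\phi$, so its kernel is a sub-bundle). Second, invoke Lemma~\ref{Lem: Ju_symmetry} to get $\bar{\partial}_{J_0} u \in H^{1,2}(\bar{u}^*X^G_{J_0})$ and Lemma~\ref{Lem: H_symmetry} to get $h(u) \in H^{1,2}(\bar{u}^*X^G_{J_0})$, both over the same base point $u \in \mathcal{B}_G$. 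Third, conclude that $f_\lambda(u) = \bar{\partial}_{J_0} u + \lambda h(u)$ lies in the same fibre, hence $f_\lambda$ maps $\mathcal{B}_G$ into $\mathcal{E}_G$ covering the identity. Fourth, for smoothness of the section: $u \mapsto \bar{\partial}_{J_0} u$ is a smooth section of $\mathcal{E}\to\mathcal{B}$ (standard, as recalled in Lemma~\ref{Lem: Ju_symmetry}) and $u\mapsto h(u)$ is smooth because $\bar h$ depends smoothly on $(z,v)$ and post-composition/substitution by $H'$ is a smooth operation on the relevant Sobolev spaces (this is the usual regularity of Hamiltonian perturbation terms as in \cite{hofer1992weinstein}); restricting smooth sections of $\mathcal{E}\to\mathcal{B}$ to the totally geodesic Hilbert submanifold $\mathcal{B}_G$ (Proposition~\ref{Pro: Palais}) preserves smoothness, and a finite linear combination of smooth sections is smooth.

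The only point requiring any care — and the one I would flag as the main obstacle, though it is mild — is verifying that $X^G_{J_0}$ is genuinely a sub-\emph{bundle} (with linear fibres varying smoothly) rather than just a subset, so that $H^{1,2}(\bar{u}^*X^G_{J_0})$ is a closed linear subspace and the ambient bundle structure of $\mathcal{E}_G \to \mathcal{B}_G$ is well defined; this amounts to checking that condition (D2) is a smooth linear constraint of constant rank along the fibres, which follows from $\tau$ and $\sigma$ being smooth diffeomorphisms (biholomorphisms, in fact) so that $\phi\mapsto d\sigma\circ\phi\circ d\tau^{-1}$ is a smooth bundle automorphism of $X_{J_0}$ covering $(\tau,\sigma)$ and $X^G_{J_0}$ is its fixed-point set. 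Granting this (which is implicit in the construction of $\mathcal{E}_G$ already used in the preceding lemmas), the proposition is immediate. I would therefore write the proof as: ``By Lemma~\ref{Lem: Ju_symmetry} and Lemma~\ref{Lem: H_symmetry}, for every $u\in\mathcal{B}_G$ both $\bar{\partial}_{J_0}u$ and $h(u)$ lie in the linear fibre $H^{1,2}(\bar{u}^*X^G_{J_0})$ of $\mathcal{E}_G$ over $u$; hence so does their linear combination $f_\lambda(u)$. Smoothness of $f_\lambda$ follows from that of its two summands together with Proposition~\ref{Pro: Palais}.''
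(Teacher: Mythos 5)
Your proposal is correct and follows exactly the paper's own argument, which simply cites Lemma~\ref{Lem: Ju_symmetry} and Lemma~\ref{Lem: H_symmetry} and takes the linear combination; your additional remarks on the linearity of the fibres of $\mathcal{E}_G$ and on smoothness merely make explicit what the paper leaves implicit. No gap.
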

\begin{proof}
Direct consequence of lemma \ref{Lem: Ju_symmetry} and lemma \ref{Lem: H_symmetry}.
\end{proof}

\begin{Rmk}
Note that in general, for $u\notin \mathcal{B}_G$ or $H$ that is not $\sigma$-invariant (hence $h(u)$ is no longer a section from $\mathcal{B}_G$ to $\mathcal{E}_G$.) $f_{\lambda}(u)$ can still be seen a section $\mathcal{E}\rightarrow \mathcal{B}$. 
\end{Rmk}
We define moreover the sets of pairs
\begin{align}
&\mathcal{C} = \{ (\lambda,u) \in \mathbb{R}^{+} \times \mathcal{B}\text{ }|\text{ } f_{\lambda}(u) = 0\}\label{Cset} \\
&\mathcal{C}_G = \{ (\lambda,u) \in \mathbb{R}^{+} \times \mathcal{B}_{G}\text{ }|\text{ } f_{\lambda}(u) = 0\} \label{CGset}
\end{align}
We will also denote by $ \mathcal{C}(\lambda)$ a slice of $\mathcal{C}$, and $ \mathcal{C}_G(\lambda)$ a slice of $\mathcal{C}_G$, i.e.
\begin{align}
\mathcal{C}(\lambda) = \{u \in\mathcal{B}\text{ }|\text{ }
 f_{\lambda}(u) = 0\}\quad \mathcal{C}_G(\lambda) = \{u \in\mathcal{B}_{G}\text{ }|\text{ }
 f_{\lambda}(u) = 0\}
\end{align}
In particular, $\mathcal{C}(0)$ is the set of normalised holomorphic spheres of homotopy class $\alpha$ with two ends in $P_{0}$ and $P_{\infty}$. We show next that when $P_{0}$ and $P_{\infty}$ are chosen to be two special points, one has $\mathcal{C}(0)$ = $\mathcal{C}_G(0)$.
\\
\subsection{Well Posedness of Choreographic Holomorphic Sphere}
So far we have constructed $\mathcal{B}_G$, $\mathcal{C}_G$ in an abstract manner, yet we have not answered some essential questions. For example, are there non-empty choreographic holomorphic spheres, i.e., whether $\mathcal{C}_G(0)$ is not empty? In this sub-section we distinguish the two cases when $k = n-1$ and $k=n-2$ relatively and we justify the well posedness of these notions by explicit calculation. It has already been mentioned in remark \ref{Rmk: TwoEndsSymmetry} that the two ends must be carefully chosen. It turns out that this is actually enough.
\subsubsection{Special configurations in $\mathbb{CP}^{n-1}$}
Let us consider two configurations in $\mathbb{CP}^{n-1}$, denoted by $A$ and $B$ respectively, such that 
\begin{align}
& A = [1:1:1:,...,1:1] \tag{total collision}\\
& B = [e^{i \frac{2\pi}{n}  }: e^{i \frac{4\pi}{n}  }:e^{i \frac{6\pi}{n}}:...:e^{i \frac{2\pi}{n}(n-1)  } :1 ] \tag{n-polygon}
\end{align}
We call A the \textbf{total collision configuration}, and B the \textbf{n-polygon configuration}. Note that they are both $\sigma$-invariant. Assume that $P_{0} = B$ and $P_{\infty}=A$,
\begin{Lem}
\label{Lem: Chore1}
All the simple holomorphic sphere $u: \hat{\mathbb{C}} \rightarrow \mathbb{CP}^{n-1}$ s.t.   $u(0) = B$ and $u(\infty) = A$ are choreographic holomorphic sphere. 
\end{Lem}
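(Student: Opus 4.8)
The plan is to exploit the rigidity of the $\omega$-minimal class. The spheres to which the lemma applies represent the minimal free homotopy class $\alpha$; in $\mathbb{CP}^{n-1}$ this is the class of a projective line, which admits no multiply covered representative, so every nonconstant $J_0$-holomorphic sphere in class $\alpha$ is automatically simple and, being a degree-one rational curve, is a linearly parametrised line. Concretely, in the affine chart $z\in\mathbb{C}$ of $\hat{\mathbb{C}}$ such a $u$ has the form
\[
u(z) = [\,\mathbf{a} + z\,\mathbf{b}\,], \qquad \mathbf{a},\mathbf{b}\in\mathbb{C}^n \text{ linearly independent},
\]
with $u(0) = [\mathbf{a}]$ and $u(\infty) = [\mathbf{b}]$. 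First I would record this normal form (smoothness of $u$ coming from elliptic regularity on $\mathcal{B}$, and the algebraic description of degree-one maps $\hat{\mathbb{C}}\to\mathbb{CP}^{n-1}$ being classical).

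Next I would feed in the two endpoint conditions. From $u(\infty) = A = [1:\cdots:1]$ we get $\mathbf{b} = \nu\,\mathbf{1}$ with $\mathbf{1} := (1,\dots,1)$, $\nu\in\mathbb{C}^\ast$; from $u(0) = B$ we get $\mathbf{a} = \lambda\,\mathbf{w}$, where $\mathbf{w} := (\zeta,\zeta^2,\dots,\zeta^{n-1},1)$ with $\zeta := e^{2\pi i/n}$ and $\lambda\in\mathbb{C}^\ast$ — equivalently $w_j = \zeta^{j}$ for all $j$ read modulo $n$. Rescaling homogeneous coordinates by $\lambda^{-1}$, every such $u$ is therefore
\[
u(z) = [\,\mathbf{w} + \mu z\,\mathbf{1}\,], \qquad \mu := \nu/\lambda \in \mathbb{C}^\ast,
\]
and $\mathbf{w},\mathbf{1}$ are linearly independent for $n\ge 2$, so this is a genuine nonconstant sphere. (The normalisation $\int_{\norm{z}\le 1}u^{*}\omega = \langle\omega,\alpha\rangle$ only pins down $|\mu|$ and is irrelevant to the choreography identity.)

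Finally I would check $u\circ\tau = \sigma\circ u$ directly, using that $\sigma = \sigma_1$ acts linearly on $\mathbb{C}^n$ with $\sigma_1(\mathbf{1}) = \mathbf{1}$ (the total-collision vector is fixed) and $\sigma_1(\mathbf{w}) = \zeta^{-1}\mathbf{w}$ (the $n$-polygon vector is an eigenvector of the cyclic shift, with eigenvalue $\zeta^{-1}$). Then
\[
\sigma\bigl(u(z)\bigr) = [\,\sigma_1\mathbf{w} + \mu z\,\sigma_1\mathbf{1}\,] = [\,\zeta^{-1}\mathbf{w} + \mu z\,\mathbf{1}\,] = [\,\mathbf{w} + \zeta\mu z\,\mathbf{1}\,] = u(\zeta z) = (u\circ\tau)(z),
\]
which is exactly the defining relation of a choreographic holomorphic sphere. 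The crux — indeed the only non-formal step — is the first one, identifying the minimal class with lines so that $u$ is forced to be linear; once that is in hand the statement collapses to the two elementary facts $\sigma_1\mathbf{1} = \mathbf{1}$ and $\sigma_1\mathbf{w} = \zeta^{-1}\mathbf{w}$, which are precisely the bridge between the rotation $\tau$ of the domain and the permutation $\sigma$ of the target. The one thing to watch is the bookkeeping of exponents — the orientation of the shift $\sigma_1$ against that of $\tau$ — since the wrong sign would produce $\zeta^{-1}\mu$ instead of $\zeta\mu$ and break the identity; with the conventions fixed earlier in the paper it closes as written. The companion statement in $\mathbb{CP}^{n-2}$ follows by the same computation after transporting everything through the linear, symplectic Lim coordinate $f$.
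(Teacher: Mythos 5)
Your proof is correct and follows essentially the same route as the paper: both arguments reduce to the fact that the image must be the unique line through $A$ and $B$ (so $u$ is, up to the reparametrisations $z\mapsto\zeta z$ fixing $0$ and $\infty$, the linear pencil $[\mathbf{w}+\mu z\,\mathbf{1}]$), followed by the same coordinate check of $u\circ\tau=\sigma\circ u$. Your packaging of that check via the eigenvector identities $\sigma_1\mathbf{1}=\mathbf{1}$ and $\sigma_1\mathbf{w}=\zeta^{-1}\mathbf{w}$ is a clean restatement of the paper's explicit computation, not a different argument.
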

\begin{proof}
Consider $\RS$ with the complex projective line $\mathbb{CP}^1$ by identifying $z\in \RS$ with $[z:1] \in \mathbb{CP}^1$. Suppose that $[\eta_A : \eta_B] = [z:1]$, and define a holomorphic sphere $u : \RS \rightarrow \CP$ by 
\begin{align} 
u(z) = u([\eta_A: \eta_B]) =[
\eta_A + \eta_Be^{i \frac{2\pi}{n}  } : 
\eta_A + \eta_Be^{i \frac{4\pi}{n}  } : 
\eta_A + \eta_Be^{i \frac{6\pi}{n}  } : ...: 
\eta_A +\eta_B
]
\end{align}
By explicit calculation
\[
\begin{cases}
    u(0) = u([0:1]) = B\\
    u(\infty) = u([1:0]) = A.
\end{cases}
\]
Then for $-\infty < r <\infty$,
\begin{align*}
u(\tau z) &= u(exp(r + i(t+\frac{2\pi}{n})) = u (exp(i \frac{2\pi}{n})z)
= u([e^{i\frac{2\pi}{n}}\eta_A:\eta_B])\\
&= [
\eta_A e^{i \frac{2\pi}{n}  }+ \eta_B e^{i \frac{2\pi}{n}  }  :  
\eta_A e^{i \frac{2\pi}{n}  }+ \eta_B e^{i \frac{4\pi}{n}  }  :  
\eta_A e^{i \frac{2\pi}{n}  }+ \eta_B e^{i \frac{6\pi}{n}  }  : ...: 
\eta_A e^{i \frac{2\pi}{n}  }+ \eta_B ]\\
&=\sigma u(z)
\end{align*}
As a result, $g u = u$. Next, suppose that $v: \RS \rightarrow \CP $ is another simple holomorphic sphere running through $A$ and $B$ of the same homotopy class. By calculate the Gromov-Witten invariant if necessary (see for example \cite[chapter 7]{mcduff2012j}), one sees that $v(\hat{\mathbb{C}}) = u(\hat{\mathbb{C}})$, as a result there exists then a M{\"o}bius transformation $\phi: \RS \rightarrow \RS$ s.t. $v(z) = u(\phi(z))$ and $v(0) = B, v(\infty) = A$, it follows that $v(z) = u(\zeta z) $ for some non-zero $\zeta \in \mathbb{C}$. This implies 
\begin{align}
\tau v(z) = \tau u(\zeta z) = u(\tau \zeta z) = \sigma u(\zeta z) = \sigma v(z)
\end{align}
Hence $v$ is clearly choreographic.
\end{proof}

\subsubsection{Special Configurations in $\mathbb{CP}^{n-2}$}
When it comes to the case $V=cst$ in system \eqref{System: H1}, the reduced phase space is $\mathbb{CP}^{n-2}$. The situation is slightly more complicated. We cannot use the total collision point any longer, because $P(\mathbf{Z}) = Q(\mathbf{Z}) = 0$ and $z_i = z_j,1\leq i<j\leq n$ implies that $\mathbf{Z} = 0$. Thus the total collision configuration does not exist on $\mathbb{CP}^{n-2}$. On the other hand, if we give up the reduction of translation, we cannot exclude the triviality later on (this point will become more clear in section \ref{Section: Application}).

In this sub-section we make an extra assumption that \textbf{$n=2m$ is an even integer}.
Let us consider two points $\mathbf{Z}_A$ and $\mathbf{Z}_B$ in $\mathbb{R}^{2n}$ s.t.
\begin{align}
& \mathbf{Z}_{A} = (e^{i \frac{2\pi}{m}  }, e^{i \frac{4\pi}{m}  },...,1,e^{i \frac{2\pi}{m}  }, e^{i \frac{4\pi}{m}  },...,1) \\
& \mathbf{Z}_{B} = (e^{i \frac{2\pi}{n}  }, e^{i \frac{4\pi}{n}  },e^{i \frac{6\pi}{n}},...,e^{i \frac{2\pi(n-1) }{n} } ,1 ) 
\end{align}
Note that these two points are centred, hence after Lim's coordinate transformation $\mathbf{W}=f(\mathbf{Z})$, they become two points 
\begin{align}
&\mathbf{W}_A=(w_1(\mathbf{Z}_{A}),w_2(\mathbf{Z}_{A}),....,w_{n-1}(\mathbf{Z}_{A}),(0,0)) \in \mathbb{R}^{2n}\\
&\mathbf{W}_B=(w_1(\mathbf{Z}_{B}),w_2(\mathbf{Z}_{B}),....,w_{n-1}(\mathbf{Z}_{B}),(0,0))\in \mathbb{R}^{2n}
\end{align}
They thus pass to two configurations in $\mathbb{CP}^{n-2}$, denoted as $A$ and $B$
\begin{align}
& A = [w_1(\mathbf{Z}_{A}):w_2(\mathbf{Z}_{A}):...:w_{n-1}(\mathbf{Z}_{A})] \tag{binary total collision}\\
& B = [w_1(\mathbf{Z}_{B}):w_2(\mathbf{Z}_{B}):...:w_{n-1}(\mathbf{Z}_{B})] \tag{n-polygon}
\end{align}
We call A the \textbf{binary total collision configuration}, and B the \textbf{n-polygon configuration}. Assume that $P_{0} =A $ and $P_{\infty}= B$. 
\begin{Lem}
\label{Lem: Chore2}
All the simple holomorphic sphere $u: \hat{\mathbb{C}} \rightarrow \mathbb{CP}^{n-2}$ s.t.   $u(0) = A$ and $u(\infty) = B$ are choreographic holomorphic sphere. 
\end{Lem}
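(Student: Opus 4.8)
The plan is to mirror the proof of Lemma \ref{Lem: Chore1}, replacing the explicit holomorphic sphere through the total collision and $n$-polygon by one built out of $\mathbf{Z}_A$ and $\mathbf{Z}_B$, and then to transport everything through the Lim transformation and the diagram \eqref{Diag: Permu2}. First I would lift the target configurations to $\mathbb{S}^{2n-1}$: set $W_A = i_n\circ\psi(A)$, $W_B = i_n\circ\psi(B)$, and let $Z_A = f^{-1}(W_A)$, $Z_B = f^{-1}(W_B)$, so that $\pi(Z_A)=A$ and $\pi(Z_B)=B$ after reduction. The key point, to be checked by direct computation, is that $\mathbf{Z}_A$ (resp. $\mathbf{Z}_B$) given in the excerpt is, up to a unit scalar, $Z_A$ (resp. $Z_B$); this is where the evenness hypothesis $n=2m$ enters, since $\mathbf{Z}_A$ is the ``binary'' polygon whose two identical $m$-gon blocks make it centred and $\sigma$-invariant after passing to $\mathbb{CP}^{n-2}$. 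Then I would define a candidate holomorphic sphere $u:\hat{\mathbb{C}}\to\mathbb{CP}^{n-2}$ by composing the pencil of lines through $Z_A$ and $Z_B$ in $\mathbb{C}^n$,
\[
\tilde u([\eta_A:\eta_B]) = \eta_A Z_A + \eta_B Z_B \in \mathbb{C}^n,
\]
with the Lim coordinates and projection: after noting that $f^{-1}$ is linear and unitary (so it commutes with taking linear pencils) and that $W_A,W_B$ lie in the image of $i_n$ (their last coordinate vanishes), one gets that $f(\tilde u([\eta_A:\eta_B]))$ has vanishing last coordinate for all $[\eta_A:\eta_B]$, hence descends to $u([\eta_A:\eta_B])=[w_1:\dots:w_{n-1}]\in\mathbb{CP}^{n-2}$. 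By construction $u(0)=A$ and $u(\infty)=B$.

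Next I would verify the choreographic identity $u\circ\tau = \sigma_2\circ u$. Since $\tau$ acts on $\hat{\mathbb{C}}=\mathbb{CP}^1$ by $[z:1]\mapsto[e^{i2\pi/n}z:1]$, i.e. by $[\eta_A:\eta_B]\mapsto[e^{i2\pi/n}\eta_A:\eta_B]$, one has
\[
\tilde u(\tau[\eta_A:\eta_B]) = e^{i\frac{2\pi}{n}}\eta_A Z_A + \eta_B Z_B.
\]
The claim reduces to showing that the right-hand side, after reduction, equals $\sigma_2$ applied to $u([\eta_A:\eta_B])$. Here I would use that $\tilde\sigma$ is linear and unitary on $\mathbb{C}^n$, and that the specific $Z_A,Z_B$ are eigen-like under $\tilde\sigma$ up to the diagonal circle action: concretely $\tilde\sigma \mathbf{Z}_B = e^{-i2\pi/n}\mathbf{Z}_B$ (the $n$-polygon cycles with a phase) and $\tilde\sigma \mathbf{Z}_A = \mathbf{Z}_A$ up to a phase coming from the $m$-gon structure (this is again where $n=2m$ is needed — one checks $e^{i2\pi/m}$-periodicity of the block pattern). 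Combining, $\tilde\sigma(\eta_A Z_A + \eta_B Z_B)$ agrees, modulo the diagonal $SO(2)$-action that is quotiented out in passing to $\mathbb{CP}^{n-2}$, with $e^{i\frac{2\pi}{n}}\eta_A Z_A + \eta_B Z_B$; chasing this through the tower of maps in diagram \eqref{Diag: Permu2} (which is precisely the definition of $\sigma_2$) gives $u\circ\tau=\sigma_2\circ u$. Thus the particular $u$ is choreographic.

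Finally, for an arbitrary simple holomorphic sphere $v$ through $A$ and $B$ in the same homotopy class $\alpha$, I would argue exactly as in Lemma \ref{Lem: Chore1}: by the Gromov–Witten / positivity-of-intersection count the image $v(\hat{\mathbb{C}})$ coincides with $u(\hat{\mathbb{C}})$ (the unique line through the two prescribed points in the minimal class), so $v = u\circ\phi$ for a Möbius transformation $\phi$; the normalisation $v(0)=A$, $v(\infty)=B$ forces $\phi$ to fix $0$ and $\infty$, hence $\phi(z)=\zeta z$ for some $\zeta\in\mathbb{C}^\ast$; since $\tau$ commutes with such dilations, $\tau v(z)=\tau u(\zeta z)=u(\tau\zeta z)=\sigma_2 u(\zeta z)=\sigma_2 v(z)$, so $v$ is choreographic too. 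The main obstacle I anticipate is the bookkeeping in the middle step: making sure that the phases produced by $\tilde\sigma$ acting on the binary-polygon $\mathbf{Z}_A$ and on the $n$-polygon $\mathbf{Z}_B$ really are \emph{diagonal} rotations (elements of $SO(2)$ acting on all vortices simultaneously), so that they are annihilated by the projection $\mathbb{S}^{2n-3}\to\mathbb{CP}^{n-2}$; any residual non-diagonal phase would break the argument, and checking this cleanly is exactly the place where the hypothesis $n=2m$ is indispensable.
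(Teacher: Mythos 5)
Your proposal follows essentially the same route as the paper's proof: the explicit pencil $\eta_B\mathbf{Z}_B+\eta_A\mathbf{Z}_A$ transported through the linear Lim transformation, the observation that $\tilde\sigma\mathbf{Z}_B=e^{-i2\pi/n}\mathbf{Z}_B$ and $\tilde\sigma\mathbf{Z}_A=e^{-i2\pi/m}\mathbf{Z}_A$ so that the residual factor $e^{i2\pi/m}$ is a diagonal rotation annihilated on passing to $\mathbb{CP}^{n-2}$ via diagram \eqref{Diag: Permu2}, and the M\"obius-reparametrisation argument for an arbitrary simple sphere as in Lemma \ref{Lem: Chore1}. The only slip is one of labelling: with the normalisation $u(0)=A$, $u(\infty)=B$ you must identify $[\eta_B:\eta_A]=[z:1]$ (so that $\tau$ multiplies $\eta_B$, the coefficient of $\mathbf{Z}_B$); as written, your convention puts the phase on $\eta_A$ and the same eigenvalue computation then yields the reversed relation $u\circ\tau^{-1}=\sigma\circ u$ rather than $u\circ\tau=\sigma\circ u$.
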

\begin{proof}
Consider $\RS$ with the complex projective line $\mathbb{CP}^1$ by identifying $z\in \RS$ with $[z:1] \in \mathbb{CP}^1$. Suppose that $[\eta_B : \eta_A] = [z:1]$, and define a holomorphic sphere $u : \RS \rightarrow \mathbb{CP}^{n-2}$ by 
\begin{align} 
u(z) = u([\eta_B: \eta_A]) =[ 
&\eta_B w_1(\mathbf{Z}_{B})   + \eta_A w_1(\mathbf{Z}_{A}) : \notag\\
&\eta_B w_2(\mathbf{Z}_{B})   + \eta_A w_2(\mathbf{Z}_{A}) : \notag\\
&\eta_B w_3(\mathbf{Z}_{B})   + \eta_A w_3(\mathbf{Z}_{A}) : ...:\notag\\
&\eta_B w_{n-1}(\mathbf{Z}_{B})   + \eta_A w_{n-1}(\mathbf{Z}_{A})]
\end{align}
By the definition of $\sigma$, we see that 
\begin{align}
u(\tau z) = u([&e^{i \frac{2\pi}{n}  }\eta_B: \eta_A])\\
=[ 
&e^{i \frac{2\pi}{n}  }\eta_B w_1(\mathbf{Z}_{B})   + \eta_A w_1(\mathbf{Z}_{A}) :...:e^{i \frac{2\pi}{n}  }\eta_B w_{n-1}(\mathbf{Z}_{B})   + \eta_A w_{n-1}(\mathbf{Z}_{A})] \notag\\
=[ 
& w_1(e^{i \frac{2\pi}{n}  }\eta_B \mathbf{Z}_{B}  + \eta_A \mathbf{Z}_{A}): ...: w_{n-1}(e^{i \frac{2\pi}{n}  }\eta_B \mathbf{Z}_{B}  + \eta_A \mathbf{Z}_{A})]
\end{align}
Now one verifies easily that 
\begin{align*}
e^{i \frac{2\pi}{n}}\eta_B \mathbf{Z}_{B} + \eta_A \mathbf{Z}_{A} = 
&( \eta_B e^{i \frac{4\pi}{n}  }  + \eta_A e^{i \frac{2\pi}{m}}, \eta_B e^{i \frac{6\pi}{n}  }  + \eta_A e^{i \frac{4\pi}{m}} ,\\
&..., \eta_B e^{i \frac{2(m+1)\pi}{n}  } +\eta_A, \eta_B e^{i \frac{2(m+2)\pi}{n}  }   +\eta_A e^{i \frac{2\pi}{m}},\\
&..., \eta_B + \eta_A e^{i \frac{(n-1)\pi}{m}}, e^{i \frac{2\pi}{n}}\eta_B+ \eta_A )\\
= &e^{i \frac{2\pi}{m}} \tilde{\sigma}(\eta_B \mathbf{Z}_{B} + \eta_A \mathbf{Z}_{A})
\end{align*}
Thus 
\begin{align}
u(\tau z)  =[ 
& w_1(e^{i \frac{2\pi}{m}} \tilde{\sigma}(\eta_B \mathbf{Z}_{B} + \eta_A \mathbf{Z}_{A})
): ...: w_{n-1}(e^{i \frac{2\pi}{m}} \tilde{\sigma}(\eta_B \mathbf{Z}_{B} + \eta_A \mathbf{Z}_{A})]\\
= [ 
& w_1(\tilde{\sigma}(\eta_B \mathbf{Z}_{B} + \eta_A \mathbf{Z}_{A})
): ...: w_{n-1}(\tilde{\sigma}(\eta_B \mathbf{Z}_{B} + \eta_A \mathbf{Z}_{A})] \notag
\end{align}
Now by the definition of the action $\sigma$ for $\mathbb{CP}^{n-2}$ (see diagram \eqref{Diag: Permu2}), one sees that 
\begin{align}
u(\tau z) = \sigma u(z)      
\end{align}
The rest of the proof is the same as that in lemma \ref{Lem: Chore1}.
\end{proof}

\subsubsection{The Compactness of $\mathcal{C}_G(0)$}
Let $\alpha$ be the $\omega$-minimal class, and 
\begin{align}
\mathcal{H}(\alpha, J_0, P_0, P_{\infty}) = \{&u\in\mathcal{C}^{\infty} [\hat{\mathbb{C}},\mathbb{CP}^{k}] | \notag\\
&[u] = \alpha,\notag \\
&u(0) = P_{0} \in \mathbb{CP}^{k},\notag\\
&u(\infty)= P_{\infty}\in \mathbb{CP}^{k},\notag \\ 
&\int_{\norm{z}\leq 1} u^{*}\omega = \langle \omega, \alpha \rangle, \notag\\
&\bar{\partial}_{J_0}u =0 \} 
\end{align}
Then lemma \ref{Lem: Chore1} and \ref{Lem: Chore2} actually imply that 
\begin{Pro}
\label{Pro: Compact0slice}
Let $P_{0} , P_{\infty} $ be chosen as in lemma \ref{Lem: Chore1} and in lemma \ref{Lem: Chore2} respectively, and let $\mathcal{H}(\alpha, J_0, P_0, P_{\infty})$ be defined as above. Then $\mathcal{C}_G(0)$ is a $\mathbb{S}^1$-invariant compact manifold.
\end{Pro}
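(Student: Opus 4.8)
The plan is to identify $\mathcal{C}_G(0)$ explicitly as a single circle of reparametrized projective lines. First, by elliptic regularity every $u\in\mathcal{B}$ with $\bar{\partial}_{J_0}u=0$ is smooth, so $\mathcal{C}_G(0)$ consists of exactly the $G$-invariant (i.e. choreographic) elements of $\mathcal{H}(\alpha,J_0,P_0,P_\infty)$. Since $\alpha$ is the $\omega$-minimal class of $\mathbb{CP}^{k}$, it is the class of a projective line, in fact the positive generator of $H_2(\mathbb{CP}^{k};\mathbb{Z})\cong\mathbb{Z}$; hence every $J_0$-holomorphic sphere representing $\alpha$ is simple, since a multiply covered sphere would represent a proper multiple of its underlying class. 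Now Lemma~\ref{Lem: Chore1} (when $k=n-1$) and Lemma~\ref{Lem: Chore2} (when $k=n-2$) say that every such simple sphere with the prescribed ends is automatically choreographic. Therefore
\[
\mathcal{C}_G(0)=\mathcal{C}(0)=\mathcal{H}(\alpha,J_0,P_0,P_\infty),
\]
and it remains to analyse the right-hand side.

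Next I would describe $\mathcal{H}(\alpha,J_0,P_0,P_\infty)$ by hand. A degree-one $J_0$-holomorphic sphere in $\mathbb{CP}^{k}$ is a linear parametrization of a projective line $\mathbb{CP}^1\subset\mathbb{CP}^{k}$ — this is the same standard fact (positivity of intersections, or the Gromov--Witten count of lines through two points) already used in the proofs of Lemmas~\ref{Lem: Chore1} and~\ref{Lem: Chore2}. The configurations $A$ and $B$ are distinct in both $\mathbb{CP}^{n-1}$ and $\mathbb{CP}^{n-2}$ (the representing vectors in $\mathbb{C}^n$, and their images under the unitary Lim map, are not proportional), so there is a unique projective line $\ell$ through $P_0$ and $P_\infty$, and the image of any $u\in\mathcal{H}(\alpha,J_0,P_0,P_\infty)$ is $\ell$. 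Two such parametrizations differ by a M{\"o}bius transformation of $\hat{\mathbb{C}}$ fixing $0$ and $\infty$, i.e. by $z\mapsto\zeta z$ with $\zeta\in\mathbb{C}^{*}$, and the normalization condition $\int_{\norm{z}\le 1}u^{*}\omega$ in the definition of $\mathcal{B}$ is strictly monotone in $|\zeta|$, so it pins $|\zeta|$ down uniquely and leaves only $\zeta\in\mathbb{S}^1$. The explicit sphere built in the proof of Lemma~\ref{Lem: Chore1} (resp.~\ref{Lem: Chore2}), rescaled once in the $z$-variable to meet the normalization, shows $\mathcal{C}_G(0)\neq\emptyset$; combined with the above we conclude that $\mathcal{C}_G(0)$ is diffeomorphic to $\mathbb{S}^1$.

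Finally, this $\mathbb{S}^1$ is precisely the residual reparametrization action $(e^{it}\!\cdot u)(z)=u(e^{it}z)$: it preserves the class $\alpha$, the marked values $u(0)=P_0$ and $u(\infty)=P_\infty$, and the rotation-invariant normalization integral, so it maps $\mathcal{B}$ into itself; it preserves the choreography relation $u\circ\tau=\sigma\circ u$ because $\tau$ (multiplication by $e^{2\pi i/n}$) commutes with rotations of $\hat{\mathbb{C}}$; and it acts freely on nonconstant spheres, since $u(e^{it}z)\equiv u(z)$ would force $u$ constant. By the preceding paragraph it acts transitively on $\mathcal{C}_G(0)$, so $\mathcal{C}_G(0)$ is a single free $\mathbb{S}^1$-orbit, hence an $\mathbb{S}^1$-invariant compact manifold. (Compactness can alternatively be obtained from Gromov compactness: a sequence in $\mathcal{C}_G(0)$ has a Gromov-convergent subsequence; no bubbling occurs because the $\omega$-minimal class $\alpha$ cannot split off a nonconstant component; and no area concentrates at $0$ or $\infty$ since the values there are fixed; so the limit again lies in $\mathcal{C}_G(0)$.) The only non-routine ingredient is the classification of degree-one $J_0$-holomorphic spheres in $\mathbb{CP}^{k}$ as reparametrized lines through two prescribed points — the fact already invoked in Lemmas~\ref{Lem: Chore1} and~\ref{Lem: Chore2} — together with the elementary check that $A$ and $B$ are distinct; granted these, everything else is bookkeeping.
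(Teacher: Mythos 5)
Your proof is correct and takes essentially the same route as the paper: both hinge on the identification $\mathcal{C}_G(0)=\mathcal{C}(0)=\mathcal{H}(\alpha,J_0,P_0,P_{\infty})$ supplied by Lemmas~\ref{Lem: Chore1} and~\ref{Lem: Chore2}. The only difference is that the paper then simply cites as well known that $\mathcal{C}(0)$ is an $\mathbb{S}^1$-invariant compact manifold for $P_0\neq P_{\infty}$, whereas you spell this out by exhibiting $\mathcal{C}(0)$ as a single free $\mathbb{S}^1$-orbit of normalized parametrizations of the line through $P_0$ and $P_{\infty}$.
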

\begin{proof}
By lemma \ref{Lem: Chore1} and lemma \ref{Lem: Chore2} one sees that for such specially chosen configurations 
\begin{align}
\mathcal{C}(0) = \mathcal{C}_{G}(0) = \mathcal{H}(\alpha, J_0, P_{0}, P_{\infty})
\end{align}
It is well known that $J_0$ is regular and $\mathcal{C}(0)$ is a $\mathbb{S}^1$-invariant compact manifold for arbitrary $P_{0} \neq P_{\infty}$. The consequence follows.
\end{proof}

\subsubsection{The Non-Compactness of $\mathcal{C}_G$}
\paragraph{}
Perhaps the most crucial obeservation in the work of Hofer and Viterbo in \cite{hofer1992weinstein} is the non-compactness of $\mathcal{C}$, if $[\mathcal{H}]$, the free $\mathbb{S}^1$ cobordism class of the manifold $\mathcal{H}$, were not empty. This together with some asymptotic estimation and the Gromov compactness will permit one to find a periodic solution of the Hamiltonian system \eqref{System: H2}, although not necessarily a choreography. In our case, we can adapte ourselves to the settings in \cite{hofer1992weinstein} to show the following proposition:
\begin{Pro}
$\mathcal{C}_G$ is not compact.
\end{Pro}
\begin{proof}
See appendix \ref{Appendix: non-compactness}
\end{proof}

Finally, once a solution in the symmetric invariant manifold is found, Palais' principle then indicates that it is indeed a symmetric solution in the original manifold. After using the elliptic regularity, we see that these solutions are all smooth and they become solutions in classical sense. The estimate for asymptotic behavior of the action functional around $P_{0}$ and $P_{\infty}$ and the Gromov compactness are thus still valid as when no symmetry is involved. In particular we have actually achieve the following result, which is an choreographic analogue of \cite[Theorem 1.1]{hofer1992weinstein}:
\begin{Thm}
\label{Thm:2poleperiodic}
Let $H: \CP \rightarrow \mathbb{R}$ be a smooth Hamiltonian satisfying:\\
1. $H(\sigma z) = H(z), \forall z\in \CP$ \\
2. There exist $\sigma$-invariant open neighborhoods $\mathcal{U}(P_0)$ and $\mathcal{U}(P_{\infty})$ s.t. \\
$$H|_{\mathcal{U}(P_0)} = h_0 \in \mathbb{R}, H|_{ \mathcal{U}(P_{\infty})} = h_{\infty} \in \mathbb{R}$$\\
3. $h_0 < h_{\infty}, \quad h_0 \leq H \leq h_{\infty} $ \\

Then the Hamiltonian system $\dot{z} = \mathbf{X}_H(z)$ possesses a non-constant T-periodic reduced choreography $z^{*}$, satisfying 
\begin{align*}
h_0 < H(z^{*}) < h_{\infty}, \quad T(h_{\infty}-h_{0}) < \pi
\end{align*}
\end{Thm}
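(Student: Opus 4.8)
The plan is to follow the Hofer–Viterbo strategy for the Weinstein conjecture on $\mathbb{CP}^k$, carried out inside the $G$-fixed subsystem that we have set up. First I would assemble the ingredients already in place: by Proposition~\ref{Pro: Palais}, $\mathcal{B}_G$ is a totally geodesic Hilbert submanifold; by the Lemma chain culminating in Lemma~\ref{Lem: H_symmetry}, $f_\lambda(u) = \bar\partial_{J_0}u + \lambda h(u)$ is a smooth Fredholm section of $\mathcal{E}_G \to \mathcal{B}_G$ whenever $H$ is $\sigma$-invariant; by Proposition~\ref{Pro: Compact0slice}, $\mathcal{C}_G(0) = \mathcal{H}(\alpha, J_0, P_0, P_\infty)$ is a compact $\mathbb{S}^1$-invariant manifold with non-trivial unoriented cobordism class; and by the preceding proposition, $\mathcal{C}_G$ (the full parametrised zero set over $\lambda \in \mathbb{R}^+$) fails to be compact. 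The point is that all the structural scaffolding of \cite{hofer1992weinstein} survives restriction to the symmetric locus because $\sigma$ is Kähler (Lemma~\ref{Lem: Sp}) and $J_0$ remains regular on the $G$-invariant configuration space.

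Next I would run the compactness-dichotomy argument. Since $\mathcal{C}_G$ is non-compact but each slice $\mathcal{C}_G(0)$ is compact, there is a sequence $(\lambda_j, u_j) \in \mathcal{C}_G$ along which either $\lambda_j \to \lambda_\infty < \infty$ with the $u_j$ degenerating, or $\lambda_j$ stays in a bounded set and bubbling/energy concentration occurs, or $\lambda_j \to \infty$. Gromov compactness (valid verbatim here, since the $G$-invariant maps form a closed subset of the full space of $J_0$-holomorphic-with-Hamiltonian-perturbation maps, and energy is uniformly bounded by $\langle \omega, \alpha\rangle$ plus a controlled Hamiltonian term) forces a bubble to split off; the asymptotic analysis near $P_0$ and $P_\infty$ — where $H$ is locally constant, so the perturbed Cauchy–Riemann equation degenerates to the unperturbed one near the two poles — shows the bubble cannot be a ghost and must carry a non-constant closed characteristic of the level set $\{H = c\}$ for some $c \in (h_0, h_\infty)$. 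Because every object in the construction is $G$-equivariant, the limiting periodic orbit $z^*$ lies in the fixed-point set of the loop action $g_1$ (resp. $g_2$), i.e.\ it is a non-constant reduced choreography. The period bound $T(h_\infty - h_0) < \pi$ comes, exactly as in \cite{hofer1992weinstein}, from comparing the symplectic area $\langle \omega, \alpha\rangle$ of the minimal sphere class with the action $\int z^* \lambda\, dt$ picked up in the limit, together with the monotonicity inequality for $\mathbb{CP}^k$ with its Fubini–Study normalisation.

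The main obstacle — and the step that genuinely requires the symmetric setup rather than a black-box citation — is verifying that the degeneration actually happens \emph{within} $\mathcal{B}_G$ and produces a \emph{non-constant} symmetric orbit, rather than escaping to the non-symmetric part of $\mathcal{B}$ or collapsing onto one of the fixed points $P_0, P_\infty$. For this I would use Palais' principle in the form already invoked in Proposition~\ref{Pro: Palais}: the symmetric critical points of the relevant action functional are exactly the critical points of its restriction to the fixed-point manifold, so a symmetric Palais–Smale-type sequence cannot leak out equivariance in the limit. To exclude collapse onto $P_0$ or $P_\infty$ one uses that $\mathcal{U}(P_0), \mathcal{U}(P_\infty)$ are $\sigma$-invariant and $H$ is locally constant there, so the homotopy class $\alpha$ and the normalisation $\int_{\norm{z}\le 1} u^*\omega = \langle\omega,\alpha\rangle$ obstruct the entire sphere from being swallowed into a neighbourhood of a pole — precisely the ``$\mathbb{S}^1$-cobordism is non-trivial'' input of Proposition~\ref{Pro: Compact0slice}. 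Once non-triviality and equivariance of the limit are secured, elliptic regularity (which commutes with the finite group action $G$) upgrades the weak solution to a smooth classical reduced choreography, and the explicit inequalities follow by the same energy bookkeeping as in the non-symmetric case.
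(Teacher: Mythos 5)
Your proposal follows essentially the same route as the paper: the paper's proof of this theorem is literally a citation of Hofer--Viterbo's Theorem 1.1, with the surrounding text arguing (as you do) that the equivariant scaffolding --- Palais' principle for $\mathcal{B}_G$, the symmetric section $f_\lambda$, the identification $\mathcal{C}(0)=\mathcal{C}_G(0)$ for the special poles, the non-compactness of $\mathcal{C}_G$, and elliptic regularity --- lets the original bubbling/asymptotic analysis go through unchanged inside the fixed-point locus. Your sketch just unpacks the internals of that cited argument in more detail; no substantive difference.
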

\begin{proof}
See \cite[Theorem 1.1]{hofer1992weinstein}.
\end{proof}

\subsection{Reduced Choreography}
In this sub-section let us consider the induced Hamiltonian system \eqref{System: H2} on $\CP$. When $k=n-2$, we will assume in addition that $n$ is even. Our aim is to show that, under mild conditions, the energy levels on which there exists at least one non-trivial reduced choreography form a dense set. Consider the Hamiltonian system
\begin{align*}
\dot{\vz}(t) = X_{H}(\vz(t)) = \mathcal{J}\nabla H(\vz(t)),\quad \vz\in \CP
\end{align*}
and let $P_{0},P_{\infty}$ be chosen as in lemma \ref{Lem: Chore1} and \ref{Lem: Chore2}.
\begin{Hyp}
Assume that the reduced Hamiltonian $H$ satisfies the following assumptions:
\begin{align}
&\text{$H$ is smooth} \label{V0}\tag{V0};\\
&\text{$H$ is $\sigma$-invariant, i.e. }H(\vz) = H(\sigma \vz),\forall \vz \in \mathbb{R}^{2n}\label{V1}\tag{V1};\\
& H(P_0) \neq H(P_{\infty}) \label{V2}\tag{V2}
\end{align}
\end{Hyp}
As an application of Theorem \ref{Thm:2poleperiodic} we prove the following results:
\begin{Thm}
\label{Thm: ChoreOnH}
Suppose that $H$ satisfies \eqref{V0}-\eqref{V2}. Let $\beta_1 = \min\{H(P_0),H(P_{\infty})\}, \beta_2 = \max\{H(P_0),H(P_{\infty})\}$, $\mathcal{I} = (\beta_1,\beta_2)$ be the open interval. Denote
\begin{align*}
&\mathcal{D} = \{ c\in \mathcal{I}| \text{ $S_c = H^{-1}(c)$ has a $\sigma$-invariant connected component $S_c^{\sigma}$} \}\\
&\mathcal{G} = \{c\in \mathcal{I} | \text{ $S_c = H^{-1}(c)$ possedes a reduced simple choreography on it} \}
\end{align*}
Then $\mathcal{G}$ is dense in $\mathcal{D}$.

\end{Thm}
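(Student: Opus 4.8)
\textbf{Proof plan for Theorem \ref{Thm: ChoreOnH}.} The plan is to argue that on any open sub-interval of $\mathcal{D}$ one can find, after a $\sigma$-equivariant modification of the Hamiltonian away from the energy surface of interest, a Hamiltonian to which Theorem \ref{Thm:2poleperiodic} applies, producing a reduced choreography whose energy is arbitrarily close to the prescribed value. Concretely, fix $c_0 \in \mathcal{D}$ and $\varepsilon>0$; I want to exhibit some $c \in (c_0-\varepsilon,c_0+\varepsilon) \cap \mathcal{I}$ for which $S_c$ carries a reduced simple choreography, which is exactly density of $\mathcal{G}$ in $\mathcal{D}$.

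First I would localize around the $\sigma$-invariant connected component $S_{c_0}^{\sigma}$ of $S_{c_0}$. Since $c_0$ lies in the open interval $\mathcal{I}$ strictly between $H(P_0)$ and $H(P_\infty)$, there is a small closed $\sigma$-invariant tubular neighborhood $N \cong S_{c_0}^{\sigma} \times (c_0-\delta,c_0+\delta)$ on which $H$ restricts, up to the regular-value identification, to the projection onto the second factor; $\sigma$-invariance of $N$ and of this product structure can be arranged because $\sigma$ is an isometry of the Kähler metric (Lemma \ref{Lem: Sp}) and $H$ is $\sigma$-invariant, so one averages a Gauss-type collar over the finite cyclic group $\langle\sigma\rangle$. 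Next I would build a new smooth function $\tilde H: \CP \to \R$ that (i) agrees with a reparametrization of $H$ on a thinner shell $S_{c_0}^{\sigma}\times(c_0-\delta/2,c_0+\delta/2)$, (ii) is $\sigma$-invariant everywhere, (iii) is locally constant equal to $h_0$ near $P_0$ and to $h_\infty$ near $P_\infty$ with $h_0<h_\infty$, and (iv) satisfies $h_0 \le \tilde H \le h_\infty$ globally. The $\sigma$-invariant neighborhoods $\mathcal{U}(P_0),\mathcal{U}(P_\infty)$ required in Hypothesis \ref{Hyp: Hamiltonian} exist because $P_0$ and $P_\infty$ are themselves $\sigma$-fixed (they are the $n$-polygon and total/binary collision configurations of Lemma \ref{Lem: Chore1} and \ref{Lem: Chore2}); one then cuts $H$ off to a constant near each pole, again averaging the cutoff over $\langle\sigma\rangle$ to keep it equivariant, while monotonically interpolating through the protected shell so that the sandwich condition (iv) holds. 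This $\tilde H$ satisfies all three hypotheses of Theorem \ref{Thm:2poleperiodic}.

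Applying Theorem \ref{Thm:2poleperiodic} to $\tilde H$ yields a non-constant $T$-periodic reduced choreography $z^{*}$ with $h_0 < \tilde H(z^{*}) < h_\infty$ and $T(h_\infty-h_0)<\pi$. The orbit $z^{*}$ lies on a level set $\tilde H^{-1}(c)$; the point is that by shrinking $\delta$ and choosing the cutoff so that the only dynamically nontrivial region is the protected shell — i.e. $\tilde H$ has no critical points outside $S_{c_0}^{\sigma}\times(c_0-\delta/2,c_0+\delta/2)$ other than the constant pieces near the poles where $X_{\tilde H}\equiv 0$ — any non-constant orbit must in fact sit inside that shell, where $\tilde H$ and $H$ share the same level sets and the same Hamiltonian vector field up to a positive time-reparametrization. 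Hence $z^{*}$ is, after reparametrization, a non-constant periodic reduced choreography of the original system on $S_c = H^{-1}(c)$ with $c \in (c_0-\delta/2,c_0+\delta/2)$; taking $\delta<2\varepsilon$ gives $c \in \mathcal{G}\cap(c_0-\varepsilon,c_0+\varepsilon)$, proving density.

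\textbf{Main obstacle.} The delicate point is not the abstract existence statement — that is handed to us by Theorem \ref{Thm:2poleperiodic} — but ensuring that the modified Hamiltonian $\tilde H$ is simultaneously (a) genuinely $\sigma$-invariant, (b) constant near the two prescribed poles $P_0,P_\infty$, and (c) globally sandwiched between those two constants, \emph{without} destroying the part of $H$ near $S_{c_0}^\sigma$, and moreover in such a way that the periodic orbit produced cannot escape the protected shell. Controlling where the orbit lives is what forces the careful bookkeeping: one must know that outside the shell $\tilde H$ is either locally constant (so orbits are stationary) or monotone along the collar direction with no recurrence, which requires the $\sigma$-invariant collar to trap a whole connected component and requires $c_0 \in \mathcal{I}$ to guarantee room between the poles. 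Making the equivariant cutoff compatible with the sandwich condition $h_0 \le \tilde H \le h_\infty$ — in particular choosing $h_0,h_\infty$ correctly relative to $c_0$ depending on whether $H(P_0)<H(P_\infty)$ or the reverse, and interpolating monotonically — is the one genuinely fiddly construction, and I would carry it out explicitly using a $\sigma$-averaged partition of unity subordinate to $\{\mathcal{U}(P_0),\mathcal{U}(P_\infty),N\}$.
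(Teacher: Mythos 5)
Your overall strategy coincides with the paper's: replace $H$ by a $\sigma$-invariant function that is locally constant near $P_0$ and $P_\infty$, interpolates monotonically across a collar around the invariant component $S_{c_0}^{\sigma}$, apply Theorem~\ref{Thm:2poleperiodic}, and read the resulting choreography back as an orbit of the original system after a positive time reparametrization. But one step, as you justify it, fails. You confine the orbit to the protected shell by arguing that $\tilde H$ has no critical points outside the shell except where it is constant; absence of critical points does not exclude non-constant periodic orbits (these live on regular level sets), so nothing in your construction prevents the orbit produced by Theorem~\ref{Thm:2poleperiodic} from sitting in the interpolation region \emph{outside} the shell, where $\tilde H$ bears no relation to $H$. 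What is actually needed is that $\tilde H^{-1}\bigl((h_0,h_\infty)\bigr)$ be contained in the shell, i.e.\ that $\tilde H$ take only the two values $h_0$ and $h_\infty$ on the entire complement of the collar. That in turn requires the topological input you are missing: the compact hypersurface collar $U_{\epsilon}=\bigcup_{|\delta|<\epsilon}S^{\sigma}_{c+\delta}$ separates $\CP$ into exactly two components, one containing $P_0$ and the other containing $P_\infty$ (the paper invokes Alexander duality for this), after which one simply sets the new Hamiltonian $\equiv 0$ on the whole $P_0$-side and $\equiv 1$ on the whole $P_\infty$-side. Your $\sigma$-averaged partition of unity subordinate to $\{\mathcal{U}(P_0),\mathcal{U}(P_\infty),N\}$ cannot produce such a function, since those three sets do not cover $\CP$.

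Two smaller points. Your collar $N\cong S_{c_0}^{\sigma}\times(c_0-\delta,c_0+\delta)$ presupposes that $c_0$ is a regular value, whereas $\mathcal{D}$ may contain critical values; the paper disposes of this by a preliminary Sard--Smale step, restricting to $\mathcal{D}^{*}=\mathcal{R}\cap\mathcal{D}$ with $\mathcal{R}$ the open dense set of regular values, and proving the approximation statement there. And the case $H(P_0)>H(P_\infty)$ needs an explicit argument: the paper obtains \emph{reversed} choreographies in that case and converts them into choreographies by relabelling $z^{m}_{j}(t)=\hat z^{m}_{n+1-j}(t)$; your remark about ``choosing $h_0,h_\infty$ correctly'' gestures at this but does not carry it out.
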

\begin{proof}
First, let us assume in plus that $H(P_0)< H(P_{\infty})$.  If $S_c$ supports a reduced simple choreography $\mathbf{z}^c$, then $S_c$ must have a $\sigma$-invariant component, because $\mathbf{z}^c$ is $\sigma$-invariant. As a result, $\mathcal{G}\subset \mathcal{D}$. From now on suppose that $\mu(\mathcal{D})>0$. By Sard-Smale theorem, the regular value $\mathcal{R}$ is open dense in $\mathcal{I}$. Let $\mathcal{D}^{*} = \mathcal{R} \cap \mathcal{D}$. We prove next that $\mathcal{D}^{*}$ is dense in $\mathcal{G}$. Take a number $c\in \mathcal{D}^{*}$ and consider $S_c = H^{-1}(c)$. Since $\CP$ is a compact manifold and $\mathbb{R}$ is Hausdorff, \eqref{V0} implies that $H$ is a proper map. As a result, $S_c$ is compact, so is $S^{\sigma}_c$.
We can construct for small $\epsilon > 0$ a one parameter family of the form 
\begin{align*}
U_{\epsilon} = \bigcup_{\delta\in(-\epsilon,\epsilon) }S^{\sigma}_{c+{\delta}}
\end{align*}
s.t. $U_{\epsilon}$ is diffeomorphic to a sub-manifold of $\mathbb{CP}^{k}$, moreover $\sigma U_{\epsilon} = U_{\epsilon}$ because $S^{\sigma}_{c+{\delta}}$ are all $\sigma$-invariant. Note also that $U_{\epsilon}$ separate $\mathbb{CP}^k$ into two disjoint component $U_0$ and $U_{\infty}$, s.t. $P_{0}\in U_0$ and $P_{\infty} \in U_{\infty}$ (by Alexander duality).

Now by choosing  
a smooth function $\phi: \mathbb{R} \rightarrow \mathbb{R}$ s.t. 
\begin{align}
&\phi(s) = 
\begin{cases}
0 \quad \text{if $s\leq -\frac{1}{2}$};\\
1 \quad \text{if $s\geq \frac{1}{2}$}
\end{cases}\\
&\phi'(s) >0, \text{ for } s\in (-\frac{1}{2}, \frac{1}{2})
\end{align}
and let 
\begin{align}
F(\mathbf{z}) = 
\begin{cases}
\phi(\frac{\delta}{\epsilon}) \quad \text{if $\mathbf{z}\in S^{\sigma}_{c+{\delta}}$};\\
0 \quad \text{if $\mathbf{z}\in U_0 \setminus S^{\sigma}_{c+{\delta}}$}\\
1 \quad \text{if $\mathbf{z}\in U_{\infty} \setminus S^{\sigma}_{c+{\delta}}$}
\end{cases}\\
\end{align}
Since $U_{\epsilon}$ is $\sigma$-invariant, $F(\mathbf{z}) = F(\sigma\mathbf{z})$. One verifies that $F(\mathbf{z})$ satisfies the condition of theorem \ref{Thm:2poleperiodic}, by taking $U(P_0) = U_{0}, U(P_{\infty})=U_{\infty}, h_0 = 0, h_{\infty} =1$. 
Theorem \ref{Thm:2poleperiodic} then implies that $F(\mathbf{z})$ has a periodic solution $\mathbf{z}^*$, which is, after a reparametrisation of time, a reduced simple choreography of system \eqref{System: H2} and satisfies that $\displaystyle |H(\mathbf{z}^*)-c|< \frac{\epsilon}{2}$. As one has the right to choose $\epsilon$ arbitrarily small, we have actually shown that, given $c\in \mathcal{D}^*$, there exists a sequence of reduced simple choreographies of $\{\mathbf{z}^m(t)\}_{m\in \mathbb{N}}$ s.t. $H(\mathbf{z}^{m}) \rightarrow c$. The theorem is thus proved for $H(P_0)< H(P_{\infty})$.\\
\indent Now if $H(P_0) > H(P_{\infty})$, we can repeat the above argument to find \textbf{reversed} reduced choreographies\footnote{By saying reversed we mean that $\mathbf{z}(t) = \sigma \mathbf{z}(\tau t)$.} $\{\mathbf{\hat{z}}^m(t)\}_{m\in \mathbb{N}}$. Now by setting $z_{j}^{m}(t) = \hat{z}^{m}_{n+1-j}(t)$, one sees that $\mathbf{z}^{m}$ is a reduced choreography solving the Hamiltonian system while $H(\mathbf{z}^{m}) =H(\mathbf{\hat{z}}^{m}), m\in \mathbb{N}$. The theorem is proved.
\end{proof}

\section{Application to Identical N-Vortex Problems}
\paragraph{}In this section we discuss how to apply the theorems proved in the last section to examples raised from hydrodynamics, i.e. the n-vortex problem. 
The Hamiltonian system is of the form
\begin{align*}
\mathbf{\Gamma} \dot{\mathbf{Z}}(t) = \mathcal{J}_{\R^{2N}}\nabla \HR2N(\Zb(t))
\end{align*}
where 
\begin{align*}
\mathbf{\Gamma} = diag(\Gamma_1,\Gamma_1,\Gamma_2,\Gamma_2,...,\Gamma_n,\Gamma_n), \quad \Gamma_i>0, 1\leq i\leq n
\end{align*}
is the vorticity matrix. Again we distinguish two cases, 
\begin{align}
\HR2N(\mathbf{Z})& =-\frac{1}{4\pi} \sum_{1\leq i<j\leq n}\Gamma_i \Gamma_j\log|z_i-z_j|^2 \tag{n-vortex Euler}\\
\mathbf{Z}&\in \mathbb{R}^{2n} \setminus \Delta,\quad  1\leq i <j \leq n \notag\\
\HR2N(\mathbf{Z}) &=-\frac{1}{2}(\mu\sum_{i=1}^{n}\Gamma_i^2 \log\frac{1}{1-|z_i|^2} + \lambda \sum_{i<j}\Gamma_i \Gamma_j\log|z_i-z_j|^2)\tag{n-vortex BEC}\\ 
\mathbf{Z} &\in\underbrace{\mathbb{D}\times\mathbb{D}...\times \mathbb{D}}_{n} \setminus \Delta,\quad 1\leq i <j \leq n,\quad  \mathbb{D} =\{z\in \mathbb{R}^2, |z|<1 \} \notag
\end{align}
In \cite{celli2003distances}, Celli has proved that all the vorticities being identical is a necessary condition for the existence of simple choreography in the n-vortex system. We will show the existence of relative choreographies for identical n-vortex problems from both Euler equation and Gross–Pitaevskii equation by using theorem \ref{Thm: ChoreOnH}, thus showing that the vorticities being identical is also sufficient.\\
\indent{}Some properties of relative equilibria used in the proof in this section are summarised in appendix \ref{Appendix: n-vortex relative equilibria}.

\subsection{The N-Vortex Problem from Euler Equation}
\label{Section: Application}
\paragraph{}Let us consider the Hamiltonian system 
\begin{align*}
\HR2N(\mathbf{Z}) =-\frac{1}{4\pi} \sum_{1\leq i<j\leq n}\log|z_i-z_j|^2
\end{align*}

This system comes from the Euler equation that describes the interaction of $n$ identical vortices in the plane without boundary. Since there is no boundary, there is no potential part due to vortex-boundary interaction. As a result the system is invariant under the diagonal action of Euclidean group $SE(2)$, i.e., rotation and translation. Now by the discussion in previous sections, the reduced phase space is $\mathbb{CP}^{n-2}$ and we denote the reduced Hamiltonian by $H_{\mathbb{CP}^{n-2}}$.
According to the argument in the previous section, we see that the existence of centred relative choreographies near a prescribed energy level $c\in \mathbb{R}$ depends on the absence of critical points, the compactness and the symmetric component on the prescribed energy surfaces $S_c = H_{\mathbb{CP}^{n-2}}^{-1}(c)$. We summarise the validity of all the conditions in the following lemma: 

\begin{Lem}
\label{Lem: 3propertiesNvortex}
Let $S_c = H_{\mathbb{CP}^{n-2}}^{-1}(c)$ be an energy surface of the reduced Hamiltonian on $\mathbb{CP}^{n-2}$. Then
\begin{enumerate}
\item $S_c$ is compact ;
\item $S_c$ is regular except for at most finitely many $c$ ;
\item Let $B$ be the n-polygon configuration on $\mathbb{CP}^{n-2}$. There exists $\epsilon>0$ s.t. for $H_{\mathbb{CP}^{n-2}}(B) < c < H_{\mathbb{CP}^{n-2}}(B)+\epsilon$, $S_c$ has a $\sigma$-invariant component.
\end{enumerate}
\end{Lem}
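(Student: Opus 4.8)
The plan is to verify the three assertions separately, using on one hand the boundedness/collision-freeness features already noted in the introduction for the identical $n$-vortex problem, and on the other hand Sard-type arguments together with the structure of relative equilibria recalled in the appendix.

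For item (1), I would argue that $H_{\mathbb{CP}^{n-2}}$ is a proper map. In $\mathbb{R}^{2n}$, along a level set of the unreduced Hamiltonian $\HR2N(\mathbf{Z}) = -\tfrac{1}{4\pi}\sum_{i<j}\log|z_i-z_j|^2$, every summand is upper bounded (since the total sum is fixed and each other summand is bounded below once $I$ is fixed), hence the pairwise distances are bounded below, i.e.\ the configuration stays away from the diagonal $\Delta$. Combined with the normalisation $I=1$ (and $P=Q=0$), this means the relevant set in $\mathbb{S}^{2n-3}$ is a compact subset of $\mathbb{S}^{2n-3}\setminus\Delta$; pushing down to $\mathbb{CP}^{n-2}$ via the proper quotient map $\pi$ shows $S_c = H_{\mathbb{CP}^{n-2}}^{-1}(c)$ is compact. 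The only subtlety is to confirm that the reduced Hamiltonian descends to a smooth (in particular continuous and proper) function on the whole of $\mathbb{CP}^{n-2}$, which holds precisely because collisions are excluded on every level set.

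For item (2), I would invoke the fact that $H_{\mathbb{CP}^{n-2}}$ is a real-analytic function on the compact real-analytic manifold $\mathbb{CP}^{n-2}$ (it is the descent of a real-analytic function on $\mathbb{S}^{2n-3}\setminus\Delta$), so by the analytic case of Sard's theorem — or more simply by noting that the critical values of a real-analytic function on a compact manifold form a finite set, since the critical set is a real-analytic subvariety mapped by an analytic function and the image cannot be infinite without forcing $H$ locally constant on a component, contradicting that $H$ is non-constant — there are at most finitely many critical values. Alternatively one can cite Sard--Smale as in the proof of Theorem~\ref{Thm: ChoreOnH}. Either way, all but finitely many $c$ are regular values, hence $S_c$ is a smooth hypersurface for all but finitely many $c$.

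Item (3) is where the real work lies, and I expect it to be the main obstacle. The $n$-polygon configuration $B$ is a relative equilibrium (the Thomson configuration), hence a critical point of $H_{\mathbb{CP}^{n-2}}$ on $\mathbb{CP}^{n-2}$; I would first show it is a \emph{local extremum} — indeed one should check, using the Hessian computation for the regular $n$-gon recalled in Appendix~\ref{Appendix: n-vortex relative equilibria}, that $B$ is a strict local minimum of $H_{\mathbb{CP}^{n-2}}$ (up to the overall sign conventions). Granting that, for $c$ slightly above $H_{\mathbb{CP}^{n-2}}(B)$ the level set $S_c$ has a connected component which is a small sphere-like hypersurface encircling $B$ inside the basin of the local minimum. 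It remains to see this component is $\sigma$-invariant: since $\sigma$ is a symplectic (hence smooth) isometry of the Kähler metric fixing $B$ (Lemma~\ref{Lem: Sp}, and $\sigma B = B$ because the $n$-gon is permutation-symmetric), $\sigma$ preserves $H_{\mathbb{CP}^{n-2}}$ (Lemma~\ref{Lem: In}) and maps the basin of $B$ to itself, hence permutes the connected components of $S_c$ lying in that basin; if the local minimum is \emph{non-degenerate} (Morse) then for small $\epsilon$ there is exactly one such component, which must therefore be fixed by $\sigma$. The delicate points are (a) establishing non-degeneracy, or at least isolatedness, of $B$ as a critical point on $\mathbb{CP}^{n-2}$ — the reduced Hessian of the regular polygon has a known spectrum but one must rule out zero eigenvalues coming from the reduction, and handle the possibility of several $\sigma$-invariant components by shrinking $\epsilon$; and (b) the sign: I would double-check whether $B$ is a minimum or maximum of $H_{\mathbb{CP}^{n-2}}$, adjusting the claimed inequality $H_{\mathbb{CP}^{n-2}}(B) < c$ accordingly (the statement as written presupposes $B$ is a local minimum). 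If $B$ turns out not to be a strict local extremum, a fallback is to use a Lyapunov–Schmidt / Morse-theoretic description of $S_c$ near $B$ together with the $\sigma$-action on the (finite-dimensional) reduced kernel to still extract an invariant component for small $c - H_{\mathbb{CP}^{n-2}}(B)$.
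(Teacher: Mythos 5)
Your treatment of item (3) rests on the regular $n$-gon $B$ being a (nondegenerate) local extremum of the full reduced Hamiltonian on $\mathbb{CP}^{n-2}$, and this is exactly where the argument breaks: by the classical Thomson--Havelock analysis, the regular $n$-gon of identical vortices is linearly unstable for $n\geq 8$, hence it is a \emph{saddle} of $H_{\mathbb{CP}^{n-2}}$, not a local extremum. For such $n$ there is no ``small sphere-like hypersurface encircling $B$'', the nearby level sets are not compact little components living in a basin, and your fallback via Lyapunov--Schmidt is too vague to recover an invariant component. The paper sidesteps the Hessian of the full $H$ entirely. It restricts $H$ to the $\sigma$-invariant submanifold $\mathcal{M}_{\rho}=\{|z_1|^2=\dots=|z_n|^2=\rho\}$ defined in \eqref{Def: UnitCircle}, where Jensen's inequality (Lemma \ref{Lem: Npolymaximum}) shows the $n$-gon is the strict extremizer of $\sum_{i<j}\log l_{ij}$ among configurations on a common circle, for \emph{every} $n$. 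Nearby level sets of the restriction $H|_{\mathcal{M}_{\rho}}$ then furnish a connected, $\sigma$-invariant \emph{subset} of $S_c$, and the purely topological Lemma \ref{Lem: SymComp} upgrades any connected $\sigma$-invariant subset to a full $\sigma$-invariant connected component of $S_c$ (Proposition \ref{Pro: Smallconnectbig}). That last lemma is the ingredient you are missing: it makes a partial invariant piece suffice, so one never needs $B$ to be an extremum of the unrestricted Hamiltonian. (Your worry about the sign is legitimate, and in fact the paper itself is inconsistent here: the lemma asks for $H(B)<c<H(B)+\epsilon$ while Proposition \ref{Pro: Smallconnectbig} is stated for $H(B)>c>H(B)-\epsilon$; since the $n$-gon maximizes $\sum\log l_{ij}$ on $\mathcal{M}_{\rho}$ and $H$ carries the factor $-\tfrac{1}{4\pi}$, $B$ is a minimum of $H|_{\mathcal{M}_{\rho}}$ and the orientation in the lemma is the correct one.)

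Item (2) also has a gap as written: $H_{\mathbb{CP}^{n-2}}$ is not an analytic function on the compact manifold $\mathbb{CP}^{n-2}$ --- it is only defined on the complement of the collision set $\tilde\Delta$, where it blows up to $+\infty$. So ``a real-analytic function on a compact manifold has finitely many critical values'' does not apply directly; one must first rule out critical points (relative equilibria) accumulating on $\tilde\Delta$ with critical values tending to $+\infty$. That is precisely the Shub-type estimate the paper imports from \cite{wang2018relative} for the Euler case (and proves in Lemma \ref{Lem: ShubBEC} for the BEC case); with it, the critical set is compact in $\mathbb{CP}^{n-2}\setminus\tilde\Delta$ and your finiteness argument goes through. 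Item (1) is fine and matches the cited argument: constancy of $H$ bounds each $\log|z_i-z_j|^2$ from above and below on a level set, so $S_c$ is a closed subset of a compact set away from $\tilde\Delta$.
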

\begin{proof}
We summarise the proof in the following three results:
\begin{itemize}
\item{\textbf{Compactness}: }This is proved in \cite[theorem 2.2]{wang2018relative};
\item{\textbf{Regular Value}: }This is proved in\cite[lemma 3.1]{wang2018relative};
\item{\textbf{Connectivity}: }This is proved in proposition \ref{Pro: Smallconnectbig} in appendix \ref{Appendix: n-vortex relative equilibria} of this article.
\end{itemize}
\end{proof}
We now apply theorem \ref{Thm: ChoreOnH} and lemma \ref{Lem: 3propertiesNvortex} to conclude that :

\begin{Thm}
\label{Thm: ChoreVortexEuler}
Consider the system \eqref{System: H1} with the Hamiltonian 
\begin{align*}
\HR2N(\mathbf{Z}) =-\frac{1}{4\pi} \sum_{1\leq i<j\leq n}\log|z_i-z_j|^2
\end{align*}
Assume that $n$ is even. Then there exist infinitely many non-trivial centred relative choreography.
\end{Thm}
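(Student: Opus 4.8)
The plan is to verify the three hypotheses \eqref{V0}--\eqref{V2} of Theorem~\ref{Thm: ChoreOnH} for the reduced Hamiltonian $H_{\mathbb{CP}^{n-2}}$ of the identical $n$-vortex problem from Euler's equation, and then feed Lemma~\ref{Lem: 3propertiesNvortex} into the density conclusion of that theorem. First I would note that smoothness \eqref{V0} is immediate away from the diagonal, which is excluded from $\mathbb{CP}^{n-2}$ because (as observed in the text) a total collision with $P=Q=0$ forces $\mathbf{Z}=0$, so the reduced Hamiltonian descends to a genuine smooth function on the compact manifold $\mathbb{CP}^{n-2}$; in fact one must check that every boundary stratum of the diagonal is sent to $+\infty$ by $-\frac{1}{4\pi}\sum\log|z_i-z_j|^2$, so $H_{\mathbb{CP}^{n-2}}$ is proper (this is exactly the compactness statement of Lemma~\ref{Lem: 3propertiesNvortex}(1), already proved in \cite{wang2018relative}). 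The $\sigma$-invariance \eqref{V1} follows from Lemma~\ref{Lem: In} together with the fact that the Euler Hamiltonian $\HR2N$ is manifestly invariant under the cyclic permutation $\tilde\sigma$ of the vortices (it only depends on the mutual distances), so indeed $H_{\mathbb{CP}^{n-2}}(\sigma\mathbf{z})=H_{\mathbb{CP}^{n-2}}(\mathbf{z})$.

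The nontrivial input is \eqref{V2}: one has to exhibit the two special $\sigma$-invariant configurations $A$ (binary total collision) and $B$ ($n$-polygon) from Lemma~\ref{Lem: Chore2} and check $H_{\mathbb{CP}^{n-2}}(A)\neq H_{\mathbb{CP}^{n-2}}(B)$. Here I would compute both energies explicitly: for $B$ the $n$ vortices sit at the $n$-th roots of unity on the unit circle, so $\sum_{i<j}\log|z_i-z_j|^2$ is the classical quantity $2\log\prod_{i<j}|z_i-z_j| = 2\log(n\cdot n^{?})$, which is a finite, computable number (the Vandermonde of the roots of unity has absolute value $n^{n/2}$, giving $H_{\mathbb{CP}^{n-2}}(B)=-\frac{1}{4\pi}\log n^{n}$ up to the normalization of $I$ and the centering). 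For $A$, the configuration consists of $m=n/2$ coincident pairs at the vertices of a regular $m$-gon; the $\log|z_i-z_j|^2$ term then has $m$ pairs at distance $0$, which would naively give $-\infty$, so one must be careful: $A$ is a point of $\mathbb{CP}^{n-2}$ obtained \emph{after} Lim's coordinate change, and one should evaluate $H_{\mathbb{CP}^{n-2}}$ as the lift on $\mathbb{S}^{2n-3}\hookrightarrow \mathbb{S}^{2n-1}$ is pulled back through $f^{-1}$ — in other words $H_{\mathbb{CP}^{n-2}}(A)$ is the value of the Euler Hamiltonian at the point $\mathbf{Z}_A$, and $\mathbf{Z}_A$ as written does have coincident vortices, so $H_{\mathbb{CP}^{n-2}}(A)=+\infty$. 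That is fine for \eqref{V2}: $\beta_1=H_{\mathbb{CP}^{n-2}}(B)$ is finite, $\beta_2=+\infty$, and $\mathcal{I}=(H_{\mathbb{CP}^{n-2}}(B),+\infty)$; alternatively, if one prefers $A$ to lie in the open manifold, replace it by a nearby non-collision $\sigma$-invariant configuration of slightly larger energy, which still works since Lemma~\ref{Lem: Chore2} only needs the two endpoints to be $\sigma$-invariant and distinct. In any case \eqref{V2} holds, so Theorem~\ref{Thm: ChoreOnH} applies and produces the set $\mathcal{G}$ of energy levels carrying a non-trivial reduced simple choreography, dense in $\mathcal{D}$.

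The last step is to show $\mathcal{D}$ (hence $\mathcal{G}$) is infinite, so that one obtains infinitely many distinct non-trivial centred relative choreographies. Here I would invoke Lemma~\ref{Lem: 3propertiesNvortex}(3): for $c$ in a whole interval $(H_{\mathbb{CP}^{n-2}}(B),H_{\mathbb{CP}^{n-2}}(B)+\epsilon)$ the energy surface $S_c$ has a $\sigma$-invariant connected component, so $\mathcal{D}$ contains an interval and in particular has positive measure; Theorem~\ref{Thm: ChoreOnH} then gives a sequence $\mathbf{z}^m$ of non-trivial reduced simple choreographies with $H_{\mathbb{CP}^{n-2}}(\mathbf{z}^m)\to c$ for a dense set of levels $c$, and since the energy is a conserved quantity along each choreography, orbits at distinct energy levels are genuinely distinct. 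Finally each such reduced choreography lifts by quadrature (as explained in the text, using coordinates adapted to the reduction by rotations and translations) to a non-trivial centred relative choreography of the original system~\eqref{System: H1}, which is the claimed conclusion. The main obstacle I anticipate is the bookkeeping in \eqref{V2} — making sure the energies of $A$ and $B$ are correctly computed through Lim's transformation and the normalizations $I=1$, $P=Q=0$, and in particular confirming that $A$ (or its perturbation) really is a legitimate $\sigma$-invariant point of $\mathbb{CP}^{n-2}$ with energy different from that of $B$; everything else is a direct citation of Lemma~\ref{Lem: 3propertiesNvortex} and Theorem~\ref{Thm: ChoreOnH}.
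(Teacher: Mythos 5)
Your proposal follows the same route as the paper, which proves this theorem simply by combining Lemma~\ref{Lem: 3propertiesNvortex} with Theorem~\ref{Thm: ChoreOnH}; your extra bookkeeping on \eqref{V0}--\eqref{V2} (in particular that the reduced Hamiltonian blows up at the binary total collision $A$, so the relevant interval of energies is the one $(H(B),H(B)+\epsilon)$ supplied by Lemma~\ref{Lem: 3propertiesNvortex}(3), and that distinct energy levels give distinct orbits) is a faithful unpacking of what the paper leaves implicit. The only thing to retract is your parenthetical fallback of replacing $A$ by a nearby non-collision $\sigma$-invariant point: Lemma~\ref{Lem: Chore2} is not a statement about arbitrary $\sigma$-invariant endpoints --- its proof uses the exact linear pencil spanned by $\mathbf{Z}_A$ and $\mathbf{Z}_B$ and the identity $e^{i\frac{2\pi}{n}}\eta_B\mathbf{Z}_B+\eta_A\mathbf{Z}_A=e^{i\frac{2\pi}{m}}\tilde{\sigma}(\eta_B\mathbf{Z}_B+\eta_A\mathbf{Z}_A)$ --- so perturbing $A$ would destroy the identity $\mathcal{C}(0)=\mathcal{C}_G(0)$ on which the non-compactness argument rests; your primary reading (the $n$-polygon level is finite, the collision end is $+\infty$, and only the separating $\sigma$-invariant level sets near $H(B)$ enter the cutoff construction of Theorem~\ref{Thm: ChoreOnH}) is the one the paper intends.
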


\subsection{The N-Vortex Problem from Gross–Pitaevskii Equation}
Now we would like to show that there exist infinitely many relative simple choreographies by following similar lines as those in the previous example. To this end, we isolate vortices away from the boundary by choosing $I(z) = n\rho$ for $\rho<\frac{1}{n}$. Let $H_{\mathbb{CP}^{n-1}}$ be the reduced Hamiltonian on $\mathbb{CP}^{n-1}$.   

\begin{Lem}
\label{Lem: 3propertiesNvortexBEC}
Let $S_c = H^{-1}(c)$ be energy surface of the reduced Hamiltonian on $\mathbb{CP}^{n-1}$. Then
\begin{enumerate}
\item $S_c$ is compact; 
\item $S_c$ is regular except on $K\subset \mathbb{R}$ s.t. $\mu(K)=0$ and $K$ is closed.  
\item There exists $\epsilon>0$ s.t. for $H(B) < c < H(B)+\epsilon$, $S_c$ has a $\sigma$-invariant component.
\end{enumerate}
\end{Lem}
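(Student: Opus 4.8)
The plan is to prove the three items separately, imitating the structure of Lemma~\ref{Lem: 3propertiesNvortex} for the Euler case but adapting to the fact that here we have reduced only by rotation and we work on the fixed sphere $I=n\rho$ with $\rho<1/n$.

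\textbf{Compactness.} The starting point are the \emph{a priori} bounds forced by the constraint $I(\mathbf{Z})=\sum_i|z_i|^2=n\rho$. Since $|z_i|^2\le\sum_j|z_j|^2=n\rho$, one gets $|z_i|\le\sqrt{n\rho}<1$, so on the constraint surface the vortices are automatically bounded away from the boundary circle $|z|=1$; hence each trap term $-\tfrac12\mu\Gamma_i^2\log\frac{1}{1-|z_i|^2}$ stays bounded above and below. Likewise $|z_i-z_j|\le 2\sqrt{n\rho}$, so $\log|z_i-z_j|^2$ is bounded above and each interaction term $-\tfrac12\lambda\Gamma_i\Gamma_j\log|z_i-z_j|^2$ is bounded below. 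Therefore $H_{\mathbb{CP}^{n-1}}$ is bounded below on $\mathbb{CP}^{n-1}\setminus\Delta$ and tends to $+\infty$ as one approaches $\Delta$ (the colliding pair's term diverges while all others remain bounded below). Consequently $S_c=H^{-1}(c)$ is bounded away from $\Delta$, hence a closed subset of the compact manifold $\mathbb{CP}^{n-1}$, hence compact; this is the exact analogue of \cite[Theorem~2.2]{wang2018relative} and I would cite it.

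\textbf{Regular values.} By the Sard--Smale theorem the critical value set $K$ of the smooth map $H_{\mathbb{CP}^{n-1}}:\mathbb{CP}^{n-1}\setminus\Delta\to\mathbb{R}$ has Lebesgue measure zero. To get that $K$ is closed, I would take critical values $c_k\to c$ with $c_k=H(p_k)$, $dH(p_k)=0$, and use compactness of $\mathbb{CP}^{n-1}$ to pass to $p_k\to p^\ast$. The limit cannot lie on $\Delta$, since $H\to+\infty$ there by the previous step, contradicting $c_k\to c<\infty$; so $p^\ast\in\mathbb{CP}^{n-1}\setminus\Delta$, and by continuity $dH(p^\ast)=0$, $H(p^\ast)=c$, whence $c\in K$. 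This is the same bookkeeping as in \cite[Lemma~3.1]{wang2018relative}.

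\textbf{Connectivity near $B$.} The $n$-polygon configuration $B=[e^{2\pi i/n}:e^{4\pi i/n}:\cdots:1]$ (lifted to the Thomson polygon of radius $\sqrt{\rho}$ on $I=n\rho$) is a relative equilibrium of the identical $n$-vortex BEC problem, hence a critical point of $H_{\mathbb{CP}^{n-1}}$; the Lagrange condition reduces to the identity $\sum_{\ell=1}^{n-1}(1-\zeta^\ell)^{-1}=\tfrac{n-1}{2}$ with $\zeta=e^{2\pi i/n}$, together with the radial contribution of $V$, which is the same for every $i$ at $B$. The crucial point is to show that $B$ is a \emph{strict local minimum} of $H_{\mathbb{CP}^{n-1}}$: I would compute the constrained Hessian of the reduced Hamiltonian at $B$ on $T_B\mathbb{CP}^{n-1}$ and check positive definiteness, using that the vorticities are identical and $\rho<1/n$ (the interaction term, which the polygon renders as large as possible, favours $B$, against the bounded trap term). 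Granting this, for $\epsilon$ small and $H(B)<c<H(B)+\epsilon$ I would let $W_c$ be the connected component of $\{H<c\}$ containing $B$: it is a small bounded neighbourhood of $B$ inside a chart, avoiding all other critical points, and it is $\sigma$-invariant because $\sigma B=B$ and $H\circ\sigma=H$ force $\sigma(W_c)=W_c$. Its topological boundary $\partial W_c$ is a nonempty closed $\sigma$-invariant subset of $S_c$, which for regular $c$ is a smooth (sphere-like) connected $\sigma$-invariant component $S_c^\sigma$; the remaining negligible values of $c$ are covered by the same component-of-sublevel-set argument. This is precisely Proposition~\ref{Pro: Smallconnectbig} transported from $\mathbb{CP}^{n-2}$ to $\mathbb{CP}^{n-1}$, and I would prove it in the appendix in that generality.

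\textbf{Expected main obstacle.} Everything except the last item is soft and parallels the Euler case (the \emph{a priori} bounds, Sard--Smale, the sublevel-set topology). The one genuinely non-routine point is the strict local minimality of the $n$-polygon $B$ for the reduced BEC Hamiltonian: establishing definiteness of the constrained Hessian requires an explicit (if elementary) computation with roots of unity and the trap potential, and it is exactly there that the hypotheses ``identical vorticities'' and $\rho<1/n$ are used.
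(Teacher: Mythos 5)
Your items (1) and (2) are essentially the paper's argument. For compactness the paper argues exactly as you do: $\rho<1/n$ forces $|z_i|^2<1$, the mutual distances are bounded above, hence on a fixed energy level they are bounded below, so $S_c$ is a closed subset of $\mathbb{CP}^{n-1}$ away from both the boundary and $\Delta$. For the regular values the paper invokes Sard--Smale together with Lemma~\ref{Lem: ShubBEC} (relative equilibria stay uniformly away from $\Delta$) to get that the critical set is compact and hence the critical value set is closed; your variant, which deduces the needed compactness of the critical points at bounded energy directly from the blow-up of $H$ near $\Delta$, is a valid and slightly more self-contained substitute.

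Item (3) is where there is a genuine gap. Your construction of the $\sigma$-invariant component rests entirely on the claim that the $n$-polygon $B$ is a \emph{nondegenerate strict local minimum of the full reduced Hamiltonian $H_{\mathbb{CP}^{n-1}}$}, and you do not prove it -- you only announce a Hessian computation ``to be checked''. This is not a routine verification: in the directions transverse to the torus $\mathcal{M}_{\rho}=\{|z_1|^2=\dots=|z_n|^2=\rho\}$ the trap term and the interaction term compete, and positive definiteness of the constrained Hessian on all of $T_B\mathbb{CP}^{n-1}$ is exactly the hard part; without it the sublevel component $W_c$ need not be a small neighbourhood of $B$ and its boundary need not be connected. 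The paper avoids this entirely: it restricts $H$ to the $\sigma$-invariant submanifold $\mathcal{M}_{\rho}$, on which the trap part $\mu\sum_i\log\frac{1}{1-|z_i|^2}$ is constant, so that only the interaction term matters there; Lemma~\ref{Lem: Npolymaximum} (a Jensen-inequality argument on chord lengths) shows the polygon is the extremum of $H|_{\mathcal{M}_{\rho}}$, Proposition~\ref{Pro: Smallconnectbig} then produces a small connected $\sigma$-invariant subset of $S_c\cap\mathcal{M}_{\rho}$ near $B$, and Lemma~\ref{Lem: SymComp} upgrades any connected $\sigma$-invariant subset of $S_c$ to a full $\sigma$-invariant component of $S_c$. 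Note that this route never requires $B$ to be a minimum (or even a nondegenerate critical point) of $H$ on the whole of $\mathbb{CP}^{n-1}$. To repair your proof you should either carry out and verify the full Hessian computation you defer, or adopt the paper's restriction-to-$\mathcal{M}_{\rho}$ plus Lemma~\ref{Lem: SymComp} mechanism, which only needs extremality along the invariant torus.
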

\begin{proof}
We prove the three properties one by one.
\begin{itemize}
\item{\textbf{Compactness}:} Since $\rho< \frac{1}{n}$, $I(z)<1$. In particular, $|z_i|^2< 1, \forall 1\leq i\leq n$. Thus the vortices are isolated from the boundary. Let $S_c$ be an energy surface that is non-empty. Since $I(z)<1$ the mutual distances are bounded above uniformly, hence they are also bounded below uniformly. This implies in particular that  $S_c$ is isolated not only from the boundary but also from the generalised diagonal $\Delta$(where collisions happen). As a result $S_c$ is compact.
\item{\textbf{Regular Value}:} Define 
\begin{align}
\mathcal{R} = \{ c\in \mathbb{R}| S_c \text{ is regular}  \}
\end{align}
In lemma \ref{Lem: ShubBEC} we have shown that the relative equilibria of the n-vortex problem in BEC are isolated from the generalised diagonal set $\Delta$. Hence $\mathcal{R}$ is open and dense as a direct application of the Sard-Smale theorem.
\item{\textbf{Connectivity}:} Observe that if we restrict the Hamiltonian to the set 
\begin{align*}
\mathcal{M}_{\rho} = \{|z_1|^2= |z_2|^2=\dotsm =|z_n|^2 = \rho\}, \rho>0
\end{align*} 
then the part $\mu\sum_{i=1}^{n} \log\frac{1}{1-|z_i|^2}$ becomes a constant. As a result, proposition \ref{Pro: Smallconnectbig}  applies and we are done.
\end{itemize}
\end{proof}
Note that unlike the case of the n-vortex problem from Euler equation, the n-vortex problem in BEC is not invariant under translation hence we don't know if the solution found is a centred relative choreography or not. It is possible that the solution being a configuration that vorticies rotating around its center of vorticity which is not $(0,0)$. We would like to exclude this possibility. As a result, we must convince ourselves that these orbits are not an n-polygon configuration put in a rotating frame. We prove the following proposition:
\begin{Pro}
\label{Pro: nontrivialcpn1}
Suppose that the n-vortex problem in BEC has a n-polygon relative equilibrium in some rotating frame. Then it must be centred.
\end{Pro}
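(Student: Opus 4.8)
The plan is to work with the explicit $n$-polygon relative equilibrium of the BEC $n$-vortex problem and show that its rotational symmetry forces the center of vorticity to sit at the origin. First I would recall from Appendix~\ref{Appendix: n-vortex relative equilibria} (and from standard facts on relative equilibria, cf.\ Lemma~\ref{Lem: ShubBEC}) the defining equations of a relative equilibrium: a configuration $\mathbf{Z}=(z_1,\dots,z_n)$ rotating uniformly with angular velocity $\omega$ about a point $\zeta_0\in\mathbb{C}$, so that $\dot z_i = i\omega(z_i-\zeta_0)$ for all $i$, which via the Hamiltonian equations becomes a system of algebraic equations relating $\mu,\lambda,\omega$, the $|z_i|^2$ and the pairwise differences $z_i-z_j$. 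The key structural input is that a \emph{simple choreographic} $n$-polygon is invariant under the combined action of the circular permutation $\tilde\sigma$ and a rigid rotation by $2\pi/n$; concretely, an $n$-polygon configuration is one for which there is $\theta$ with $z_{i+1} = e^{i\theta} z_i$ (indices mod $n$), i.e.\ the vortices are the vertices of a regular $n$-gon.

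The main step is then a symmetry/averaging argument. If the configuration is a regular $n$-gon that is a relative equilibrium rotating about $\zeta_0$, I would apply the order-$n$ cyclic symmetry $z_i\mapsto z_{i+1}$, which acts on the plane as rotation by $2\pi/n$ about the $n$-gon's geometric center, call it $c=\frac1n\sum z_i$. This symmetry must fix the whole dynamical picture, so it must fix the rotation center $\zeta_0$; but the only fixed point of a nontrivial rotation about $c$ is $c$ itself, hence $\zeta_0=c$. Next I would bring in the specific feature of the BEC potential: the trap term $-\tfrac12\mu\sum_i\Gamma_i^2\log\frac{1}{1-|z_i|^2}$ breaks translation invariance and the equations of motion contain a term $i\mu\Gamma_i z_i/(1-|z_i|^2)$ pulling each vortex toward the origin. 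Writing out the relative-equilibrium condition $i\omega(z_i-c) = i\mu\frac{z_i}{1-|z_i|^2} + (\text{interaction terms})$ and summing over $i$, the interaction terms cancel pairwise (by antisymmetry of $\beta_{ij}F'$), the left side gives $i\omega(\sum z_i - nc)=0$, and one is left with $\sum_i \frac{z_i}{1-|z_i|^2}=0$. For a regular $n$-gon centered at $c$ all the $|z_i|^2$ are equal (since the vertices are equidistant from $c$... but not necessarily from $0$ unless $c=0$), so I would instead exploit that the cyclic symmetry forces $|z_i|^2$ to be independent of $i$ \emph{only when} the $n$-gon is centered at $0$; the vanishing of $\sum_i z_i/(1-|z_i|^2)$ together with $\sum_i z_i = nc$ and the $n$-gon geometry then forces $c=0$.

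The hard part will be handling the case $n=2$ (or more generally small $n$, where a ``regular $n$-gon'' degenerates) and making the claim ``the cyclic symmetry of the configuration forces $\zeta_0$ to be the geometric center'' fully rigorous — one must be careful that the relative equilibrium could a priori rotate about a point other than its centroid, and rule this out using precisely the non-translation-invariance of the BEC Hamiltonian rather than a symmetry argument that would also (falsely) apply in the translation-invariant Euler case. I would therefore organize the proof as: (i) derive the relative-equilibrium equations for the BEC system; (ii) sum them to get $\sum_i z_i/(1-|z_i|^2)=0$ and, separately, an identity pinning $\omega\,c$; (iii) use the $n$-polygon (cyclic) structure to show all $|z_i|$ coincide and the centroid is the only rotation-invariant point; (iv) conclude $c=0$, hence $\zeta_0 = 0$, i.e.\ the relative equilibrium is centred. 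The delicate bookkeeping is step (ii)–(iii); everything else is formal.
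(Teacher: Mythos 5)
Your proposal takes a genuinely different route from the paper --- you work with the force balance in the equations of motion for a relative equilibrium, whereas the paper uses only conservation of energy --- but as written it has two genuine gaps. First, the pivotal step ``the cyclic symmetry must fix the rotation center $\zeta_0$, hence $\zeta_0=c$'' does not hold: the rotation by $2\pi/n$ about the polygon's centroid $c$ (composed with the relabelling $\tilde\sigma$) fixes the \emph{configuration}, but it is a symmetry of the BEC Hamiltonian only when $c=0$, because the trap term $\sum_i\log\frac{1}{1-|z_i|^2}$ is invariant only under rotations about the origin. So you cannot conclude that the dynamics, and in particular the rotation centre, is preserved by it; this is exactly the point you yourself flag as ``the hard part,'' and it is not resolved. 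Without $\zeta_0=c$, summing the equations of motion yields $i\omega n(c-\zeta_0)=-i\mu\sum_i z_i/(1-|z_i|^2)$ and not the vanishing of the right-hand side. Second, even granting $\sum_i z_i/(1-|z_i|^2)=0$ at one instant, your step (iv) --- that this ``forces $c=0$'' --- is asserted rather than proved: it is a single complex equation in the parameters $|c|$, $r_2$, $\theta$ and does not obviously exclude nontrivial roots. To make it work you would need to use that the identity holds for \emph{all} times, i.e.\ for all phases $\theta$, and then expand in harmonics --- at which point you are essentially reproducing the paper's argument. A smaller third issue: the Remark following the proposition allows the rotating frame to be centred anywhere and to rotate non-uniformly, so the Ansatz $\dot z_i=i\omega(z_i-\zeta_0)$ with fixed $\omega$ and $\zeta_0$ does not cover all the cases the statement is meant to exclude.

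For comparison, the paper's proof avoids the equations of motion entirely. Since the polygon is rigid, $\sum_{i<j}\log|z_i-z_j|^2$ is constant along the orbit, and conservation of $H$ then forces $\prod_i(1-|z_i|^2)$ to be constant in time. Writing $1-\norm{OA_k}^2=\alpha+\beta\cos(\theta+2(k-1)\pi/n)$ with $\alpha=1-r_1^2-r_2^2$ and $\beta=2r_1r_2$, this product is a trigonometric polynomial of degree $n$ in the phase $\theta$ whose top harmonic has coefficient proportional to $\beta^n$; constancy forces $\beta=0$, i.e.\ $r_1=0$ (centred) or $r_2=0$ (total collision). If you want to salvage your approach, the cleanest fix is to replace steps (ii)--(iv) by this energy argument; your step (i) is then unnecessary.
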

\begin{Rmk}
When we say any, we mean that this rotating frame can be centred at either the origin or any other point; and the rotating speed can be either uniform or not.  
\end{Rmk}

\begin{proof}
We argue by contradiction. Suppose that the solution thus found is an n-polygon configuration in a rotational frame, then it looks like in figure \ref{fig: NonTrivial}. Let $r_1 = \norm{OO'}$ be the distance between the origin and the centre of vorticity, and $r_2 = \norm{O'A_1}= \norm{O'A_2}=...=\norm{O'A_n}$ be constant, then by the cosine formulae :
\begin{align*}
&\norm{OA_1}^2 = r_1^2 + r_2^2 - 2 r_1r_2 \cos{\theta}\notag\\
&\norm{OA_2}^2 = r_1^2 + r_2^2 - 2 r_1r_2 \cos{(\theta+\frac{2\pi}{n})}\notag\\
&...\\
&\norm{OA_n}^2 = r_1^2 + r_2^2 - 2 r_1r_2 \cos{(\theta+\frac{2(n-1)\pi}{n})}
\end{align*}
By the assumption, $\sum_{1\leq i \leq n} log (1-|z_i|^2)$ is a constant, which implies, by denoting $\alpha = 1- r_1^2 - r_2^2, \beta =  2 r_1r_2$, that the following quantity is a constant too.
\begin{align*}
cst = &(1-\norm{OA_1}^2)(1-\norm{OA_2}^2)...(1-\norm{OA_n}^2) \\
=& (\alpha + \beta \cos{\theta})(\alpha + \beta \cos{(\theta+ \frac{2\pi}{n})})(\alpha + \beta \cos{(\theta+ \frac{2(n-1)\pi}{n})})\\
=& \sum_{k=1}^{n} (a_k \cos{k\theta} +b_k \sin{k\theta}) + \alpha^n
\end{align*}
This is a trigonometric polynomial, hence to be a constant one must have $a_k = b_k = 0,1 \leq k \leq n$. In particular, explicit calculation shows that $a_n =0$ implies that $\beta^n=0$.
To make the above trigonometric polynomial a constant, we thus need that $\beta =0$. In other words, either $r_1 =0$ or $r_2 = 0$ (they cannot be both 0 because otherwise it corresponds to no point in $\mathbb{CP}^{n-1}$) However, 
\begin{itemize}
\item{$r_1=0 $:} in this case the orbit corresponds to the centred n-polygon configuration;
\item{$r_2=0 $:} in this case the orbit corresponds to the total collision configuration. 
\end{itemize}
\begin{figure}
\begin{center}
\includegraphics[width=40mm,scale=0.5]{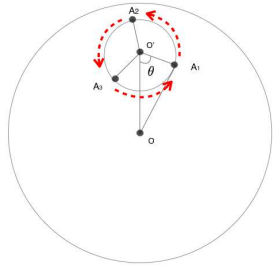}
\caption{A 3-polygon (equilateral triangle) in a rotational frame}
\label{fig: NonTrivial}
\end{center}
\end{figure}
The conclusion follows.
\end{proof}
Now we have actually proved the following theorem :
\begin{Thm}
\label{Thm: ChoreVortexGP}
Consider the system \eqref{System: H1} with the Hamiltonian 
\begin{align*}
\HR2N(\mathbf{Z}) =-\frac{1}{2}(\mu\sum_{i=1}^{n} \log\frac{1}{1-|z_i|^2} + \lambda \sum_{i<j}\log|z_i-z_j|^2)
\end{align*}
Then for any $n\in \mathbb{N}^{+}$ there exist infinitely many non-trivial relative choreography.\end{Thm}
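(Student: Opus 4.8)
The plan is to derive Theorem~\ref{Thm: ChoreVortexGP} from Theorem~\ref{Thm: ChoreOnH} in the same way Theorem~\ref{Thm: ChoreVortexEuler} was derived, but working on $\mathbb{CP}^{n-1}$ instead of $\mathbb{CP}^{n-2}$: the BEC Hamiltonian is rotation-invariant but not translation-invariant, so only one reduction is available, and this is precisely what removes any parity restriction on $n$ (Lemma~\ref{Lem: Chore1} holds for every $n$, whereas the $\mathbb{CP}^{n-2}$ construction needed $n$ even). First I would fix the moment of inertia at $I=n\rho$ with $\rho<\tfrac1n$, as before Lemma~\ref{Lem: 3propertiesNvortexBEC}; this keeps all vortices strictly inside $\mathbb{D}$, identifies the reduced space with $\mathbb{CP}^{n-1}$, and produces a reduced Hamiltonian $H=H_{\mathbb{CP}^{n-1}}$ that is smooth off the image of $\Delta$ and $\sigma$-invariant by Lemma~\ref{Lem: In}. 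I would take the two ``poles'' to be $P_0=B$ the $n$-polygon configuration and $P_\infty=A$ the total collision configuration of Lemma~\ref{Lem: Chore1}: both are $\sigma$-invariant, $H(B)$ is finite, while $H$ is unbounded near $A$ (the term $-\tfrac{\lambda}{2}\sum_{i<j}\log|z_i-z_j|^2$ blows up there), so in particular $H(P_0)\neq H(P_\infty)$ and the hypotheses \eqref{V0}--\eqref{V2} are met on the region of interest.

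Next I would invoke Lemma~\ref{Lem: 3propertiesNvortexBEC}, which supplies exactly the three facts that feed Theorem~\ref{Thm: ChoreOnH}: for $c$ in the relevant range the level set $S_c=H^{-1}(c)$ is compact (using $\rho<\tfrac1n$ to bound mutual distances both above and below, hence to keep $S_c$ away from $\Delta$), it is regular outside a closed null set, and for $H(B)<c<H(B)+\epsilon$ it carries a $\sigma$-invariant connected component. Thus the set $\mathcal{D}$ of Theorem~\ref{Thm: ChoreOnH} contains a nonempty open interval, so $\mu(\mathcal{D})>0$, and Theorem~\ref{Thm: ChoreOnH} yields that $\mathcal{G}$ is dense in $\mathcal{D}$. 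Consequently there are infinitely many distinct energy values $c$ whose level set $S_c$ carries a non-constant reduced simple choreography $\zb^c(t)$; lifting each $\zb^c$ to $\R^{2n}$ by quadrature along the reduction by rotations produces a (relative) choreography $\Zb^c(t)$ of \eqref{System: H1}, non-trivial because $\zb^c$ is non-constant, and orbits sitting on distinct energy levels are distinct. This already proves the stated existence of infinitely many non-trivial relative choreographies.

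Finally I would rule out that any $\Zb^c$ is a regular $n$-gon relative equilibrium seen in a (possibly non-uniformly) rotating, possibly off-centre frame. Here Proposition~\ref{Pro: nontrivialcpn1} is the tool: were $\Zb^c$ an $n$-polygon configuration in some rotating frame, Proposition~\ref{Pro: nontrivialcpn1} forces it to be centred at the origin; but a regular $n$-gon centred at the origin and rotating rigidly projects to a single point of $\mathbb{CP}^{n-1}$, so its reduction would be constant, contradicting the non-constancy of $\zb^c$. Hence none of the orbits produced is of this degenerate type.

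The step I expect to be the main obstacle is reconciling the singularity of $H$ at the total collision pole $A$ with the requirement in Theorem~\ref{Thm:2poleperiodic}/Theorem~\ref{Thm: ChoreOnH} that the Hamiltonian be constant on a neighbourhood of $P_\infty$ — one simply cannot use $H$ itself near $A$, since $H$ is unbounded there. The way around it, already built into the proof of Theorem~\ref{Thm: ChoreOnH}, is that one never applies Theorem~\ref{Thm:2poleperiodic} to $H$ directly but to the auxiliary $\sigma$-invariant function obtained by flattening $\phi((H-c)/\epsilon)$ to the constants $0$ and $1$ on the two components cut out by the separating hypersurface $U_\epsilon$; this only uses local smoothness of $H$ near the finitely many level sets involved — guaranteed by the compactness part of Lemma~\ref{Lem: 3propertiesNvortexBEC}, which keeps those level sets bounded away from $\Delta$ — together with the separation property, after which the whole Hofer--Viterbo package (non-compactness of $\mathcal{C}_G$, the asymptotic action estimates near the two poles, and Gromov compactness) carries over verbatim. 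The remaining delicate input, isolated in Lemma~\ref{Lem: 3propertiesNvortexBEC}, is the connectivity and $\sigma$-invariance of the component of $S_c$ for $c$ slightly above $H(B)$, which rests on the structure of the $n$-vortex relative equilibria developed in the appendix.
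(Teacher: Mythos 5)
Your proposal is correct and follows essentially the same route as the paper, which derives the theorem directly from Lemma~\ref{Lem: 3propertiesNvortexBEC}, Theorem~\ref{Thm: ChoreOnH}, and Proposition~\ref{Pro: nontrivialcpn1} exactly as you assemble them. Your closing remark on reconciling the blow-up of $H$ at the total collision pole with the constancy hypothesis near $P_\infty$ (via the cutoff $F$ built in the proof of Theorem~\ref{Thm: ChoreOnH} and the compactness of $S_c$ away from $\Delta$) is in fact more explicit than the paper itself on this point.
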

\begin{proof}
This is a direct consequence of lemma \ref{Lem: 3propertiesNvortexBEC}, theorem \ref{Thm: ChoreOnH}, and proposition \ref{Pro: nontrivialcpn1}.
\end{proof}

\subsection{Comparison with Other Methods}
\paragraph{}
Finally we give some heuristic remarks about the solutions which could be found using perturbative methods. Let us take the 4-vortex problem  to illustrate the idea. As mentioned before, the 4 vortex problem is in general non-integrable. As a result it seems hopeless to try to describe the complete bifurcation diagram of periodic orbits. Let us consider the reduced energy Hamiltonian $\HCPN2$, and denote by $c$ the reduced energy level. There are at least two places where one might locally construct relative choreographies.

\paragraph{Bifurcation from the equilateral triangle:}
Recall that the minimum of $\HCPN2$ is achieved when the 4 vortices form the centred square (4-polygon). Thus the Moser-Weinstein theorem should show the existence of relative periodic solutions of short period, bifurcating from the square  \cite{calleja2018choreographies, carvalho2014lyapunov, borisov2004absolute};

\paragraph{Bifurcation from the simultaneous pair of double collisions:}
In contrast, when the reduced energy tends to infinity, there is a pair of vortices that become close to one another. Now, consider two vortices of vorticity $2$, located respectively at $(\pm 1,0)$ (thus forming a relative equilibrium). Next, consider replace each such vortex by a pair of close vortices of vorticity $1$, that chase one another in the cluster. At the same time the two clusters will rotate approximately as two votices would. As another illustration of the superposition principle (see the periodic orbits of Bartsch et al. \cite{bartsch2017global} and the KAM tori of Khanin \cite{khanin1982quasi}), this should prove the existence of relative periodic orbits bifurcating out of the simultaneous pair of double collisions. 

\paragraph{}
Because each time we find a one-parameter family of non-trivial choreography by perturbative method, it implies that we can continue to apply our theorem \ref{Thm: ChoreOnH} to get more relative choreographies, until the symmetry of this component is broken. As a result, we believe our global approach can be seen as producing solutions of similar interests by both perturbation around the 4-polygon and around pairs of binary collisions. 

\section*{Acknowledgement}
The author would like to thank Jacques F{\'e}joz and Eric S{\'e}r{\'e} for having introduced the subject of n-vortex problem to me and taught me many ideas in $J$-holomorphic curves and choreography, also for their consistent patience and encouragement on this topic. 
\begin{figure}
\begin{center}
\includegraphics[width=80mm,scale=0.5]{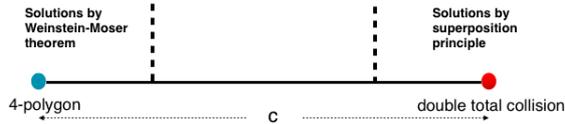}
\caption{The configuration changing with reduced energy level}
\label{Fig:Energy_Config}
\end{center}
\end{figure}
\appendix
\section{The Non-Compactness of $\mathcal{C}_G$}
\label{Appendix: non-compactness}
\paragraph{}
By adapting ourselves with the symmetric constraint taken into account, we would like to show the existence of a reduced simple choreography. To this end we need some non-compactness for the $\mathcal{C}_{G}$. More precisely,denote $[\mathcal{M}]$ the free $\mathbb{S}^1$ cobordism class of the manifold $\mathcal{M}$, in \cite{hofer1992weinstein} the following proposition is proved:
\begin{Pro}\cite[proposition 2.7]{hofer1992weinstein}
\label{Pro: HoferViterboNoCompact}
Let $(V,\omega)$ be a complex symplectic manifold and $J$ be a regular almost complex structure calibrated by $\omega$, $\alpha$ be a $\omega$-minimal free homotopy class, and $P_{0}, P_{\infty}$ are disjoint closed submanifold and $\mathcal{C}$ is defined as in \eqref{Cset}. If $\mathcal{C}$ is compact, then $[\mathcal{C}(0)] = [\emptyset]$.
\end{Pro}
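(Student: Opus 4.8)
The plan is to reconstruct the cobordism argument of Hofer and Viterbo. The idea is that if $\mathcal{C}$ is compact, then — after a generic perturbation making the relevant sections transverse — $\mathcal{C}$ is a compact finite-dimensional smooth manifold whose only boundary is the slice $\mathcal{C}(0)$ lying over $\lambda = 0$; the free $\mathbb{S}^1$-action given by rotations of $\hat{\mathbb{C}}$ fixing $0$ and $\infty$ then passes to the quotient and exhibits $\mathcal{C}(0)$ as a boundary in the free $\mathbb{S}^1$-cobordism category, which is exactly the assertion $[\mathcal{C}(0)] = [\emptyset]$.

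First I would set up the transversality. Since $J$ is regular and $\alpha$ is $\omega$-minimal — so that no bubbling can occur and every curve in class $\alpha$ is simple — the section $f_0 = \bar\partial_J$ is transverse to the zero section, and $\mathcal{C}(0)$, the space of normalised minimal $J$-holomorphic spheres meeting $P_0$ and $P_\infty$, is already a smooth compact manifold. I would then perturb $J$ within the regular $\omega$-compatible almost complex structures (equivalently, perturb the Hamiltonian section $h$) so that the parametrised section $F(\lambda, u) = \bar\partial_J u + \lambda h(u)$ over $[0,\infty) \times \mathcal{B}$ is transverse to the zero section, which is a generic condition by Sard--Smale. Then $\mathcal{C} = F^{-1}(0)$ is a smooth manifold — finite-dimensional, since each $f_\lambda$ is Fredholm of the same index as $\bar\partial_J$ — with boundary $\partial\mathcal{C} = \mathcal{C}(0)$ and interior over $\lambda > 0$; elliptic regularity makes the elements of $\mathcal{C}$ smooth maps and the decay of $\bar h$ near $0$ and $\infty$ keeps the asymptotics under control.

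Next comes the circle action. The group $\mathbb{S}^1$ acts on $\hat{\mathbb{C}}$ by $z \mapsto e^{i\theta}z$, fixing $0$ and $\infty$, and the induced action $u \mapsto u(e^{i\theta}\cdot)$ on $\mathcal{B}$ preserves the homotopy class, the two marked values and the area normalisation, and commutes with both $\bar\partial_J$ and $h$ (this last point being precisely why $\bar h$ was defined in the cylindrical picture); hence $\mathbb{S}^1$ acts on $\mathcal{C}$, fixing $\mathcal{C}(0)$. The action is free on all of $\mathcal{B}$: if $u(e^{i\theta}z) = u(z)$ for a nontrivial rotation, then either $\theta/2\pi$ is irrational and $u$ is constant on circles — forcing $u^*\omega \equiv 0$, impossible since $\langle\omega,\alpha\rangle > 0$ — or $\theta = 2\pi p/q$ with $q > 1$ and $u$ factors through $z \mapsto z^q$, contradicting $\omega$-minimality of $\alpha$. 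Freeness in turn lets the transversality of the previous step be carried out $\mathbb{S}^1$-equivariantly (equivalently, on $\mathcal{B}/\mathbb{S}^1$), so that $\mathcal{C}/\mathbb{S}^1$ is a compact manifold with boundary $\mathcal{C}(0)/\mathbb{S}^1$; this says precisely that $[\mathcal{C}(0)] = [\emptyset]$.

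I expect the main obstacle to be the transversality: achieving it for the whole family $\{f_\lambda\}_{\lambda \ge 0}$ simultaneously while retaining the $\mathbb{S}^1$-equivariance needed to pass to the quotient, and controlling the boundary so that $\lambda = 0$ is its only face — on top of the standard Banach-manifold, Fredholm and exponential-decay machinery of $J$-holomorphic curve theory. In the application the proposition is used contrapositively: by Proposition~\ref{Pro: Compact0slice} the slice $\mathcal{C}_G(0) = \mathcal{H}(\alpha, J_0, P_0, P_\infty)$ is a compact manifold diffeomorphic to $\mathbb{S}^1$ with the free rotation action, so $[\mathcal{C}_G(0)] \ne [\emptyset]$; the same argument run on the $G$-fixed Hilbert manifold $\mathcal{B}_G$ then forces $\mathcal{C}_G$ to be non-compact.
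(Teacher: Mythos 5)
Your proposal follows the same route as the paper, which itself only sketches the Hofer--Viterbo cobordism argument and defers to \cite[proposition 2.7]{hofer1992weinstein}: compactness of $\mathcal{C}$ plus a generic perturbation turns the parametrised zero set into a compact manifold with free $\mathbb{S}^1$-action whose sole boundary face is the slice over $\lambda=0$, whence $[\mathcal{C}(0)]=[\emptyset]$. Your freeness argument (irrational rotation forces $u^{*}\omega\equiv 0$; rational rotation forces a multiple cover, contradicting $\omega$-minimality) and your identification of equivariant transversality as the delicate point both match the intended proof.

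One deviation deserves correction. You propose to achieve transversality of the family $F(\lambda,u)=\bar\partial_J u+\lambda h(u)$ by perturbing $J$ itself (or, ``equivalently'', the Hamiltonian section $h$). Perturbing $J$ changes the operator $\bar\partial_J$ at $\lambda=0$ and hence moves the boundary slice: you would obtain a cobordism whose boundary is the moduli space for the perturbed structure, not the original $\mathcal{C}(0)$, and you would then owe an extra argument that the two are cobordant. Perturbing only $h$ avoids this but is in general too small a class of perturbations to achieve transversality of the whole family. The paper (following Hofer--Viterbo) instead enlarges the section by an abstract anti-complex-linear perturbation $\bar r_\lambda(u)$ drawn from a space of admissible sections $r(\lambda,z,v)$ that are required to vanish for $\lambda$ near $0$ and for $z$ near $0$ and $\infty$, and to be $\mathbb{S}^1$-invariant in $z$. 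The first vanishing condition guarantees that the $\lambda=0$ boundary is \emph{literally} the unperturbed $\mathcal{C}(0)$ (which is already transverse since $J_0$ is regular); the second preserves the point constraints $u(0)=P_0$, $u(\infty)=P_\infty$; and the $\mathbb{S}^1$-invariance is what lets the resulting compact manifold descend to a free $\mathbb{S}^1$-cobordism. You should also make explicit why compactness survives the perturbation: $f_\lambda$ is Fredholm and hence locally proper, so for perturbations small in the relevant Sobolev norm the perturbed zero set stays inside a fixed neighbourhood $\mathcal{W}$ of the compact set $\mathcal{C}$ and remains compact. With these adjustments your outline coincides with the paper's.
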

The sketch of the proof can be described in the following steps:
\begin{enumerate}
\item
Equipe a neighborhood $\mathcal{W}$ of $\mathcal{C}$ with an appropriate fiber metric. 
\item
As typical way of dealing such problems, an appropriate perturbation section space $\mathcal{P}$ is carefully chosen, together with its completion $\mathcal{G}$ under certain Sobolev norm. We consider the perturbated section
\begin{align} 
&\mathcal{G}_{k,\delta}\times  \mathcal{W} \rightarrow \mathcal{E}\notag\\
&(r,\lambda, u) \rightarrow F(r,\lambda, u) = f_{\lambda}(u) + \bar{r}_{\lambda}(u)
\end{align}
here $k\in \mathbb{N}$ measures the regularity of Sobolev norm and $0<\delta$ measures the size of the completed perturbation set in such norm, and   $\bar{r}_{\lambda}(u)(z) := r(\lambda, z, u(z))$ is a perturbation section.
\item
When $k$ is large enough and $\delta$ is small enough, $F$ can be proved to be a Fredholm section.
\item $f_{\lambda}$ is already known to be a Fredholm section and thus locally proper, as a result the set 
\begin{align}
\mathcal{M} = \{(\lambda, u)\in \mathcal{W}| F(r,\lambda,u)=0  \}
\end{align}
can be proved to be a compact manifold.
\item
By replacing $r$ by an element in $\mathcal{P}$ (due to density of $\mathcal{P}$ in $\mathcal{G}$) if necessary, $\mathcal{C}(0)$ could be achieved as the boundary of a compact manifold $\mathcal{M}$.
\end{enumerate}
In our case, we note that 
\begin{Lem}
$f_{\lambda}$ seen as a section of $\mathcal{E}_G \rightarrow \mathbb{R}\times \mathcal{B}_G$ is Fredholm.
\end{Lem}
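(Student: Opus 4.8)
The plan is to reduce the Fredholm property of $f_\lambda$ over the symmetric configuration to the already-known Fredholm property of $f_\lambda$ over the full configuration space $\mathcal{B}$, using the fact (Proposition~\ref{Pro: Palais}) that $\mathcal{B}_G$ is a totally geodesic Hilbert submanifold of $\mathcal{B}$ cut out as the fixed-point set of the isometric group action generated by $g$. First I would recall the standard linear analysis: the linearization $Df_\lambda(u): T_u\mathcal{B} \to \mathcal{E}_u$ at a point $u \in \mathcal{C}(\lambda)$ is a zeroth-order perturbation of $\bar\partial_{J_0}$ by the bounded operator coming from $\lambda h$; since $\bar\partial_{J_0}$ is a Cauchy--Riemann type operator on a compact surface $\hat{\mathbb C}$ with prescribed asymptotics at $0,\infty$ (and the normalization $\int_{\|z\|\le 1}u^*\omega = \langle\omega,\alpha\rangle$ cuts down the reparametrization freedom), this operator is Fredholm as a section of $\mathcal{E} \to \mathbb{R}\times\mathcal{B}$; this is exactly the statement used in \cite{hofer1992weinstein}.

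Next I would exploit the symmetry. The group $G = \langle g\rangle$ is finite cyclic (of order $n$), and acts on $\mathcal{B}$ by isometries (Lemma~\ref{Lem: Sp} and Proposition~\ref{Pro: Palais}); by Lemma~\ref{Lem: Ju_symmetry} and Lemma~\ref{Lem: H_symmetry} the section $f_\lambda$ is $G$-equivariant, i.e. it maps $\mathcal{B}_G$ into $\mathcal{E}_G$ and, restricted over $\mathcal{B}_G$, the fiber $H^{1,2}(\bar u^* X^G_{J_0})$ is precisely the $G$-fixed subspace of the full fiber $H^{1,2}(\bar u^* X_{J_0})$. Since $G$ is finite, averaging over $G$ gives a bounded projection $\Pi = \frac{1}{|G|}\sum_{k=0}^{n-1} g^k$ onto the fixed subspace, both on the tangent spaces $T_u\mathcal{B}$ (for $u\in\mathcal{B}_G$, where $T_u\mathcal{B}_G = \ker(\mathrm{Id}-\Pi)$ by total geodesy) and on the fibers $\mathcal{E}_u$. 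Equivariance of $f_\lambda$ then forces its linearization $Df_\lambda(u)$ to commute with $\Pi$, so it splits as a direct sum of its restriction to the fixed part (which is exactly the linearization of the restricted section $f_\lambda|_{\mathcal{B}_G}$) and its restriction to the complement. The restriction of a Fredholm operator to a closed invariant complemented subspace on which the complement is also invariant is again Fredholm — kernel and cokernel of the restriction are just the corresponding $\Pi$-fixed parts of the kernel and cokernel of the full operator, hence finite-dimensional. This gives the Fredholm property of $Df_\lambda(u)$ on $T_u\mathcal{B}_G$, and since this holds at every zero $u$, $f_\lambda$ is a Fredholm section of $\mathcal{E}_G \to \mathbb{R}\times\mathcal{B}_G$.

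The one point that deserves care — and which I expect to be the main obstacle — is checking that the elliptic/Cauchy--Riemann operator genuinely respects this algebraic splitting at the level of the Sobolev completions, i.e. that the $G$-averaging projector $\Pi$ is bounded on $H^{2,2}$ and $H^{1,2}$ and that the decomposition $T_u\mathcal{B} = T_u\mathcal{B}_G \oplus (T_u\mathcal{B}_G)^\perp$ is preserved by $Df_\lambda(u)$ and not just by its principal symbol. Boundedness of $\Pi$ is automatic because $g$ acts by isometries of the relevant Sobolev structures (it is induced by the smooth isometry $\sigma$ of $\mathbb{CP}^k$ pulled back along the smooth map $\tau$ of $\hat{\mathbb C}$), and total geodesy of $\mathcal{B}_G$ ensures the exponential map identifies a neighborhood in $\mathcal{B}_G$ with the fixed subspace of the tangent space compatibly with the bundle structure, so the splitting is genuine. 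Once this bookkeeping is done, the Fredholm index of the restricted operator is computable as the dimension of the $G$-isotypic (trivial representation) component of the kernel minus that of the cokernel of the full operator; but for the present statement only Fredholmness, not the index, is required, so I would leave the index computation aside. With this lemma in hand, the Hofer--Viterbo machinery (Proposition~\ref{Pro: HoferViterboNoCompact}) can be run verbatim inside $\mathcal{B}_G$, which is what the subsequent non-compactness argument needs.
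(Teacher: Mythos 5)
Your proposal is correct, and for the kernel it coincides with the paper's argument: the paper likewise invokes Palais' principle to identify $T\mathcal{B}_G$ with the space of symmetric variations $\zeta(s,t+\tfrac{2\pi}{n})=d\sigma(\zeta(s,t))$ and observes that the kernel of the restricted linearisation is a subspace of the finite-dimensional kernel of $df_\lambda$ on $\mathbb{R}\times\mathcal{B}$. Where you genuinely diverge is the cokernel. The paper handles it by a short direct remark --- that for non-symmetric $u$ one can find $(\mu,\zeta)$ with $df_\lambda(\mu,\zeta)\notin\mathcal{E}_G$, and from this infers finiteness of the restricted cokernel from that of the full one --- a step whose justification is left largely implicit. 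Your route via the averaging projector $\Pi=\frac{1}{|G|}\sum_{k}g^{k}$ (legitimate because $G$ is finite cyclic and acts by isometries, so $\Pi$ is bounded on the $H^{2,2}$ and $H^{1,2}$ completions) supplies exactly the missing bookkeeping: equivariance of $f_\lambda$ makes $Df_\lambda(u)$ commute with $\Pi$, so the image of the restriction equals $Df_\lambda(T_u\mathcal{B})\cap(\mathcal{E}_G)_u$, is closed, and the restricted cokernel embeds into the full cokernel, while the restricted kernel is the $\Pi$-fixed part of the full kernel. What your approach buys is a fully rigorous and self-contained proof of precisely the point the paper glosses over, at the cost of a little extra linear algebra; both arguments rest on the same external input, namely the Fredholm property of $f_\lambda$ as a section of $\mathcal{E}\to\mathbb{R}\times\mathcal{B}$ from Hofer--Viterbo, and neither needs the index, which you rightly set aside. (One small slip: the machinery to be run afterwards inside $\mathcal{B}_G$ is the symmetric version, Proposition \ref{Pro: HoferViterboNoCompactWithSymmetry}, not Proposition \ref{Pro: HoferViterboNoCompact}.)
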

\begin{proof}
As before let $z = exp(s+it)$ be the cylinder parametrisation. Then by Palais' principle, 
\begin{align}
\mathcal{TB}_G= \{ (u(s,t), \zeta(s,t)) \in\mathcal{TB} \text{ }|
\text{ }\zeta(s, t+\frac{2\pi}{n}) = d\sigma (\zeta(s, t) )  \}
\end{align}
This implies that $Ker(df_{\lambda})$ seen as a linearised operator $\mathbb{R}\times \mathcal{B}_G \xrightarrow{df_{\lambda}} \mathcal{E}_G$ is a subspace of $Ker(df_{\lambda})$ seen as the kernel of the same linearised operator of $\mathbb{R}\times \mathcal{B} \xrightarrow{df_{\lambda}} \mathcal{E}$. Since the latter is of finite dimension, we see that the former is also of finite dimension. To see that the co-kernel is also of finite dimension, note that if $u\in \mathcal{B} \setminus  \mathcal{B}_G$, then $\exists \mu\in \mathbb{R}, \zeta(s,t) \in \mathcal{T}_u \mathcal{B}$, $ df_{\lambda}(\mu, \zeta) \notin \mathcal{E}_G$. As a result, due to that $Coker(df_{\lambda})$ seen as a linearised operator $\mathbb{R}\times \mathcal{B} \xrightarrow{df_{\lambda}} \mathcal{E}$ is of finite dimension, $Coker(df_{\lambda})$ seen as a linearised operator $\mathbb{R}\times \mathcal{B}_G \xrightarrow{df_{\lambda}} \mathcal{E}_G$ is of finite dimension too.
\end{proof}
The admissible perturbation sections that are adaptive to our choreographic symmetric constraint are defined as the following: 
\begin{Def}
Given the projection map:
\begin{align*}
P: [0,\lambda_{\infty}+1] \times \RS \times \CP \rightarrow \RS \times \CP,\quad P(\lambda,z,v) = (z,v)
\end{align*}
and the pull-back bundle $P^{*}X_J^G\rightarrow [0,\lambda_{\infty}+1] \times \RS \times \CP $. Consider vector spaces $\mathcal{A}$ of all smooth section $r(\lambda, z,v)$ of this bundle. 
\begin{align*}
&(1) \quad r(\lambda, z,v) = 0 \text{ if $\lambda$ is close to $0$ or $z$ is close to either $0$ or $\infty$.};\\ 
&(2) \quad r(\lambda, z,v) = r(\lambda, \zeta z,v),\quad  \forall \zeta \in \mathbb{S}^1.
\end{align*}
The admissible perturbation space is defined by $\mathcal{G}_k$, which is the completion of $\mathcal{A}$ in some Sobolev norm $\norm{\cdot}_{W^{k,2}}$ for some $k\in \mathbb{N}$ large enough.
\end{Def}
\begin{Rmk}
Note that then $k$ is large enough $\mathcal{G}_k$ is embedded in to continuous sections, thus the symmetric constraint is well defined for $\mathcal{G}_k$.
\end{Rmk}
Now by following the same lines in \cite{hofer1992weinstein} we can declare the following proposition.
\begin{Pro}
\label{Pro: HoferViterboNoCompactWithSymmetry}
Let $(\CP,\omega)$ be the standard complex projective space and $J_0$ be the regular almost complex structure induced by $i$. Let $P_{0}, P_{\infty}$ be chosen as in lemma \ref{Lem: Chore1} and \ref{Lem: Chore2}, $\alpha$ be the $\omega$-minimal free homotopy class that is invariant under $g$, and $\mathcal{C}_G$ is defined as in \eqref{CGset}. If $\mathcal{C}_G$ is compact, then $[\mathcal{C}_G(0)] = [\emptyset]$.
\end{Pro}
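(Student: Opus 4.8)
The plan is to carry out, in the $G$-equivariant setting, the five-step argument of Hofer--Viterbo \cite[Proposition 2.7]{hofer1992weinstein} summarised above, checking at each step that the choreographic symmetry is preserved. The overall strategy is a cobordism argument: if $\mathcal{C}_G$ were compact, one perturbs the equation $f_\lambda(u)=0$ by an admissible symmetric perturbation $r\in\mathcal{G}_k$ so that the perturbed zero set $\mathcal{M}$ becomes a smooth compact manifold-with-boundary whose boundary is $\mathcal{C}_G(0)$ (the $\lambda=0$ slice being unperturbed since $r$ vanishes near $\lambda=0$); a compact manifold-with-boundary being null-cobordant, one concludes $[\mathcal{C}_G(0)]=[\emptyset]$ as a free $\mathbb{S}^1$-cobordism class, the $\mathbb{S}^1$-action coming from rotation of the domain $\hat{\mathbb{C}}$, which by construction commutes with $g=\sigma\circ\tau^{-1}$ and hence preserves $\mathcal{B}_G$.

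First I would fix a $G$-invariant fiber metric on a neighbourhood $\mathcal{W}$ of $\mathcal{C}_G$ inside $\mathcal{E}_G$: since $\CP$ is K\"ahler and $\sigma$ is a holomorphic isometry by Lemma~\ref{Lem: Sp}, the natural fiber metrics on $\mathcal{E}$ are already $G$-invariant after averaging, and they restrict to $\mathcal{E}_G$. Next, one takes the admissible perturbation space $\mathcal{G}_k$ as defined just above, whose elements satisfy $r(\lambda,z,v)=r(\lambda,\zeta z,v)$ for $\zeta\in\mathbb{S}^1$ and vanish near $\lambda=0$ and near $z\in\{0,\infty\}$; the key point is that this symmetry condition, together with the invariance built into $X_{J_0}^G$ via diagram \eqref{Diag: D2}, guarantees that $\bar r_\lambda(u)(z):=r(\lambda,z,u(z))$ is a section of $\mathcal{E}_G$ over $\mathcal{B}_G$ whenever $u\in\mathcal{B}_G$, so the perturbed map $F(r,\lambda,u)=f_\lambda(u)+\bar r_\lambda(u)$ genuinely lands in $\mathcal{E}_G\to\mathbb{R}\times\mathcal{B}_G$. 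Then, for $k$ large and the perturbation small, $F$ is Fredholm: this follows from the lemma just proved that $f_\lambda$ is Fredholm as a section of $\mathcal{E}_G\to\mathbb{R}\times\mathcal{B}_G$, together with the fact that adding the compact-type perturbation $\bar r_\lambda$ does not change the Fredholm index.

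The heart of the argument is the transversality step: one must show that for generic $r\in\mathcal{G}_k$ the universal zero set is cut out transversally, so that $\mathcal{M}=\{(\lambda,u)\in\mathcal{W}\mid F(r,\lambda,u)=0\}$ is a smooth manifold; combined with the local properness inherited from the Fredholm property of $f_\lambda$ and the assumed compactness of $\mathcal{C}_G$, this makes $\mathcal{M}$ a compact smooth manifold with boundary $\mathcal{C}_G(0)=\mathcal{H}(\alpha,J_0,P_0,P_\infty)$ (using Propositions~\ref{Pro: Compact0slice} and the identification $\mathcal{C}(0)=\mathcal{C}_G(0)$). I expect the transversality step to be the main obstacle: the perturbations in $\mathcal{G}_k$ are constrained to be $\mathbb{S}^1$-invariant in the domain variable, and one must verify that this restricted class is still rich enough to achieve transversality on the $G$-fixed locus — equivalently, that the $G$-equivariant linearised operator is surjective after allowing variations of $r$ within $\mathcal{G}_k$. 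The standard remedy is that the domain-rotation $\mathbb{S}^1$-action is free away from the constant spheres (which are excluded by $\alpha$ being $\omega$-minimal and nonconstant), so one can quotient by $\mathbb{S}^1$ and apply the usual Sard--Smale argument on the quotient; the vanishing of $r$ near $z\in\{0,\infty\}$ is harmless because the asymptotic behaviour near the two poles is governed by the unperturbed equation exactly as in \cite{hofer1992weinstein}. Finally, since $\mathcal{P}$ (the smooth admissible sections) is dense in $\mathcal{G}_k$, one may take the perturbation genuinely smooth, and then $\mathcal{C}_G(0)$ appears as the boundary of the compact $\mathbb{S}^1$-manifold $\mathcal{M}$, whence $[\mathcal{C}_G(0)]=[\emptyset]$ in the free $\mathbb{S}^1$-cobordism group, which is the claim.
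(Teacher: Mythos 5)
Your proposal follows essentially the same route as the paper: the paper's own ``proof'' is simply a citation of \cite[Proposition 2.7]{hofer1992weinstein}, with the adaptation to the symmetric setting carried by the surrounding appendix material (the five-step sketch, the lemma that $f_\lambda$ is Fredholm as a section of $\mathcal{E}_G\to\mathbb{R}\times\mathcal{B}_G$, and the admissible perturbation space valued in $X_{J_0}^G$), all of which you reproduce and in fact elaborate on, notably by flagging and addressing the equivariant transversality step that the paper leaves implicit.
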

\begin{proof}
See \cite[proposition 2.7]{hofer1992weinstein} for details.
\end{proof}
Now proposition \ref{Pro: HoferViterboNoCompactWithSymmetry} and proposition \ref{Pro: Compact0slice} together indicate the non-compactness we are looking for:
\begin{Pro}
$\mathcal{C}_G$ is not compact.
\end{Pro}
\begin{proof}
By proposition \ref{Pro: Compact0slice}, we have seen that if we choose $P_{0}$ and $P_{\infty}$ in such a special way,  then $\mathcal{C}(0) = \mathcal{C}_{G}(0)$. As a result, $[\mathbb{S}^1]=[\mathcal{C}(0)]= [\mathcal{C}_{G}(0)]$. It follows by proposition \ref{Pro: HoferViterboNoCompactWithSymmetry} that $\mathcal{C}_G$ is not compact.
\end{proof}

\section{Some Properties of Relative Equilibria in The N-Vortex Problem}
\label{Appendix: n-vortex relative equilibria}
In this appendix we consider some properties of the relative equilibria in the n-vortex problem with general positive vorticity. 
\subsection{Shub's Lemma for N-Vortex in Bose Einstein Condensation}
We claim that for positive vorticities, the relative equilibria of $H$ cannot accumulated into $\Delta$. This is a version of Shub's lemma \cite{shub1971appendix} from celestial mechanics. The analogues in vortex problems without the harmonic trap are studied by \cite{o1987stationary,roberts2017morse}. The following lemma is proved by using the conservation of centre of vorticity as is in \cite{wang2018relative}.
\begin{Lem}
\label{Lem: ShubBEC}
Suppose that $\Gamma_i>0, 1\leq i\leq n$, and $\textbf{z}$ is a relative equilibrium s.t. $I(\textbf{z}(t)) = \alpha < \min_{1\leq i\leq n}\Gamma_i$. Denote 
\begin{align*}
m(\textbf{z}) = \inf_{ 1\leq i < j\leq n} |z_i-z_j|^2
\end{align*} 
then there exists a constant $\epsilon(\alpha,\mathbf{\Gamma})$ s.t.
\begin{align*}
m(\textbf{z}) > \epsilon
\end{align*}
\end{Lem}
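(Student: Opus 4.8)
The plan is to adapt Shub's lemma from celestial mechanics to the trapped vortex setting, following the strategy of \cite{wang2018relative} but keeping track of the harmonic--trap term $\mu \sum_i \Gamma_i^2 \log\frac{1}{1-|z_i|^2}$. First I would recall that a relative equilibrium is a critical point of $H$ restricted to a level set of $I$ modulo rotation; equivalently, there is an angular velocity $\Omega\in\R$ such that $\mathbf{z}$ is a critical point of $H - \Omega I$ on the configuration space. Writing out the equilibrium equations, the velocity of the $i$-th vortex is $\Gamma_i \dot z_i = \Omega\,\Gamma_i z_i$, i.e. the gradient of $H$ balances the gradient of $\Omega I$ vortex by vortex.

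The key mechanism is a rescaling/blow-up argument near $\Delta$. Suppose for contradiction that there is a sequence of relative equilibria $\mathbf{z}^{(k)}$ with $I(\mathbf{z}^{(k)}) = \alpha < \min_i\Gamma_i$ fixed but $m(\mathbf{z}^{(k)})\to 0$. Since $I$ is bounded and fixed, and $\alpha < \min_i\Gamma_i$ forces each $|z_i|^2 < 1$ uniformly (so the trap term and its gradient stay bounded, and the vortices stay away from the boundary $\partial\mathbb{D}$), I would first argue that $\Omega_k$ stays bounded: this follows by pairing the equilibrium relation with $\mathbf{z}^{(k)}$ itself (using the conservation of the centre of vorticity $\sum_i \Gamma_i z_i$, which is itself controlled), exactly as in the no-trap case, since the trap contributes only bounded terms. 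Then one partitions the indices into clusters according to which mutual distances shrink at which rate, picks a minimal cluster, rescales the configuration so that its diameter becomes $1$, and passes to the limit. In the limit the trap term and the inter-cluster interaction terms become negligible (they scale like lower-order terms), while the equations restricted to the cluster converge to the equations of a \emph{relative equilibrium of the pure logarithmic $n$-vortex problem in the plane} with positive vorticities. But such a planar relative equilibrium has a strictly positive minimal mutual distance once $I$ is normalised --- this is precisely the classical Shub-type statement for the unbounded $n$-vortex problem proved in \cite{o1987stationary, roberts2017morse} (and used in \cite{wang2018relative}) --- which contradicts the fact that, after rescaling, the cluster has a pair of vortices at distance $\to 0$.

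The main obstacle is making the cluster decomposition and the limiting argument clean: one must choose the rescaling so that the rescaled configuration neither degenerates (all within-cluster distances comparable to $1$) nor escapes (the sub-collision structure inside the cluster can itself be hierarchical), and one must verify that the dropped terms --- the trap potential gradients $\mu\Gamma_i^2 \frac{z_i}{1-|z_i|^2}$, the rotation term $\Omega_k \Gamma_i z_i$, and the interaction with vortices outside the cluster --- really are of strictly lower order after rescaling, so that the limiting equations are genuinely those of the free planar problem. Once that is in place, the quantitative bound $m(\mathbf{z})>\epsilon(\alpha,\mathbf{\Gamma})$ follows by a standard compactness argument: the set of relative equilibria with $I=\alpha$ is, modulo the rotation action, contained in a compact subset of the collision-free configuration space, on which the continuous function $m$ attains a positive minimum depending only on $\alpha$ and $\mathbf{\Gamma}$.
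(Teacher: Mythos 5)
Your overall strategy --- contradiction via a sequence of relative equilibria with a collapsing cluster --- matches the paper's, but the route you take through the cluster (rescaling/blow-up plus an appeal to the free-problem Shub lemma) has a gap in its final step. After you rescale a minimal cluster $V$ to diameter $1$, writing $z_i^k=c_k+\epsilon_k w_i^k$, the internal logarithmic forces scale like $\epsilon_k^{-1}$ while the rotation term $\Omega_k\Gamma_i z_i$, the trap gradient and the inter-cluster forces stay bounded (or are $o(\epsilon_k^{-1})$); multiplying the equilibrium equation by $\epsilon_k$ and passing to the limit therefore kills all of these and leaves $\sum_{j\in V,\,j\neq i}\Gamma_i\Gamma_j (w_i^*-w_j^*)/|w_i^*-w_j^*|^2=0$ for every $i\in V$. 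The limit object is thus a \emph{stationary} configuration of the free logarithmic problem, not a relative equilibrium with normalised moment of inertia, and your stated contradiction (``after rescaling, the cluster has a pair of vortices at distance $\to 0$'') cannot occur for a minimal cluster, whose rescaled mutual distances are bounded below by construction. The contradiction you actually need is O'Neil's: pairing the limiting equation with $w_i^*$ and summing over $i\in V$ gives $\sum_{i<j\in V}\Gamma_i\Gamma_j=0$, impossible for positive vorticities. With that substitution (and the hierarchical cluster selection you already flag) your argument closes; as written, quoting ``Shub's lemma for relative equilibria with normalised $I$'' as a black box does not, because the blow-up limit is not an object to which that statement applies.

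The paper avoids rescaling altogether. It uses that the centre of vorticity $c_V^k$ of the colliding cluster rotates uniformly with the configuration (the internal forces cancel by antisymmetry), so $\dot c_V^k$ converges; comparing with $\dot z_i^k$ for $i\in V$ shows that each internal force $\nabla_{z_i}H_V(\mathbf{z}^k)$ converges to an explicit multiple of $q=-\mu z^*/(1-|z^*|^2)$, with coefficients $\Gamma_i\bigl(\Gamma_i-\sum_{V}\Gamma_j^2/\sum_{V}\Gamma_j\bigr)$ summing to zero over $V$. Pairing with $z_i^k$ and using the Euler identity $\sum_{i\in V}\nabla_{z_i}H_V\cdot z_i=-\lambda L_V$ then yields $L_V=0$ directly. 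Both proofs ultimately run on the same engine, namely $\sum_{i<j\in V}\Gamma_i\Gamma_j>0$; the paper's version reaches it without a blow-up and without invoking the free-problem result. Your preliminary observations --- that $\alpha<\min_i\Gamma_i$ keeps the vortices uniformly inside the disc, and that the angular velocity stays bounded --- are correct and are also (implicitly) needed in the paper's argument.
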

\begin{proof}
Suppose to the contrary that $\mathbf{z}^k$ is a sequence of relative equilibria of the n-vortex problem in BEC s.t. $I(\mathbf{z}^{m}(t)) = \alpha$ and $\lim_{k\rightarrow \infty}m(\textbf{z}^k) = 0$. Then by consecutively passing to subsequence if necessary, we may suppose that there exists an sub-index set $V\subset\{1,2,..,n\}$ s.t. $z^k_i \rightarrow z^*, \forall i\in V$.  Denote $\textbf{z}_V$ as the vector of vortices with index in V. As before let $\displaystyle L = \sum_{1\leq i< j \leq n} \Gamma_i \Gamma_j$ and define moreover
$\displaystyle L_V = \sum_{\substack{i< j \\ i,j\in V}} \Gamma_i \Gamma_j$. \\
First, we show that $z^*$ cannot be an interior point inside the unit circle. Actually, observe that $c^k_V= \frac{\sum_{i\in V}\Gamma_i z^k_i}{\sum_{i\in V}\Gamma_i }$, the vorticity centre of $\textbf{z}^k_V$, also follows a uniform rotation with the vortices. Denote the angular speed to be $\nu$, then 
\begin{align}
\dot{c}^k_V &= \frac{\sum_{i\in V}\Gamma_i\dot{z}^k_i}{\sum_{i\in V} \Gamma_i} =\mathbb{J} \frac{\nu}{2} c^k_V \rightarrow \mathbb{J} \frac{\nu}{2} z^*\\
\Gamma_i\dot{z}_i &= \mathbb{J}(\nabla_{z_i} H_V(\textbf{z}) +  \nabla_{z_i} H_{V^c}(\textbf{z}) ) = \mathbb{J} \Gamma_i \frac{\nu}{2} z_i^k \rightarrow \mathbb{J} \Gamma_i \frac{\nu}{2} z^*,\quad i\in V
\end{align}
Define the vector $\displaystyle p = \lambda \sum_{j\in V^c}  \Gamma_j \frac{z^*-z_j}{\norm{z^*-z_j}^2}, q = -\mu \frac{z^*}{1-|z^*|^2}$. Thus we have 
\begin{align*}
\dot{c}^k_V& \rightarrow \frac{\sum_{i\in V}\Gamma_i^2}{\sum_{i\in V}\Gamma_i}q + p\\
\dot{z}_i^k& \rightarrow  \Gamma_i q + p + \nabla_i H_V(\textbf{z}^k)
\end{align*}
As $\dot{c}^k_V - \dot{z}_i^k \rightarrow 0$, it turns out that
\begin{align*}
\Gamma_i (\Gamma_i - \frac{\sum_{i\in V}\Gamma_i^2}{\sum_{i\in V}\Gamma_i}) q \sim\nabla_i H_V(z_k) 
\end{align*}
Hence 
\begin{align*}
-\lambda L_V = \nabla H_V(z_k) z^k \rightarrow -\mu \frac{|z^*|^2}{1-|z^*|^2}
\sum_{i\in V}\Gamma_i (\Gamma_i - \frac{\sum_{i\in V}\Gamma_i^2}{\sum_{i\in V}\Gamma_i}) =0
\end{align*}
This is impossible. As a result, $z^*$ must be a point on the boundary if it exists. But then this is also impossible because $I<\min_{1\leq i\leq n}\Gamma_i$ implies that $|z_i|^2< 1, \forall 1\leq i \leq n$. The lemma is thus proved.
\end{proof}
The above lemma implies that for positive BEC n-vortex system all the relative equilibria are bounded away uniformly from the generalized diagonal set $\Delta$. Equivalently, it means that the fixed points of the system \eqref{System: H2} on $\CP$ cannot accumulate in to $\tilde{\Delta}$, the projection of $\Delta$ on $\mathbb{CP}^{n-1}$. 
\subsection{On the Connectivity of Energy Surface of the N-Vortex Problem}
\paragraph{}In this sub-section, we show that the reduced energy surface of the n-vortex problem, both for Euler equation and Gross–Pitaevskii equation has connected surfaces that are symmetric with respect to the choreographic symmetry. \\
\paragraph{}Consider the following problem: given $n$ points $\mathbf{A} = (A_1,A_2,...,A_n)$ on the unit circle, none of them overlaps, i.e., $A_i \neq A_j,\forall 1\leq i< j\leq n$. Denotes $l_{ij} = \norm{A_iA_j}$ to be length of the segment between $A_i$ and $A_j$. We would like to consider 
\begin{align}
F(\mathbf{A}) = \prod_{1\leq i < j \leq n} \log l_{ij}
\end{align}

\begin{Lem}
\label{Lem: Npolymaximum}
$F(\mathbf{A})$ achieves its maximum when $\mathbf{A}=(A_1,A_2,...A_n)$ form a n-polygon inscribed to the unit circle.
\end{Lem}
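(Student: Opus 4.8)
The plan is to sidestep any direct variational analysis and reduce the statement to the classical extremal property of the roots of unity through the Vandermonde determinant. First, rescaling the circle multiplies every chord length $l_{ij}=|A_i-A_j|$ by a common factor and hence only shifts $F$ by an additive constant, so I may assume the circle has radius $1$ and write $A_k=e^{i\theta_k}$. The content of the lemma is then that the logarithmic energy $\sum_{1\le i<j\le n}\log l_{ij}$ --- equivalently the total pairwise-distance product $\Pi(\mathbf{A}):=\prod_{1\le i<j\le n}l_{ij}$, since $\log$ is increasing --- is maximal when the $A_k$ are the vertices of a regular $n$-gon. Since $F\to-\infty$ whenever two of the points collide, a maximiser exists, and it is enough to produce a sharp upper bound for $\Pi(\mathbf{A})$ together with its equality case.

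Second, I would use the Vandermonde identity $\Pi(\mathbf{A})^2=|\det V|^2=\det(V^{*}V)$, where $V=(A_i^{\,j-1})_{1\le i,j\le n}$. Since $|A_k|=1$, the entry $(V^{*}V)_{ab}=\sum_{c=1}^{n}\overline{A_c^{\,a-1}}\,A_c^{\,b-1}=\sum_{c=1}^{n}A_c^{\,b-a}=:p_{b-a}$ depends only on $b-a$, so $V^{*}V$ is a positive semi-definite Hermitian Toeplitz matrix with constant diagonal $p_0=n$. Hadamard's inequality then yields
\[
\Pi(\mathbf{A})^2=\det(V^{*}V)\le\prod_{a=1}^{n}(V^{*}V)_{aa}=n^{n},
\]
with equality precisely when $V^{*}V$ is diagonal, i.e.\ when the power sums vanish: $p_1=p_2=\cdots=p_{n-1}=0$.

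Third, I would read off the equality case. Feeding $p_1=\cdots=p_{n-1}=0$ into Newton's identities forces the elementary symmetric functions $e_1(\mathbf{A})=\cdots=e_{n-1}(\mathbf{A})=0$, so $\prod_{k=1}^{n}(z-A_k)=z^{n}+(-1)^{n}e_n(\mathbf{A})$ and hence $A_k^{\,n}=(-1)^{n+1}e_n(\mathbf{A})$ is the same number for every $k$, necessarily of modulus $1$ because $|A_k|=1$. The $A_k$ are therefore the $n$ distinct $n$-th roots of that unimodular number, i.e.\ an inscribed regular $n$-gon; conversely such a configuration attains $\Pi=n^{n/2}$, so the bound is sharp and realised exactly at the regular $n$-gons, which is the assertion.

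The only genuinely delicate points are in the equality analysis: one must notice that it is exactly the constraint $|A_k|=1$ that makes $V^{*}V$ Toeplitz with constant diagonal, so that Hadamard is tight only in the ``all power sums vanish'' case, and then run Newton's identities to convert vanishing power sums into a polynomial with only its top and bottom coefficients. I regard this packaging as the main payoff: the naive alternative --- writing the Lagrange conditions $\sum_{j\ne k}\cot\frac{\theta_k-\theta_j}{2}=0$ for all $k$ and classifying their solutions --- would force one to prove that the regular $n$-gon is the \emph{unique} critical configuration up to rotation, which is the real obstacle, and which the Vandermonde/Hadamard route avoids entirely.
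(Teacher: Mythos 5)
Your proof is correct, and it takes a genuinely different route from the paper's. (Both you and the paper silently read the displayed $F(\mathbf{A})=\prod_{i<j}\log l_{ij}$ as $\sum_{i<j}\log l_{ij}=\log\prod_{i<j}l_{ij}$, which is clearly what is intended, since the paper's own computation manipulates it additively.) The paper's argument is trigonometric: writing $\theta_i$ for the consecutive angular gaps, it expresses each chord as $l_{i,i+k}=2\sin\bigl(\tfrac{\theta_i+\dots+\theta_{i+k-1}}{2}\bigr)$, groups the chords by the cyclic index difference $k$, and applies Jensen's inequality to the concave function $\log\sin$ on $(0,\pi)$ within each group; equality in all these applications of Jensen forces $\theta_1=\dots=\theta_n=2\pi/n$. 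Your Vandermonde--Hadamard--Newton argument replaces this bookkeeping by linear algebra and buys several things: the explicit maximal value $\prod_{i<j}l_{ij}=n^{n/2}$; a uniqueness statement (regular $n$-gons up to rotation are the only maximisers) obtained without tracking the equality case of Jensen in each class; and immunity to a small counting subtlety in the paper's grouping (for even $n$ the diametral class $k=n/2$ is traversed twice as $i$ runs over $1,\dots,n$). Your opening existence remark is actually superfluous, since the Hadamard bound is unconditional and attained. The one hypothesis worth flagging explicitly is that the $A_k$ are distinct, so that $V$ is invertible, $V^{*}V$ is positive definite with diagonal entries $n>0$, and equality in Hadamard is equivalent to $V^{*}V$ being diagonal; the lemma's standing assumption $A_i\neq A_j$ supplies exactly this.
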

\begin{proof}
Without loss of generality, we can assume that the index $j$ of $A_j$ increases along the clockwise direction. We can then denote by  $\theta_{j}$ the angle between $OA_j$ and $OA_{j+1}, \forall 1\leq j\leq n-1$ while $\theta_{n}$ is the angle between $OA_{n}$ and $OA_1$ (see figure \ref{Fig:4VortexBEC}). Now by the sine formule of chord length, we have that 
\begin{align}
l_{ij} = 2\sin{\frac{\theta_i+\theta_{i+1}+...+ \theta_{j-1}}{2}}, j>i
\end{align}
Note that if $j>n$, $A_j$ is to be considered as $A_{j'}$, where $j' = j \mod n$. In this way, we regroup the items in the product $F(\mathbf{A})$, such that in each subset the the difference $j-i$ is fixed. i.e., denote 
\begin{align}
F(\mathbf{A}) = \prod_{1\leq i < j \leq n} \log l_{ij} = \prod_{1\leq k  \leq [\frac{n}{2}]}B_k, \quad B_k = \prod_{\substack{1\leq i\leq n \\ j-i = k}  } \log l_{ij}
\end{align}
One verifies explicitly that $f(\theta) = \sin{\theta}, 0< \theta < \pi$ is concave, hence 
\begin{align}
B_k &= \prod_{\substack{1\leq i\leq n \\ j-i = k}} \log l_{ij}
= n\log 2+ \prod_{1\leq i\leq n}\log \sin{\frac{\theta_i+\theta_{i+1}+...+ \theta_{i+k-1}}{2}}\notag\\
&\leq  n\log 2+ n \log \sin{\sum_{1\leq i\leq n}\frac{\theta_i+\theta_{i+1}+...+ \theta_{i+k-1}}{2n}} \tag{Jensen's Inequality}\\
&= n\log 2 + n\log \sin( k\sum_{1\leq i\leq n}\frac{\theta_i}{2n})\notag\\
&= n\log 2 + n\log \sin( \frac{k\pi}{n})
\end{align}
As a result 
\begin{align}
\label{Eq: Maximaum}
F(\mathbf{A}) \leq  \prod_{1\leq k  \leq [\frac{n}{2}]}(n\log 2 + n\log \sin( \frac{k\pi}{n}))
\end{align}
Since $k\in \mathbb{N}$ and $0<k< [\frac{n}{2}]$, we see that the inequality in \eqref{Eq: Maximaum} becomes the equality if and only if
\begin{align}
\theta_1 = \theta_2 = ... =\theta_n =\frac{2\pi}{n}
\end{align}
In other words, $F(\mathbf{A})$ achieves its maximum when $\mathbf{A}=(A_1,A_2,...A_n)$ form a n-polygon inscribed to the unit circle.
\end{proof}

\begin{figure}
\begin{center}
\includegraphics[width=40mm,scale=0.5]{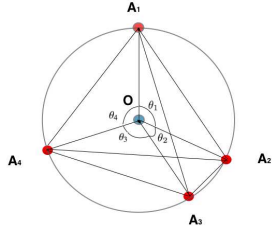}
\caption{4-vortex problem in BEC restricted to $\mathcal{M}_{\rho}$}
\label{Fig:4VortexBEC}
\end{center}
\end{figure}

Now we show the following simple yet useful lemma on the existence of a $\sigma$-invariant component of the energy surface $S_c = H^{-1}(c)$:
\begin{Lem}
\label{Lem: SymComp}
Let $S_c \subset \mathbb{CP}^{k}$ be the energy surface. Suppose that there is a connected subset $U\subset S_c$ s.t. $U$ is $\sigma$-invariant, i.e., $\sigma U = U$. If $H$ is $\sigma$-invariant, then $U$ is contained in a $\sigma$-invariant component of $S_c$.
\end{Lem}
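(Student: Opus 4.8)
The plan is to take as the desired $\sigma$-invariant component of $S_c$ simply the connected component $V$ of $S_c$ containing $U$, and to verify that $V$ itself is $\sigma$-invariant.

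First I would record that $\sigma$ restricts to a self-homeomorphism of $S_c$. By Lemma~\ref{Lem: Sp} the map $\sigma: \CP \to \CP$ is holomorphic and symplectic, hence in particular a homeomorphism of $\CP$; and the $\sigma$-invariance of $H$ gives $H(\sigma z) = H(z)$ for every $z$, so $\sigma(S_c) \subseteq S_c$, while applying the same to $\sigma^{-1}$ (which is also $H$-invariant) yields $\sigma(S_c) = S_c$. Therefore $\sigma|_{S_c}$ is a homeomorphism of $S_c$ onto itself, and in particular it permutes the connected components of $S_c$.

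Next, since $U$ is connected, it lies in a unique connected component $V$ of $S_c$. Because $\sigma|_{S_c}$ is a homeomorphism, $\sigma(V)$ is again a connected component of $S_c$. Now $\sigma(U) = U$ by hypothesis, so $U = \sigma(U) \subseteq \sigma(V)$, and also $U \subseteq V$; hence $\sigma(V) \cap V \supseteq U \neq \emptyset$. Since two connected components of a space are either disjoint or equal, $\sigma(V) = V$, so $V$ is a $\sigma$-invariant connected component of $S_c$ containing $U$, as claimed. There is essentially no obstacle here: the argument is pure point-set topology, relying only on the facts that a homeomorphism permutes connected components and that distinct components are disjoint; the one implicit assumption is that $U$ is non-empty, without which the statement is vacuous.
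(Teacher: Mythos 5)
Your proof is correct, and it follows the same overall strategy as the paper (take $V$ to be the connected component of $S_c$ containing $U$ and show $V$ is $\sigma$-invariant), but the key verification is carried out differently. The paper argues pointwise: given $\mathbf{z}$ in the component, it picks a path $f$ from some $\mathbf{u}\in U$ to $\mathbf{z}$, pushes it forward to $g=\sigma\circ f$, and uses $H\circ\sigma=H$ to check that $g$ stays in $S_c$, concluding that $\sigma\mathbf{z}$ is connected to $\sigma\mathbf{u}=\mathbf{u}'\in U$. This implicitly assumes that the component of $S_c$ is path-connected (the path must lie in $S_c$ for the computation $H(g(t))=c$ to make sense), which is harmless here when $c$ is a regular value, since $S_c$ is then a manifold, but is not automatic for an arbitrary level set. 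Your argument sidesteps this entirely: you only use that $\sigma$ restricts to a self-homeomorphism of $S_c$ (which does require the observation, which you make, that $\sigma^{-1}$ also preserves $H$, so that $\sigma(S_c)=S_c$ and not merely $\sigma(S_c)\subseteq S_c$), that a homeomorphism permutes connected components, and that distinct components are disjoint. This is slightly more general and avoids the path-connectedness issue; the paper's version has the minor virtue of being explicit about where the hypothesis $H\circ\sigma=H$ enters along the deformation. Both proofs need $U\neq\emptyset$, as you correctly note.
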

\begin{proof}
Since $U$ is connected, $U$ is included in a component $S_c^{\sigma}$ of $S_c$. We only need to show this component itself is $\sigma$-invariant. To this end, let $\mathbf{z} \in S_c^{\sigma}$. Then $\exists \mathbf{u} \in U$ and a continuous function $f: [0,1]\rightarrow \CP$ s.t. $f(0)= \mathbf{u},f(1)=\mathbf{z}$. As a result, let $g:[0,1]\rightarrow \CP$ defined by $g(t)= \sigma f(t)$. Clearly $g$ is a continuous function satisfies that $g(0)= \sigma\mathbf{u},g(1)=\sigma\mathbf{z}$, and $\forall t\in [0,1]$, we have $H(g(t)) = H(\sigma f(t)) = H(f(t))= c$, hence $g(t) \in S_c, \forall t\in[0,1]$. We have thus shown that $\sigma\mathbf{u}$ and $\sigma\mathbf{z}$ are connected, hence $\sigma\mathbf{z}\in S_c^{\sigma}$ too. We conclude that $S_c^{\sigma}$ is the $\sigma$-invariant component and the lemma is proved.
\end{proof}

We state another useful criteria for showing that there is a symmetric component on some prescribed energy level of the reduced Hamiltonian.\\
\begin{Pro}
\label{Pro: Smallconnectbig}
Suppose that the n-polygon configuration B is a non-degenerate maximum of $H$ restricted to the manifold 
\begin{align}
\label{Def: UnitCircle}
\mathcal{M}_{\rho} = \{|z_1|^2= |z_2|^2=\dotsm =|z_n|^2 = \rho\}, \rho>0
\end{align}
Let $H(B) >c > H(B)- \epsilon$ for small $\epsilon>0$, then $S_c$ has a $\sigma$-invariant component. 
\end{Pro}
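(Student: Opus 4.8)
The plan is to build, for each $c$ slightly below $H(B)$, a small $\sigma$-invariant connected sphere lying inside the energy surface $S_c = H^{-1}(c)$, and then to invoke Lemma~\ref{Lem: SymComp}. The only inputs are the Morse lemma at the nondegenerate maximum $B$ of $H|_{\mathcal{M}_\rho}$ and the equivariance of the whole picture, so the argument is essentially bookkeeping once these are set up.

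First I would record the symmetries. The permutation $\sigma$ is induced by the coordinate permutation $\tilde\sigma$, which preserves each modulus $|z_i|$; hence $\sigma(\mathcal{M}_\rho)=\mathcal{M}_\rho$. Moreover $\sigma(B)=B$ (the $n$-polygon configuration is $\sigma$-invariant, as noted after its definition) and $H\circ\sigma=H$ by hypothesis. Since the cyclic group $\langle\sigma\rangle$ is finite, I may fix a $\sigma$-invariant open neighbourhood $\mathcal{U}$ of $B$ in $\mathcal{M}_\rho$ — take any Morse-chart neighbourhood and intersect it with its finitely many $\sigma$-translates — on which the Morse lemma for $H|_{\mathcal{M}_\rho}$ at the nondegenerate maximum $B$ applies: in suitable coordinates $x$ centred at $B$ one has $H|_{\mathcal{M}_\rho}=H(B)-\norm{x}^2$.

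Next, choose $\epsilon>0$ small enough (depending only on the fixed Morse chart) that for every $c$ with $H(B)-\epsilon<c<H(B)$ the Euclidean sphere of radius $\sqrt{H(B)-c}$ is contained in the chart, and set
\begin{align*}
\Sigma_c := \{\, p\in\mathcal{U} \mid H(p)=c \,\}.
\end{align*}
In the Morse coordinates $\Sigma_c=\{\norm{x}^2=H(B)-c\}$ is a round sphere of dimension $\dim\mathcal{M}_\rho-1$, hence connected as soon as $\dim\mathcal{M}_\rho\geq 2$, which holds in the range of $n$ relevant to our applications. By construction $\Sigma_c\subset S_c$, and $\Sigma_c$ is $\sigma$-invariant, since $\sigma$ preserves $\mathcal{U}$, $\mathcal{M}_\rho$ and the level set $\{H=c\}$, hence their intersection.

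Finally, $\Sigma_c$ is a connected $\sigma$-invariant subset of $S_c$ and $H$ is $\sigma$-invariant, so Lemma~\ref{Lem: SymComp} shows that $\Sigma_c$ is contained in a $\sigma$-invariant connected component of $S_c$; in particular $S_c$ has a $\sigma$-invariant component, which is the assertion. The one point that needs care is the uniformity of $\epsilon$: it must be chosen after, and in terms of, the Morse chart around $B$, so that a single normal form controls every $\Sigma_c$ as $c\to H(B)^{-}$; this is routine once the chart is fixed. I do not expect any deeper obstacle, since the substantive input — nondegenerate maximality of the polygon configuration on $\mathcal{M}_\rho$ — is assumed here, the fact that the polygon is the maximizer being the content of Lemma~\ref{Lem: Npolymaximum}.
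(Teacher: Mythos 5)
Your argument is correct and takes essentially the same route as the paper: the paper likewise notes that, $B$ being a nondegenerate maximum of $H|_{\mathcal{M}_\rho}$, the nearby level sets $H^{-1}(c)\cap\mathcal{M}_\rho$ contain a connected $\sigma$-invariant piece (a small sphere), and then invokes Lemma~\ref{Lem: SymComp}. Your version simply makes the Morse-chart bookkeeping and the equivariance of the neighbourhood explicit, which the paper leaves implicit.
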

\begin{proof}
We see that by lemma \ref{Lem: Npolymaximum}, the n-polygon configuration is a maximum for $H(z)|_{\mathcal{M}{\rho}}$. 
Since it is a non-degenerate critical point, there is no other critical point nearby. As a result, the set $M_c = H^{-1}_{\mathcal{M}_{\rho}} (c)=  H^{-1} (c)\cap \mathcal{M}_{\rho}$ has a connected component, denoted as $M_c^{\sigma}$ that is $\sigma$-invariant. It is then included in a $\sigma$-invariant component $S_c^{\sigma}$, due to lemma \ref{Lem: SymComp}.
\end{proof}


\end{document}